\numberwithin{equation}{section}
\theoremstyle{plain}
\newtheorem{lemma}{Lemma}[subsection]
\newtheorem{proposition}[lemma]{Proposition}
\newtheorem{theorem}[lemma]{Theorem}
\theoremstyle{definition}
\newtheorem{definition}[lemma]{Definition}
\newtheorem{remark}[lemma]{Remark}
\newtheorem{example}[lemma]{Example}
\begin{document}
\newcommand{\R}{{\mathbb R}}
\newcommand{\C}{{\mathbb C}}
\newcommand{\F}{{\mathbb F}}
\renewcommand{\O}{{\mathbb O}}
\newcommand{\Z}{{\mathbb Z}} 
\newcommand{\N}{{\mathbb N}}
\newcommand{\Q}{{\mathbb Q}}
\renewcommand{\H}{{\mathbb H}}

\newcommand{\Aa}{{\mathcal A}}
\newcommand{\Bb}{{\mathcal B}}
\newcommand{\Cc}{{\mathcal C}}    
\newcommand{\Dd}{{\mathcal D}}
\newcommand{\Ee}{{\mathcal E}}
\newcommand{\Ff}{{\mathcal F}}
\newcommand{\Gg}{{\mathcal G}}    
\newcommand{\Hh}{{\mathcal H}}
\newcommand{\Kk}{{\mathcal K}}
\newcommand{\Ii}{{\mathcal I}}
\newcommand{\Jj}{{\mathcal J}}
\newcommand{\Ll}{{\mathcal L}}    
\newcommand{\Mm}{{\mathcal M}}    
\newcommand{\Nn}{{\mathcal N}}
\newcommand{\Oo}{{\mathcal O}}
\newcommand{\Pp}{{\mathcal P}}
\newcommand{\Qq}{{\mathcal Q}}
\newcommand{\Rr}{{\mathcal R}}
\newcommand{\Ss}{{\mathcal S}}
\newcommand{\Tt}{{\mathcal T}}
\newcommand{\Uu}{{\mathcal U}}
\newcommand{\Vv}{{\mathcal V}}
\newcommand{\Ww}{{\mathcal W}}
\newcommand{\Xx}{{\mathcal X}}
\newcommand{\Yy}{{\mathcal Y}}
\newcommand{\Zz}{{\mathcal Z}}

\newcommand{\zt}{{\tilde z}}
\newcommand{\xt}{{\tilde x}}
\newcommand{\Ht}{\widetilde{H}}
\newcommand{\ut}{{\tilde u}}
\newcommand{\Mt}{{\widetilde M}}
\newcommand{\Llt}{{\widetilde{\mathcal L}}}
\newcommand{\yt}{{\tilde y}}
\newcommand{\vt}{{\tilde v}}
\newcommand{\Ppt}{{\widetilde{\mathcal P}}}
\newcommand{\bp }{{\bar \partial}} 

\newcommand{\Remark}{{\it Remark}}
\newcommand{\Proof}{{\it Proof}}
\newcommand{\ad}{{\rm ad}}
\newcommand{\Om}{{\Omega}}
\newcommand{\om}{{\omega}}
\newcommand{\eps}{{\varepsilon}}
\newcommand{\Di}{{\rm Diff}}
\newcommand{\Pro}[1]{\noindent {\bf Proposition #1}}
\newcommand{\Thm}[1]{\noindent {\bf Theorem #1}}
\newcommand{\Lem}[1]{\noindent {\bf Lemma #1 }}
\newcommand{\An}[1]{\noindent {\bf Anmerkung #1}}
\newcommand{\Kor}[1]{\noindent {\bf Korollar #1}}
\newcommand{\Satz}[1]{\noindent {\bf Satz #1}}

\renewcommand{\a}{{\mathfrak a}}
\renewcommand{\b}{{\mathfrak b}}
\newcommand{\e}{{\mathfrak e}}
\renewcommand{\k}{{\mathfrak k}}
\newcommand{\pg}{{\mathfrak p}}
\newcommand{\g}{{\mathfrak g}}
\newcommand{\gl}{{\mathfrak gl}}
\newcommand{\h}{{\mathfrak h}}
\renewcommand{\l}{{\mathfrak l}}
\newcommand{\sm}{{\mathfrak m}}
\newcommand{\n}{{\mathfrak n}}
\newcommand{\s}{{\mathfrak s}}
\renewcommand{\o}{{\mathfrak o}}
\newcommand{\so}{{\mathfrak so}}
\renewcommand{\u}{{\mathfrak u}}
\newcommand{\su}{{\mathfrak su}}
\newcommand{\ssl}{{\mathfrak sl}}
\newcommand{\ssp}{{\mathfrak sp}}
\renewcommand{\t}{{\mathfrak t }}
\newcommand{\Cinf}{C^{\infty}}
\newcommand{\la}{\langle}
\newcommand{\ra}{\rangle}
\newcommand{\half}{\scriptstyle\frac{1}{2}}
\newcommand{\p}{{\partial}}
\newcommand{\notsub}{\not\subset}
\newcommand{\iI}{{I}}               
\newcommand{\bI}{{\partial I}}      
\newcommand{\LRA}{\Longrightarrow}
\newcommand{\LLA}{\Longleftarrow}
\newcommand{\lra}{\longrightarrow}
\newcommand{\LLR}{\Longleftrightarrow}
\newcommand{\lla}{\longleftarrow}
\newcommand{\INTO}{\hookrightarrow}

\newcommand{\QED}{\hfill$\Box$\medskip}
\newcommand{\UuU}{\Upsilon _{\delta}(H_0) \times \Uu _{\delta} (J_0)}
\newcommand{\bm}{\boldmath}

\title[Homogeneous spaces with invariant $G_2$-structures]{\large Classification of
compact homogeneous spaces  with invariant $G_2$-structures}
\author{H\^ong V\^an L\^e  and Mobeen Munir  } 
\thanks{H.V.L. is
partially supported by grant of ASCR Nr IAA100190701 and M.M.
is partially supported by  HEC of Pakistan}
\date{}

\maketitle

\medskip

\abstract In this note we classify all homogeneous spaces $G/H$ admitting a $G$-invariant $G_2$-structure, assuming that  $G$ is a connected compact Lie group and $G$ acts effectively on $G/H$. They include a subclass   of all homogeneous spaces $G/H$ with a $G$-invariant $\tilde G_2$-structure, where $G$ is a compact  Lie group. There are many  new  examples  with  nontrivial fundamental group. We study a subclass of homogeneous spaces  of high rigidity  and low rigidity and  show that  they  admit  families of invariant coclosed $G_2$-structures (resp. $\tilde G_2$-structures). 
\endabstract
\tableofcontents

{\it MSC: 57M50,  57M60}\\
{\it Keywords: compact homogeneous space, $G_2$-structure}

\section{ Introduction}

In recent years  manifolds admitting a $G_2$-structure  have attracted  increasing  interests of physicists and mathematicians.
These manifolds  can be  geometric models in the  theory of superstrings with torsion \cite{GMW2004}. In   another  field,  
a recent work of Donaldson and Segal \cite{DS2009} suggests that  a right framework for  a gauge theory in dimension 7  is a class of  manifolds  with non-vanishing torsion $G_2$-structure.
A main source of computable models of manifolds with $G_2$-structures are   homogeneous spaces  or spaces of co-homogeneity one \cite{II2005}, \cite{CF2006}, \cite{CS2002}.   

In this note  we classify all  compact homogeneous spaces  $M ^7$ of the form
$G/H$ such that $G$ is a  connected compact  Lie group  acting effectively on $G/H$, admitting a $G$-invariant  structure of $G_2$-type or
of the non-compact  form $ \tilde G_2$-type.  This classification extends the classification
by Friedrich-Kath-Moroianu-Semmelmann of all simply-connected compact homogeneous nearly parallel $G_2$-manifolds
in \cite{FKMS1997}. We   study    manifolds with $\tilde G_2$-structure, not only because
of their striking similarity with those  admitting a $G_2$-structure, but  they present an interesting class  in pseudo Riemannian
geometry \cite{Kath1998}. We also like to point out  that even the classification of symmmetric spaces  with holonomy contained in $\tilde G_2$ is open.

Recall that a 7-dimensional  smooth  manifold $M ^7$  is said to admit {\it a $G_2$-structure} (resp. {\it a $\tilde G_2$-structure}), if there is a section   of the
bundle $\Ff (M^7 )/G_2 $ (resp. $\Ff (M ^7)/\tilde G_2)$ over $M ^7$, where $\Ff (M ^7)$ is the frame bundle over $M^7$. 
It is well-known that
$G_2$ (resp. $\tilde G_2$)  is the automorphism group of a 3-form $\phi$ (resp. $\tilde \phi$)  on $\R ^7$, \cite{Reichel1907}, \cite[p. 114]{HL1982}, or \cite[p. 539]{Bryant1987}. Such  a 3-form $\phi$ (resp. $\tilde \phi$) is called  a {\it 3-form of $G_2$-type} (resp. {\it $\tilde G_2$-type}).
It is known that  the $GL(\R ^7)$-orbits of $\phi$ and
$\tilde \phi$ are the only open  orbits  of the $GL(\R ^7)$-action on $\Lambda  ^3  (\R ^7 ) ^*$, see e.g. 
\cite{Bryant1987}, \cite{Hitchin2000}, \cite{LPV2008}. Any 3-form on these open orbits is called {\it a stable 3-form}, \cite{Hitchin2000}, or
{\it a definite 3-form}, if it lies in the  orbit of $\phi$,  or an   {\it indefinite  3-form}, if it lies in the  orbit of $\tilde \phi$.
The existence of a $G_2$-structure  (resp. $\tilde G_2$-structure) on  a  manifold $M ^7$ is equivalent to the existence
of a definite  differential 3-form  $\phi$ (resp. indefinite  differential 3-form $\tilde \phi$)  on $M ^7$. 

The plan of our note is as follows.  In section 2 we collect important properties of the groups $\tilde G_2$ and $G_2$, which are needed for our classification.
In section 3 we classify   homogeneous manifolds   $G/H$ admitting invariant $\tilde G_2$-structures, where $G$ is a connected compact Lie group
and $H$ is a  closed Lie subgroup (not necessary connected) of $G$, see Theorem \ref{Theorem.2.3.1}.  This problem is equivalent to finding  all pairs $(G, H)$ where $H$ is a  closed (hence compact) subgroup of a compact Lie group $G$  such that the image of the isotropy representation
$\rho(H)$ is a   subgroup of $\tilde G_2\subset  Gl (7, \R)$. We observe that any  such homogeneous space $G/H$  also admits an invariant
$G_2$-structure,  hence  $\rho(H)$   is also a subgroup of $G_2 \subset Gl(7, \R)$.  
In section  4 we classify  all  homogeneous manifolds $G/H$ admitting invariant $G_2$-structures,  where $G$ is a compact Lie group
and $H$ is a  closed Lie subgroup (not necessary connected) of $G$, see Theorem \ref{Theorem.3.3.1}. Our classification is reduced to finding  all pairs $(G, H)$  such that the image of the isotropy representation
$\rho(H)$ is a   subgroup of $ G_2\subset  Gl (7, \R)$.  We also compute the dimension of the space of all $G$-invariant $G_2$-structures on a 
homogeneous manifold $G/H$, see Remark \ref{Remark.3.3.2}.a.
In section 5 we study a special class of homogeneous manifolds $G/H$ admitting invariant $G_2$-structures  using our classification.
Among these spaces there are many  known examples   of  manifolds   admitting  $G_2$-structures.  We  explain some known properties of these examples 
 using  simpler arguments based on our  classification. We also  present some  new results  concerning these spaces.   
 
Let us describe the method of our classification.   First we notice that $G/H$  admits a $G$-invariant $G_2$-structure (resp. $\tilde G_2$-structure), if and only if
it admits a $G$-invariant definite 3-form (resp.  indefinite  3-form). In the first step we find  all pairs  of   corresponding Lie algebras $(\h\subset \g)$. In the second step we  find  the  associated
pairs of Lie groups  $(H\subset G)$. The first step   is done  using  representation theory and is fairly standard, even it  could be done  using
some  special  software package. There is no algorithm known to solve the second  problem. So we have developed  a set of techniques to  find  the normalizer of a given connected Lie subgroup, and   after that we can find all  Lie  subgroups (not necessary connected) with a given Lie algebra  obtained in the first step.

Finally we remark that  the problem we solve in this note is a part of a more general question to  classify all homogeneous spaces $M=G/H$ admitting  $G$-invariant $\tilde G_2$-structures or  $G$-invariant $G_2$-structures.
If we require $M$ to be  compact  and with finite fundamental group,  by  the Montgomery theorem
\cite[Corollary 3]{Montgomery1950},  $M$  has also a transitive  action of a compact  subgroup $G' \subset G$. Thus  $G$ is a subgroup of  the  full  diffeomorphism group  of $M= G'/(G'\cap H)$  preserving a given  $G'$-invariant $\tilde G_2$- (resp. $G_2$-) structure on $M$. 

\section{ The groups $\tilde G_2$ and $G_2$}

In this section we  recall the definitions of  $\tilde G_2$ and $G_2$. We describe the  maximal compact subgroup
of $\tilde G_2$,  which is unique up to conjugacy by elements of $\tilde G_2$. We also describe maximal compact subgroups of $G_2$. These subgroups  are needed  for    our classifications in  sections 3 and 4.

\subsection{The group $\tilde G_2$ and its maximal compact subgroup $SO(4)$}

We refer the reader to \cite{Bryant1987}  for  a  definition and properties  of
the exceptional Lie group $\tilde G_2$.
For the convenience of the reader we  briefly
describe the group  $\tilde G_2$, which is less familiar  than its dual compact group $G_2$.

Let  us fix a  basis $e^1, \cdots , e^7$ in $(\R ^7) ^*$.
Denote by $\om ^{ijk}$ the 3-form $e ^i \wedge e ^j \wedge e ^k\in \Lambda ^3 (\R ^7) ^*$.

\medskip

\begin{definition}\label{Definition.2.1.1} \cite{Reichel1907}, see also  \cite[Definition 2, p.543]{Bryant1987}.  The group
$\tilde G_2$  is   defined as  the subgroup $\{ g \in GL(\R ^7)|\,  g ^* (\tilde \phi )  = \tilde \phi\}$ where
$$\tilde \phi = \om ^{123}-\om ^{145} -\om ^{167} -\om ^{246}+\om ^{257} +\om ^{347} + \om ^{356}.$$
\end{definition}
\medskip

\begin{lemma}\label{Lemma.2.1.3}\cite[Theorem 2]{Bryant1987} The group $\tilde G_2$ is the automorphism group of  the split-octonion algebra.  The group $\tilde G_2$ is connected.
\end{lemma}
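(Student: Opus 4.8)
The plan is to prove both assertions by realizing $\tilde G_2$ as the automorphism group of the split-octonion algebra $\tilde{\O}$ and then exploiting the geometry of its action on the imaginary part. Recall that $\tilde{\O}$ is the $8$-dimensional real algebra with unit $1$ carrying a nondegenerate multiplicative norm $N$ of signature $(4,4)$, so that $N(xy)=N(x)N(y)$ and $x\bar x=N(x)\cdot 1$, where $\bar x$ denotes octonionic conjugation. Writing $\la\, ,\ra$ for the symmetric bilinear form polarizing $N$, the imaginary part $V:=1^{\perp}=\{x:\bar x=-x\}$ is $7$-dimensional, $N$ restricts to $V$ as a form of signature $(4,3)$, and the product of two imaginary elements splits as $xy=-\la x,y\ra\,1+x\times y$ with $x\times y\in V$. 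First I would introduce the trilinear form $\varphi(x,y,z):=\la x\times y,z\ra=\la xy,z\ra$ on $V$ and check, using $x^2=-N(x)\cdot 1$ together with the alternativity of $\tilde{\O}$, that $\varphi$ is totally antisymmetric, hence a $3$-form. The first genuine task is then to produce a basis $e_1,\dots,e_7$ of $V$ adapted to the split multiplication table in which $\varphi$ becomes exactly the normal form $\tilde\phi=\om^{123}-\om^{145}-\om^{167}-\om^{246}+\om^{257}+\om^{347}+\om^{356}$ of Definition~\ref{Definition.2.1.1}; the pattern of signs is dictated by which products of the basis units are positive or negative with respect to the split norm.

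Granting this identification, the inclusion $\mathrm{Aut}(\tilde{\O})\subseteq\tilde G_2$ is the easier half. Any algebra automorphism $g$ fixes $1$ and preserves $N$, since the norm is recovered from the product by $N(x)\cdot 1=x\bar x$ and conjugation $\bar x=2\la x,1\ra\,1-x$ is itself determined by the algebra structure. Hence $g$ restricts to a linear isometry of $(V,\la\, ,\ra)$ commuting with the cross product, so it fixes $\varphi=\tilde\phi$ and lies in $\tilde G_2$. For the reverse inclusion $\tilde G_2\subseteq\mathrm{Aut}(\tilde{\O})$ the main computational point is that the metric $\la\, ,\ra$ on $V$ is reconstructible from $\tilde\phi$ alone: the $\Lambda^7V^*$-valued symmetric covariant $\hat B(u,v):=(\iota_u\tilde\phi)\wedge(\iota_v\tilde\phi)\wedge\tilde\phi$ is nondegenerate and, for stable $\tilde\phi$, recovers $\la\, ,\ra$ after normalization by the $\tilde\phi$-induced volume form. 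Since $g^*\tilde\phi=\tilde\phi$ forces $g$ to preserve $\hat B$ and this same volume form, one gets $\det g=1$ and that $g$ is a genuine isometry of $(V,\la\, ,\ra)$. Then $g$ preserves both $\la\, ,\ra$ and $\varphi$, hence the cross product, and extending $g$ by $g(1)=1$ yields an automorphism of $\tilde{\O}$. Combining the two inclusions gives $\tilde G_2=\mathrm{Aut}(\tilde{\O})$.

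For connectedness I would argue geometrically through the action of $\tilde G_2=\mathrm{Aut}(\tilde{\O})$ on $V$. The group preserves $N$ and the cross product, so it acts on each nonzero level set $\{x\in V:N(x)=c\}$, which for the signature $(4,3)$ form is a connected (noncompact) quadric. I would show the action is transitive on such a quadric and compute the stabilizer of a fixed imaginary unit $u$: elements fixing $u$ also fix the subalgebra $\R\,1\oplus\R\,u$ and act on its orthogonal complement in $V$, and one identifies the resulting group with the appropriate real form of $SL(3,\C)$ (namely $SL(3,\R)$ or $SU(2,1)$ according to the sign of $N(u)$), which is connected. The long exact homotopy sequence of the orbit fibration, with connected base and connected fiber, then forces $\tilde G_2$ to be connected. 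The genuine obstacle lies here rather than in the algebra: unlike the compact case, where $G_2$ fibers over the sphere $S^6$ of unit imaginary octonions with connected fiber $SU(3)$, the relevant level set is noncompact, so one must verify transitivity on a \emph{full} connected quadric (ruling out spurious components) and pin down the correct isomorphism type of the isotropy subgroup, and this is precisely where the split nature of the norm has to be handled with care.
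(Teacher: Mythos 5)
Your proposal is correct, and it fills in honestly what the paper itself mostly delegates to references: the paper's ``proof'' of Lemma \ref{Lemma.2.1.3} consists of citing \cite[Theorem 2]{Bryant1987}, pointing to \cite[\S 6.2]{LPV2008} for the inclusion $\tilde G_2 \subset Aut(\O_S)$, declaring the converse inclusion straightforward, and referring to Lemmas 2.1--2.2 of \cite{Le2006b} for connectedness. Your two inclusions and your fibration argument are the same overall strategy as those references, but one ingredient is genuinely different: for the hard inclusion $\tilde G_2 \subseteq Aut(\O_S)$ you reconstruct the metric directly from the stable $3$-form via the $\Lambda^7 V^*$-valued form $\hat B(u,v)=(\iota_u\tilde\phi)\wedge(\iota_v\tilde\phi)\wedge\tilde\phi$ in the style of \cite{Hitchin2000}, whereas \cite{LPV2008} passes through the $7$-dimensional simple Malcev algebra on $Im\,\O_S$ and only then recovers the full octonion product. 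The Hitchin route is more self-contained and makes transparent exactly why stabilizing an open-orbit $3$-form forces $g$ into $SO(4,3)$; the Malcev route buys a structural statement (automorphisms of the Malcev algebra) that the paper reuses elsewhere. Your connectedness argument via the orbit fibration over the connected quadric $\{N(x)=c\}$ with connected isotropy ($SU(2,1)$ or $SL(3,\R)$) is exactly the kind of argument carried out in \cite{Le2006b}; you correctly identify that transitivity on the noncompact quadric and the identification of the stabilizer are the points needing care, and both are standard consequences of the Witt-type extension theorem for composition algebras, so there is no gap.
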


Theorem 2 in  \cite{Bryant1987} cited above is given without a proof (but it can be proved in the same way as in the proof of \cite[Theorem 1]{Bryant1987}). 
A similar  explanation for the first assertion of Lemma \ref{Lemma.2.1.3} can be found in \cite[\S 6.2]{LPV2008}, where we  proved that $\tilde G_2$ is a subgroup of the automorphism  group of the Malcev simple algebra of dimension 7, which
is the imaginary part  $ Im\, \O_S$ of the  split-octonion algebra $\O_S$.  Since  the  multiplication on the Malcev algebra is the imaginary part of  the octonion multiplication  on  $ Im\, \O_S$, we get  easily  $\tilde G_2 \subset Aut (\O_S)$. The  other inclusion $Aut ( \O_S)\subset  \tilde G_2$   can be   verified straightforwardly.
A detailed proof  for the second assertion of Lemma \ref{Lemma.2.1.3} can be found in \cite{Le2006b} (the first version, which is also available at the arxiv server), namely this assertion is a direct consequence of  Lemmas 2.1 and 2.2 proved therein.

As a topological space, $\tilde G_2$ is a direct product of  its maximal compact Lie subgroup and
a vector space.

\begin{lemma}\label{Lemma.2.1.6}  The maximal compact subgroup of $\tilde G_2$ is $SO (4)$. The inclusion of $SO(4) \to \tilde G_2 \to Gl(\R ^ 7)$ acts on $\R^7$ with two irreducible subspaces of dimension 3 and dimension  4. Any compact subgroup of $\tilde G_2$ is conjugate to a
subgroup in the maximal compact subgroup $SO(4)$. 
\end{lemma}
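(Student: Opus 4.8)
The plan is to reduce everything to two inputs: the general structure theory of maximal compact subgroups of a connected Lie group, and the explicit model $\tilde G_2 = Aut(\O_S)$ furnished by Lemma \ref{Lemma.2.1.3}. Recall first that, since $\tilde G_2$ is connected, any two maximal compact subgroups are conjugate, every compact subgroup lies in a maximal one, and a maximal compact subgroup is itself connected (because $\tilde G_2$ deformation retracts onto it). Granting this, the third assertion of the lemma is automatic once we exhibit one maximal compact subgroup and check it is isomorphic to $SO(4)$; moreover the isomorphism type of the isotropy representation on $\R^7$ is a conjugacy invariant, so it suffices to compute it for our chosen model. Thus the task splits into: (i) exhibiting a compact $SO(4)\subset\tilde G_2$, (ii) computing its action on $\R^7$, and (iii) proving it is maximal.

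For (i) and (ii) I would use the split Cayley--Dickson presentation $\O_S=\H\oplus\H e$, where $\H$ is the positive definite division quaternion algebra and $e^2=+1$. The automorphisms of $\O_S$ preserving the subalgebra $\H$ restrict on $\H$ to inner automorphisms $q\mapsto uq\bar u$, so they are parametrized by $(u,v)\in SU(2)\times SU(2)$ acting by $q+xe\mapsto uq\bar u+(vx\bar u)e$, with kernel $\{\pm(1,1)\}$; this gives a compact subgroup isomorphic to $(SU(2)\times SU(2))/\Z_2=SO(4)$. On $Im\,\O_S=Im\,\H\oplus\H e=\R^3\oplus\R^4$ it acts on $\R^3$ through the covering $SO(4)\to SO(3)$ (the first factor as $SO(3)$, the second trivially) and on $\R^4\cong\H$ by the standard representation $x\mapsto vx\bar u$; both summands are irreducible and non-isomorphic. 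This settles (i) and (ii).

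For (iii) I would show that any maximal compact subgroup $K\supset SO(4)$ coincides with $SO(4)$. Since $\tilde\phi$ determines a symmetric bilinear form $Q$ of signature $(3,4)$ (the decomposition $Im\,\H\oplus\H e$ is $Q$-orthogonal with definite restrictions), one has $\tilde G_2\subset SO(3,4)$, so $K$ preserves $Q$ and, being compact, also a positive definite form $g$. Because $\R^3$ and $\R^4$ are non-isomorphic irreducible $SO(4)$-modules, Schur's lemma forces every $SO(4)$-invariant form, in particular $g$ and $Q$, to respect the splitting $\R^3\oplus\R^4$; hence $K$ preserves $Im\,\H$, and since $K$ fixes the identity $1\in\O_S$ it preserves $\H=\R1\oplus Im\,\H$. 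Therefore $K$ lies in the stabilizer of $\H$ in $\tilde G_2$. But an automorphism preserving $\H$ is determined by its restriction to $\H$ (an element of $Aut(\H)=SO(3)$, three parameters) together with the image of $e$ (a unit vector in $\H e\cong\R^4$, three parameters), so this stabilizer has dimension $6=\dim SO(4)$; as $K$ is connected it equals $SO(4)$.

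The genuinely substantive step is (iii), and within it the key point is that membership in $Aut(\O_S)$ rigidifies the action: once $K$ is seen to preserve the quaternion subalgebra $\H$, the automorphism property pins the $\R^4$-action to the $\R^3$-action and caps the dimension at $6$, ruling out any enlargement beyond $SO(4)$. Alternatively, one may shortcut (iii) by invoking that $\tilde G_2$ is the split real form $G_{2(2)}$, whose Cartan decomposition $\g=\k\oplus\pg$ has $\dim\k=6$ with $\k=\su(2)\oplus\su(2)=\so(4)$; a compact subalgebra of dimension $6$ containing $\so(4)$ must equal it. The remaining steps are routine octonionic bookkeeping.
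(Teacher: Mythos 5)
Your proof is correct, and it overlaps with the paper only in part. The explicit compact $SO(4)=(SU(2)\times SU(2))/\{\pm(1,1)\}$ acting by $q+xe\mapsto uq\bar u+(vx\bar u)e$ is exactly the embedding the paper records in (\ref{2.1.7}), and like the paper you dispose of the conjugacy assertion by the general Cartan--Iwasawa--Malcev/Helgason theory. Where you genuinely diverge is the maximality step. The paper's primary argument stays inside Lie theory: it invokes \cite[Theorem 1.1, p.252]{Helgason1978} to reduce to maximal compactly embedded subalgebras, identifies $\so(4)=\su(2)+\su(2)$ as such a subalgebra via the root decomposition of $\g_2^{\C}$ (the intersection of the normal form $\tilde\g_2$ with a compatible compact form), and reads off the $3+4$ splitting from the weights --- essentially the ``shortcut'' you relegate to your last sentence ($\dim\k=6$ in the Cartan decomposition). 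Your main route instead exploits the model $\tilde G_2=Aut(\O_S)$ of Lemma \ref{Lemma.2.1.3}: Schur's lemma applied to the pair of invariant forms $(g,Q)$ forces any compact $K\supset SO(4)$ to preserve $Im\,\H$ and hence the subalgebra $\H$, and the stabilizer of $\H$ has dimension at most $3+3=6$ because an automorphism is determined by $Aut(\H)\cong SO(3)$ together with the image of $e$ (a unit-norm element of $\H e$). This buys a self-contained, elementary maximality proof that does not need the root-space description of $\so(4)_{3,4}$, at the price of leaning on the unproved-in-the-paper identification $\tilde G_2=Aut(\O_S)$ and on the (routine but unchecked) verification that $\chi(q_1,q_2)$ really lands in $Aut(\O_S)$; the paper's route requires less octonionic bookkeeping but more imported structure theory. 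Both arguments are sound, and your observation that the isotropy representation is a conjugacy invariant cleanly handles the uniqueness of the $3+4$ decomposition.
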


 The first assertion of Lemma  \ref{Lemma.2.1.6} is known  to experts in the Cartan theory of real semisimple Lie groups but we don't find an explicit proof of it in standard text-books. In \cite[Corollary 2.4]{Le2006b} we  give
a  topological proof of  this assertion.
  For the convenience of the reader we  give here another algebraic proof,   which  explains also the second assertion of Lemma \ref{Lemma.2.1.6}. By \cite[Theorem 1.1, p.252]{Helgason1978} the maximal compact Lie subgroup of $\tilde G_2$ is connected whose Lie algebra is  a maximal compact Lie subalgebra in $\tilde \g_2$.
     Note that $\so(4)=\su(2)+\su(2)$ is a maximal compact Lie subalgebra
of $\tilde \g_2$  which can be described  in terms of the root decomposition  of the complex Lie algebra $\g _2 ^ \C$, namely it is  the intersection of the normal form $\tilde \g_2$  of $\g_2 ^\C$
and the compatible compact form $\g_2$. Using  the weights of the  representation
of the subalgebra $\su(2) +\su(2) \subset \tilde \g_2$ on $ \R ^7$,  it is easy to see that
the  corresponding connected Lie subgroup  in $\tilde G_2$ is $SO(4)$ and the corresponding representation is a sum
of two real irreducible  representations  of dimension  3 and dimension 4.
This proves the first and the second   assertion of  Lemma \ref{Lemma.2.1.6}.  The last assertion of Lemma \ref{Lemma.2.1.6} is a consequence of \cite[Theorem 2.1, p. 256]{Helgason1978}.

We now describe another way to construct    
an  explicit  embedding of $SO(4)$ into $\tilde G_2$, see \cite[chapter IV,(1.9), p. 115]{HL1982}, since it will be useful in our computations later. The group $Sp(1) \times Sp(1)$ acts on the split-octonion algebra $\O_S = \H \oplus \H e$ as follows:
\begin{equation}
\chi (q_1, q_2) ( a+ b e) : = (q_1 a \bar q_1+  q_2 b\bar q_1 e).
\label{2.1.7}
\end{equation}
It is easy to see  that 
this action defines an embedding  of $SO(4)$ into $\tilde G_2$. Thus  we  can regard  this maximal compact  subgroup $SO(4)$  as  the intersection  $\tilde G_2 \cap (SO (Im\, \H ) \times SO ( \H e) )$. Taking into account \cite[Theorem 1.1, p.252]{Helgason1978} this construction also gives
a proof of the first and the second assertion of Lemma \ref{Lemma.2.1.6}.

To distinguish an abstract Lie  group $SO(4)$ (resp. a Lie algebra $\so (4)$) with  its image inside $\tilde G_2$ (resp. $\tilde \g_2)$ we denote the later one by $SO(4)_{3,4}$ (resp. $\so(4)_{3,4}$). 
 Note that the conjugacy  class of
$SO(4)_{3,4}$ in $Gl(\R ^ 7)$ is defined uniquely by the highest weights of its representation.  
We denote by $\su(2)_{3,4}$ the Lie  subalgebra in  $\so(4)_{3,4}$ corresponding to  the Lie   subgroup $\{ \chi (q_1, 1)| q_1 \in Sp(1)\}$  in formula (\ref{2.1.7}),
and by $\su(2) _{0,4}$ the Lie subalgebra  corresponding to the Lie subgroup $\{ \chi(1, q_2)|, q_2\in Sp(1)\}$. The conjugacy   of $\so(4)_{3,4}= \su(2)_{3,4}+ \su(2)_{2,4}$ in  $gl(\R^7)$ is defined  uniquely up to conjugacy  by the highest weights  $(2,0)$ and $(1,1)$ of the  irreducible  components of the representation of  $\so(4)$  explained in   Lemma \ref{Lemma.2.1.6}  and in (\ref{2.1.7}). The weight $(2,0)$  corresponds to the  irreducible real representation of dimension 3, and the weight
(1,1) corresponds to the irreducible real representation of dimension 4. We refer the reader to  \cite[\S 8]{Onishchik2004}  for a comprehensive  exposition of the theory of real  representations of real semisimple
Lie algebras, or \cite[Appendix]{VO1988} for a compact exposition of the theory.
Since $SO(4)$ is connected, the  conjugacy class of an embedding $SO(4) \to Gl(\R^ 7)$ is defined uniquely by the
 representation  of its Lie algebra $\su(2)_1+ \su(2)_2$, where $\su(2)_1$  (resp. $\su(2)_2$)  is  the Lie algebra of the first  (resp.  the second) subgroup $Sp(1)$  defined just before (\ref{2.1.7}).  
 
We also remark that  there are  three non-conjugate subalgebras in $\so(4)_{3,4}$ which  are isomorphic to $\so(3) = \su(2)$. We denote by $\so(3)_{3,3}$ the third  Lie subalgebra in this  subclass.  It is  defined by the
diagonal embedding of $\so (3) =\su (2)$ into $\so (4)_{3,4}= \su (2)_{3,4} + \su (2)_{0,4}$.  
\medskip

We summarize  a part of  our discussion in the following

\begin{lemma}
\label{Lemma.2.1.8}   The image of a representation $\bar \chi: SO(4) \to Gl ( \R ^7)$ is conjugate  to $SO(4)_{3,4} \subset Gl(\R^7)$, if and only  $\bar \chi$ is  a  sum of two irreducible real representations, one
of dimension 3 with the  highest weight  $(2,0)$, and  the other  of dimension 4 with the highest weight $(1,1)$.
\end{lemma}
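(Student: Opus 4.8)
The plan is to convert the assertion about conjugacy of images in $Gl(\R^7)$ into one about equivalence of representations, which is then settled by comparing highest weights. Two observations reduce everything to bookkeeping. First, since $SO(4)$ is connected, a continuous real representation $\bar\chi$ is determined up to equivalence by its complexification $\bar\chi\otimes\C$, and hence by the highest weights $(m,n)$ of the complex irreducible constituents, where $(m,n)$ is read off the two $\su(2)$ summands of $\so(4)$; write $\rho_{(m,n)}$ for the real irreducible representation attached to $(m,n)$. Second, equivalent real representations have conjugate images, the conjugation being any real intertwiner in $Gl(\R^7)$. Let $\chi_0$ denote the standard representation of $SO(4)$ with image $SO(4)_{3,4}$.

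For the implication $\Leftarrow$ I would first check that $(2,0)$ and $(1,1)$ descend from $Sp(1)\times Sp(1)$ to $SO(4)$: the kernel element $(-1,-1)$ acts by $(-1)^{m+n}$, which is trivial when $m+n=2$. Both are of real type, so each defines a \emph{unique} real irreducible representation; $(2,0)$ gives the $3$-dimensional complexified $\so(3)$-module of the first factor, and $(1,1)=\C^2\otimes\C^2$, a tensor product of two quaternionic $\su(2)$-modules, gives the $4$-dimensional vector module. Lemma \ref{Lemma.2.1.6} and the explicit embedding (\ref{2.1.7}) then exhibit $SO(4)_{3,4}$ as the image of exactly $\rho_{(2,0)}\oplus\rho_{(1,1)}$: on $Im\,\H$ only $q_1$ acts, giving weight $(2,0)$, while on $\H e$ the rule $be\mapsto q_2b\bar q_1e$ is the vector module of weight $(1,1)$. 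Hence any $\bar\chi$ of the stated form is equivalent to $\chi_0$, and its image is conjugate to $SO(4)_{3,4}$.

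For $\Rightarrow$ I would begin from $\bar\chi(SO(4))=A\,SO(4)_{3,4}\,A^{-1}$. Since $SO(4)_{3,4}\cong SO(4)$ has dimension $6$, $\ker\bar\chi$ is $0$-dimensional, hence finite and central, so lies in $Z(SO(4))=\{\pm I\}$; and $\ker\bar\chi=\{\pm I\}$ is impossible, because it would force the image to be $SO(4)/\{\pm I\}\cong SO(3)\times SO(3)$, which has trivial centre and is therefore not isomorphic to $SO(4)$. Thus $\bar\chi$ is faithful and $\theta:=\chi_0^{-1}\circ(A^{-1}\bar\chi A)$ is an automorphism of $SO(4)$. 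If $\theta$ is inner, $\bar\chi$ is equivalent to $\chi_0$. The only remaining possibility is that $\theta$ is the outer swap of the two $\su(2)$ factors, so that $\bar\chi$ is equivalent to $\rho_{(0,2)}\oplus\rho_{(1,1)}$; as the two $\su(2)$ summands of $\so(4)$ carry no preferred order, this is the same datum as $\rho_{(2,0)}\oplus\rho_{(1,1)}$ (concretely, the two representations have images conjugate in $Gl(\R^7)$ by $\mathrm{diag}(1_3,R)$ with $R\in O(4)$ and $\det R=-1$). In either case $\bar\chi$ is the sum of a $3$-dimensional irreducible of weight $(2,0)$ and a $4$-dimensional irreducible of weight $(1,1)$.

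The hard part is the direction $\Rightarrow$: conjugacy of images yields only an abstract identification of $\bar\chi(SO(4))$ with $SO(4)_{3,4}$, so one must first exclude a nontrivial central kernel to upgrade it to an isomorphism $SO(4)\to SO(4)_{3,4}$, and then pin down the resulting automorphism $\theta$. Its one nontrivial case, the outer swap of the two $\su(2)$ factors, is precisely why the weight of the $3$-dimensional summand is well defined only up to exchanging $(2,0)$ and $(0,2)$; modulo this, the highest-weight classification of real representations of $SO(4)$ (see \cite[\S 8]{Onishchik2004} or \cite[Appendix]{VO1988}) completes the argument.
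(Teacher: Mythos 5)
Your proof is correct and follows essentially the same route as the paper, which deduces the lemma from the highest-weight classification of real representations of the connected group $SO(4)$ together with the weights $(2,0)$ and $(1,1)$ read off from the explicit embedding (\ref{2.1.7}). You additionally supply the two steps the paper leaves implicit in the ``only if'' direction --- ruling out a nontrivial central kernel and accounting for the outer swap of the two $\su(2)$ factors, which is exactly why the weight of the $3$-dimensional summand is only defined up to exchanging $(2,0)$ and $(0,2)$ --- a sharpening of the same argument rather than a different method.
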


\subsection{The group $G_2$ and its maximal compact  subgroups}

\begin{definition}
\label{Definition.3.1.1} \cite{Reichel1907}, see also  \cite[IV.1.A, p.114]{HL1982}, and  \cite[Definition 1, p.539]{Bryant1987}.  The group
$G_2$  is   defined as  the subgroup $\{ g \in GL(\R ^7)|\,  g ^* (\tilde \phi )  = \tilde \phi\}$ where
$$ \phi = \om ^{123}+\om ^{145} +\om ^{167} +\om ^{246}-\om ^{257} -\om ^{347} - \om ^{356}.$$
\end{definition}

We observe that $\phi + \tilde \phi  = 2 \om ^{123}$.

Dynkin's classical result \cite{Dynkin1952} asserts that the Lie algebra $\g_2$ has exactly three (up to conjugation) maximal subalgebras of dimensions
8, 6 and 3 respectively: $\su (3), \so (4)_{3,4}, \so (3)_7$, from which  we have  seen $\so(4)_{3,4}$ in  the  previous subsection.
The Lie subalgebra $\su(3)$ is the intersection $\g_2 \cap \gl (\R^6) \subset \gl(\R ^7)$ for any embedding $\gl (\R ^6) \subset \gl(\R ^7)$, see e.g. \cite[\S 2]{CS2002} for a proof.
The Lie subalgebra $\so(3)_7$ is defined by a real irreducible representation of $\su(2)$ of real dimension 7.

Let us fix the  basis $(e_i)$  of $\R ^7$  dual to  the basis $(e ^i)$.
Denote by $D_7$ the element  $diag (-1, 1, -1, 1, -1, 1, -1)\in Gl(\R^7)$. It is easy to check that $D_7$ preserves the form $\phi$, hence
$D_7 \in G_2$.

For any  element $a$ of order $k$ in a group $G$ we denote by $\Z_k[a]$ the  cyclic subgroup in $G$ generated by  $a$.

\begin{lemma}\label{Lemma.3.1.3}  Any maximal  proper subgroup in $G_2$ is conjugate to one of the  following subgroups in $G_2$:
$SU(3) \cdot \Z_2[D_7]$, $SO(4)_{3,4}$, $SO(3)_7$. 
\end{lemma}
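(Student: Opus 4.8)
The plan is to classify maximal subgroups by passing to identity components and computing normalizers, taking Dynkin's list of maximal subalgebras as the input. Let $K \subset G_2$ be a maximal proper closed subgroup with $\dim K > 0$, write $K_0$ for its identity component and $\k = \mathrm{Lie}(K_0) \subsetneq \g_2$. Since $K$ normalizes $K_0$, we have $K \subseteq N_{G_2}(K_0)$; because $\g_2$ is simple, a nontrivial proper connected subgroup cannot be normal, so $N_{G_2}(K_0) \neq G_2$, and maximality of $K$ forces $K = N_{G_2}(K_0)$. Thus every positive-dimensional maximal subgroup is the full normalizer of its identity component, and it remains to pin down which connected subgroups $K_0$ occur and to compute their normalizers.

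First I would show that $K_0$ is a \emph{maximal} connected subgroup, hence by Dynkin's theorem conjugate to one of $SU(3)$, $SO(4)_{3,4}$, $SO(3)_7$ (the connected subgroups of $\su(3)$, $\so(4)_{3,4}$, $\so(3)_7$). Suppose not; then $K_0 \subsetneq M_0$ for some maximal connected $M_0 \subsetneq G_2$. Each $k \in K$ carries $M_0$ to another maximal connected subgroup $kM_0k^{-1}$ still containing $kK_0k^{-1} = K_0$, so $K$ permutes the finite set of maximal connected subgroups containing $K_0$. Letting $P_0$ be the (connected) subgroup they generate, $P_0$ is normalized by $K$ and satisfies $K_0 \subsetneq M_0 \subseteq P_0$. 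If $P_0 \neq G_2$, then $N_{G_2}(P_0)$ is a proper closed subgroup strictly containing $K = N_{G_2}(K_0)$, contradicting maximality; the residual case $P_0 = G_2$ I would dispose of by inspecting the explicit embeddings, the typical situation being that $K_0$ is then forced to contain a maximal torus, and $N_{G_2}(T^2)$ already lies inside $SU(3)\cdot\Z_2[D_7]$ since $W(G_2) = W(A_2) \times \Z_2$. This reduction is the main obstacle, and it is where the structure of $G_2$ genuinely enters.

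The concrete heart is then the three normalizer computations, which rely on the explicit representations recorded in Lemmas \ref{Lemma.2.1.6} and \ref{Lemma.2.1.8} together with Dynkin's description. For $SU(3)$, which fixes a line and acts by the standard representation on the complementary $\C^3$, any normalizing element preserves the fixed line $\pm e_1$; the stabilizer of $e_1$ is $SU(3)$ itself, while $D_7$ sends $e_1 \mapsto -e_1$ and induces the complex-conjugation outer automorphism, giving $N_{G_2}(SU(3)) = SU(3)\cdot\Z_2[D_7]$. For $SO(4)_{3,4}$ the two irreducible summands of dimensions $3$ and $4$ are inequivalent, so a normalizer preserves each, and the asymmetric roles of $\su(2)_{3,4}$ and $\su(2)_{0,4}$ (weights $(2,0)$ and $(1,1)$) prevent the factor-swap outer automorphism from being realized, so $SO(4)_{3,4}$ is self-normalizing. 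For $SO(3)_7$, acting irreducibly on $\R^7$, Schur's lemma together with $-I \notin G_2$ (as $(-I)^*\phi = -\phi$) shows the centralizer is trivial, and $SO(3)$ admits no outer automorphisms, so $N_{G_2}(SO(3)_7) = SO(3)_7$.

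Finally I would check that each of the three groups is genuinely maximal: if a closed $F$ satisfies, say, $SU(3)\cdot\Z_2[D_7] \subsetneq F \subseteq G_2$, then $F_0 \supseteq SU(3)$ is connected, so by maximality of $SU(3)$ either $F_0 = G_2$ (whence $F = G_2$) or $F_0 = SU(3)$ (whence $F \subseteq N_{G_2}(SU(3)) = SU(3)\cdot\Z_2[D_7]$, a contradiction); the self-normalizing cases $SO(4)_{3,4}$ and $SO(3)_7$ are handled identically. I should note that this argument treats only positive-dimensional subgroups: a finite primitive maximal subgroup, if present, is invisible to the Lie-algebra reduction and would have to be excluded separately, but for the classification only the positive-dimensional maximal subgroups are needed, since these are what organize the admissible isotropy subalgebras $\h \subseteq \g_2$.
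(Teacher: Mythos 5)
Your proposal is correct, and its skeleton --- Dynkin's list of maximal subalgebras followed by a normalizer computation for each of $SU(3)$, $SO(4)_{3,4}$, $SO(3)_7$ --- is the same as the paper's; the $SU(3)$ and $SO(3)_7$ cases are handled exactly as in the text (invariance principle applied to the fixed line, Schur's Lemma together with $\Zz(G_2)=\{e\}$). Where you genuinely diverge is the case $SO(4)_{3,4}$: the paper works in $\Lambda^3(\R^7)^*$, using the $SO(4)_{3,4}$-fixed vector $\phi_0=\phi-7\om^{123}$ orthogonal to $\phi$ and excluding the possibility $g^*\phi_0=-\phi_0$, whereas you argue directly on $\R^7$, using that the two isotypic summands have different dimensions and that a factor swap of $\su(2)_{3,4}+\su(2)_{0,4}$ would change the highest weight $(2,0)$ of the $3$-dimensional summand. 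Your route is arguably cleaner, but it is not quite finished: knowing the induced automorphism is inner only gives $g\in SO(4)_{3,4}\cdot\Zz_{G_2}(SO(4)_{3,4})$, and you must still compute the centralizer --- by Schur it lies in $\{\pm 1_{\R^3}\}\times\{\pm 1_{\R^4}\}$, of which only the identity and $1_{\R^3}\oplus(-1_{\R^4})=\chi(1,-1)$ preserve $\phi$, and the latter already lies in $SO(4)_{3,4}$, so the conclusion stands. Two further remarks. First, your reduction showing that the identity component of a positive-dimensional maximal subgroup must itself be a maximal connected subgroup is a step the paper omits entirely; your normalizer-of-$P_0$ trick is sound, but the residual case $P_0=G_2$ is only sketched and does require a case-by-case check (non-maximal tori, $SU(2)_{0,4}$, $SO(3)_{3,3}$, etc., whose normalizers all land inside one of the three listed groups). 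Second, your closing caveat is not pedantry: compact $G_2$ does contain finite subgroups lying in no proper positive-dimensional subgroup, so the lemma is only correct when read as a statement about positive-dimensional maximal subgroups --- which is all the paper ever uses, since its purpose is to organize the possible isotropy subalgebras $\bar\rho(\h)\subset\g_2$.
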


This Lemma  is likely known to experts (see e.g. \cite[\S 8, p.112]{CS2002} for a statement without a proof that the normalizer $\Nn_{G_2} (SU(3))$ is $SU(3)\cdot \Z_2[D_7]$), but we do not have a reference with a  proof of it.   
  For the convenience of the reader we  give here a proof of  Lemma \ref{Lemma.3.1.3}  using  the Dynkin result above,  combining with the   invariance principle as well as with  the Schur's Lemma and its consequence stated below.

- {\it Invariance principle}. Suppose that $H^0$ is a (connected) subgroup of $G\subset SO (W)$. We denote by  $U $ the fixed-point subspace   of the action of $H^0$ on $W$. Then the normalizer $\Nn_G(H^0)$ preserves  the  subspace $U$ and  its orthogonal complement $U^\perp$.

- {\it Schur's Lemma and its consequence}. Suppose that the inclusion $H^0 \to G \to Gl (\R ^n) \to Gl (\C ^n)$ gives a  complex irreducible
representation of $H^0$  in $Gl (\C ^n)$. Then the centralizer $\Zz_G (H^0)$ is equal to the center $\Zz (G)$ of $G$. Using this we can  compute
$\Nn_G (H ^ 0)$ easily, taking into account the relation $ Int (H^0)  \subset \Nn _G (H^0)/\Zz_G(H^0)\subset Aut(H^0)$.

  Applying the invariance  principle to  $W= \R^7, U = \R$,  we   conclude that  if $x\in\Nn_{G_2} (SU(3))$ then either
 $x\in  Gl (\R^6)\cap G_2 = SU(3) $ or $x\cdot D_7\in Gl(\R^6) \cap G_2 = SU(3)$, what  proves the  first assertion.
 To compute $\Nn_{G_2} ( SO(4)_{3,4})$  we  apply the invariance principle to the space
 $W =(\Lambda ^3 ( \R ^7)^* ) ^\perp_\phi$ which is  the orthogonal complement to $\langle \phi \rangle _\R$ in
 $\Lambda  ^3 (\R ^7 ) ^*$.  Note that $\phi _0 = \phi - 7 \om ^{123}$ is an element of $W$, and $U = \la  \phi_0\ra _\R$ is the  fixed point  subspace of the induced $SO(4)_{3,4}$-action on $W$. By the invariance principle   $U$ is   invariant under the induced action of $\Nn_{G_2} (SO(4)_{3,4})$. Note that  for $g\in \Nn_{G_2} (SO(4)_{3,4})$, we have $g^* (\phi _0) = \pm \phi_0$, since $g\in SO(7)$. If  $g ^* (\phi _0) = \phi_0$, then
 $g$ must belong to $SO(4)_{3,4}$. If  not, then $g ^* ( 7\om ^{123}) =  2 \phi - 7\om ^{123}$.
Taking into account that $g$ preserves the induced norm on $W\subset \Lambda  ^3 (\R ^7 ) ^*$, we   obtain  a  contradiction. Hence $\Nn_{G_2} (SO (4)_{3,4})= SO(4)_{3,4}$.  
 Using  the Schur's Lemma and its consequence, taking into account that $\Zz(G_2) = \Z_1$ \cite[p.516]{Helgason1978},
we conclude that  the normalizer $\Nn_{G_2} (SO(3) _7)$  is $SO(3) _7$, the connected Lie subgroup having Lie algebra $\so(3)_7$.

\section{ Compact homogeneous  manifolds admitting  invariant $\tilde G_2$-structures}

In this section  we  classify homogeneous manifolds   $G/H$ admitting $G$-invariant $\tilde G_2$-structures, where $G$ is a compact Lie group
and $H$ is a  closed Lie subgroup (not necessary connected) of $G$.  Since $H$ is a compact Lie group, this problem is equivalent to the classification  of all pairs $(G, H)$  such that the image of the isotropy representation
$\rho(H)$ is a    compact subgroup of $\tilde G_2\subset  Gl (7, \R)$.  In subsection 3.1 we reduce the classification problem to a  representation problem, which is essentially linear when we classify only  the corresponding  Lie algebras $(\g, \h)$. The hardest part is to  find all  disconnected closed Lie subgroups $H$ whose isotropy representation maps $H$ into a  subgroup of  $\tilde G_2$. In subsection 3.2 we  summarize  our  classification in a table. We also compute the dimension of the  space
of $G$-invariant $\tilde G_2$-structures on  each  manifold $G/H$.

\subsection{Reduction to a representation problem} In this subsection we  first find  Lie algebras $(\h\subset \g)$ of
  compact Lie groups $(H\subset G)$  such that $(G/H)$ admits a $G$-invariant $\tilde G_2$-structure, and  then we find
  the  corresponding pairs $(H\subset G)$. Though the first step is a standard technique, we  describe all these
  algebras in detail, since  we use this description  in the second step.

Let  $G$ be a connected compact Lie group which acts transitively on  a  connected compact  smooth  manifold $M^7 = G/H$.
Without lost of generality we can assume that
$G$ acts   effectively  on $M$.

Let $\langle , \rangle _\g$ be a left and right invariant metric
on $G$.
Denote  by $\rho$ the isotropy representation  of $H$ on the  tangent space  $T_{eH} G/H = \R ^7$.
Let $\g$ (resp. $\h$) be the Lie algebra of $G$  (resp. $H$).
We write $\g = \h + V$, where $V $ is  the orthogonal complement to $\h$ w.r.t. $\langle , \rangle _\g$. 
Denote by $\bar \rho$ the induced isotropy action of  $\h$ on $ V$.
Since the action of $G$ is  almost effective, 
$ker \bar \rho = 0$.

Taking into account  Lemma \ref{Lemma.2.1.6} and our  discussion at the end of subsection 2.1 we get immediately

\begin{lemma}\label{red1} $G/H$ admits a $G$-invariant $\tilde G_2$-structure if and only if $\rho (H)$ lies in a  compact subgroup $SO (4)_{3,4} \subset Gl(V)$.
Consequently, the Lie subalgebra $\bar\rho(\h) \subset \so (4)_{3,4}$  is  one of the following  subalgebras  

1) $\bar\rho( \h) = \so (4)_{3,4}$; (we shall use $``= "$, ``be", ``coincide with", ``equal to"  for ``be conjugate to", if  no misunderstanding arises).

2) $\bar\rho(\h) =  \so(3)$ with three possible embeddings into $\so(4)_{3,4}$:\\
(2a)- $\bar \rho (\h) = so(3)_{3,3}$;\\
(2b)-   $\bar \rho (\h)= \su (2)_{3,4}$;\\
(2c) - $\bar \rho (\h) =\su (2)_{0,4}$.

3) $\bar\rho(\h) =  \so(3)+ \R$  with two  possible embeddings into $\so(4)_{3,4}$;\\
(3a) - the summand $\so(3) \subset \bar\rho(\h)$ coincides with  $\su (2)_{3,4}$,\\
(3b) -  the summand $\so(3) \subset \bar\rho(\h)$ coincides with  $\su (2)_{0,4}$.

4) $\bar\rho( \h) =  \R ^2 $.

5) $\bar\rho( \h) =  \R ^1 = \so (2) $ (there are infinitely many inequivalent embeddings of  $\so (2)$ into $\so (4)$).

6) $\bar\rho(\h) =  0 $.
\end{lemma}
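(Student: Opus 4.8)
The plan is to prove the equivalence first and then read off the list of subalgebras by a dimension-by-dimension classification of subalgebras of $\so(4)_{3,4} = \su(2) + \su(2)$. For the equivalence I would recall that a $G$-invariant $\tilde G_2$-structure is the same datum as a $G$-invariant indefinite $3$-form on $M^7$, which by evaluation at $eH$ is exactly an $\rho(H)$-invariant $3$-form $\psi$ on $V \cong \R^7$ lying in the open $GL(V)$-orbit of $\tilde\phi$. Choosing a linear isomorphism $V \to \R^7$ that carries $\psi$ to $\tilde\phi$ (possible precisely because both lie in that single orbit), the invariance condition $\rho(h)^*\psi = \psi$ becomes $\rho(h)^*\tilde\phi = \tilde\phi$, i.e. $\rho(H) \subset \tilde G_2$ by Definition \ref{Definition.2.1.1}; conversely any conjugate of $\tilde\phi$ fixed by $\rho(H)$ produces a $G$-invariant indefinite $3$-form. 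Since $H$ is closed in the compact group $G$ it is compact, so $\rho(H)$ is a compact subgroup of $\tilde G_2$, and Lemma \ref{Lemma.2.1.6} forces it, up to conjugacy, into the maximal compact subgroup $SO(4)_{3,4}$. This gives the first assertion.

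For the list, effectiveness gives $\ker\bar\rho = 0$, so $\bar\rho$ identifies $\h$ with a subalgebra of $\so(4)_{3,4} \cong \su(2)+\su(2)$, and it remains to enumerate all such subalgebras up to conjugacy. The structural input I would use is that any subalgebra $\k$ of a compact Lie algebra inherits an invariant inner product for which $\ad_\k$ is skew-symmetric, hence $\k$ is itself compact and reductive; this immediately rules out the non-abelian $2$-dimensional algebra and all solvable $3$-dimensional algebras. Since the rank is $2$, no abelian subalgebra can exceed dimension $2$. Organizing by dimension then gives: dimension $6$ is all of $\so(4)_{3,4}$ (Case 1); dimension $2$ is a maximal torus $\R^2$, unique up to conjugacy (Case 4); dimension $1$ is a line $\so(2)$, and since its two factor-projections have independent ``slopes'' one gets the stated continuum of inequivalent embeddings (Case 5); dimension $0$ is trivial (Case 6).

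The dimensions $3$ and $4$ carry the real content. A $3$-dimensional compact subalgebra must be $\su(2)$, and a homomorphism $\su(2) \to \su(2)+\su(2)$ has each component either zero or an isomorphism by simplicity, yielding exactly $\su(2)_{3,4}$, $\su(2)_{0,4}$, and the diagonal $\so(3)_{3,3}$ (Case 2); the diagonal is a single class because every automorphism of $\su(2)$ is inner. A $4$-dimensional compact subalgebra must be $\so(3)+\R$, and the decisive move is to place the central $\R$ inside the centralizer of its $\su(2)$-part: the centralizer of a factor $\su(2)_{3,4}$ (resp. $\su(2)_{0,4}$) is the opposite factor, whose lines are all conjugate, giving Cases 3a and 3b, whereas the centralizer of the diagonal $\so(3)_{3,3}$ is $0$, so no $4$-dimensional subalgebra contains it.

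This last centralizer computation, together with its companion observation that dimension $5$ is impossible (an $\R^2$ would have to embed in the rank-one centralizer of an $\su(2)$-factor), is where I expect the main obstacle to lie: it is exactly what explains why Case 3 admits only two embeddings and why the dimension list has a gap at $5$. The remaining verifications—that the three $\su(2)$-embeddings and the two $\su(2)+\R$ embeddings are pairwise non-conjugate, and that the $\so(2)$ family is genuinely infinite—follow from comparing highest weights of the induced representations on $\R^7$ as recorded in Lemma \ref{Lemma.2.1.6} and Lemma \ref{Lemma.2.1.8}, and are routine once the reductivity and centralizer steps are in place.
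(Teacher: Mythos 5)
Your proof is correct and follows the same route the paper intends: reduce a $G$-invariant $\tilde G_2$-structure to a $\rho(H)$-invariant indefinite $3$-form on $V$, invoke Lemma \ref{Lemma.2.1.6} to conjugate the compact group $\rho(H)$ into $SO(4)_{3,4}$, and then list the subalgebras of $\so(4)_{3,4}=\su(2)+\su(2)$ up to conjugacy. The paper declares this immediate from Lemma \ref{Lemma.2.1.6} and the discussion at the end of subsection 2.1; your dimension-by-dimension enumeration (reductivity of subalgebras of a compact Lie algebra, the centralizer computation for the $\su(2)$-part, the exclusion of dimension $5$) simply writes out the subalgebra classification the paper leaves implicit.
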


Let us  explain our method to find all pairs $(H \subset G)$   satisfying  the conditions in our classification.

By Levy decomposition theorem we can represent  $G $ as a quotient $(G ^{sc} \times T^k)/Z $,   where $G^{sc}$ is a connected simply-connected semisimple  compact Lie group  and  $Z$ is a  finite  central subgroup  of $\hat G = G ^{sc} \times T^k$.  Denote by
$p$ the projection $\hat G \to G$. Note that  the action of $\hat G $ on $\hat  G/ p ^{-1} (H)$ is almost effective.
Moreover the image of the  isotropy action of  $p^{-1}( H)$ on $V$  coincides with the image of the isotropy action
of $H$ on $V$. Hence $\hat G/p ^{-1}(H)$  admits a $\hat G $-invariant $\tilde G_2$-structure, if $G/H$ does.
Next we observe that the effectiveness of the action of $G$ on $G/H$ is equivalent to the relation $\Zz (G) \cap H  = Id$, assuming
that the action of $G$ on $G/H$ is almost effective, i.e. $\ker \bar \rho  = 0$.
This is  equivalent to the relation $\Zz (\hat G) \cap  p ^{-1} (H) = Z$. Under the assumption that $\hat G$ acts on $\hat G/H'$ almost effectively,   we reduce  a classification  of all pairs
$H\subset G$ satisfying our conditions  to a  classification of all pairs $(H'\subset \hat G)$
such that $\hat G /H'$ admits a $\hat G$-invariant  $\tilde G_2$-structure.  To get the corresponding groups $H\subset G$ we set $G =  \hat G/ (\Zz(\hat G)\cap H')$, $H = H'/ ( \Zz(\hat G)\cap  H')$. 
We  solve this problem  in the following steps. In the first step, for each  possibility   among (1) -(6) above, we   find all pairs
$(\h \subset \g)$  of a compact Lie algebra $\h$ of co-dimension 7  in a compact Lie algebra $\g$ such that the adjoint
representation $\bar \rho (\h)$ on $V$ is the  given possibility, moreover $\ker \bar \rho = 0$. Then we find a connected Lie  subgroup $H^0\subset \hat G$ with the given Lie
algebra $\h \subset \g$. As we have mentioned above, this step is fairly standard.

In the second step  we   find  all Lie subgroups $H$  in $\hat G$ with Lie algebra $\h$ obtained in the first step. 
This subgroup lies in the normalizer $\Nn_{\hat G}(H^0)$. It is an extension of a finite subgroup $\Gamma$ in $\Nn_{\hat G} (H^0)/H^0$ by $H^0$. 
In our  note  we  compute the normalizer  of a  connected Lie subgroup $H^0$ in a compact Lie group $G$ by using ad hoc methods for each separate case.   The invariance principle as well as the Schur's Lemma and its consequence   are also used frequently in  our consideration.

In the third step we  verify  if the isotropy action
of this  subgroup $H$ on $V$  lifts to an embedding into the group $SO (4)_{3,4}\subset \tilde G_2$. 

In the final step we compute   $\Zz (\hat G)\cap H$, knowing $\Zz (\hat G) = \Zz ( G^{sc}) \times T^k$.  The center $\Zz(G^{sc})$ is known, see e.g. table 10 in \cite{VO1988}.

Now we proceed to   consider each  possibility listed  in Lemma \ref{red1}.
 
 {\it  Possibility 1 with $\bar\rho( \h) = \so (4)_{3,4}$}. Taking into account Lemma \ref{Lemma.2.1.8} we conclude that
 $\g$ must be semisimple. Since $\dim \g = 13$  and $\g \supset \so(4)$,  we conclude that  $ \g = \so (5) + \so (3)$.

\begin{proposition} \label{Proposition.2.2.1} Suppose that $\hat G/H$  admits a $\hat G$-invariant  $\tilde G_2$-structure such that their corresponding Lie algebras $(\h\subset \g)$ are  in possibility 1. Then  $\hat G= G^{sc} = Sp(2)\times Sp(1)$. The corresponding Lie subgroup $H$ is either  $Sp(1)_1\times  Sp (1)_2$, or the normalizer $Sp(1)_1 \times  Sp (1)_2\times \Z_2[\Zz(Sp(1))]$, described in the proof below. The kernel of the   $G^{sc}$-action  is  $\Z_2$, or $\Z_2 \times \Z_2[\Zz(Sp(1))]$ respectively.
\end{proposition}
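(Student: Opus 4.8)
The plan is to carry out the four-step scheme described above for the case $\bar\rho(\h) = \so(4)_{3,4}$, where we have already seen that $\g = \so(5)+\so(3)$. Since $\g$ is semisimple there is no toral factor, so the universal object is $\hat G = G^{sc} = Sp(2)\times Sp(1)$, using $\so(5)\cong\ssp(2)$ and $\so(3)\cong\ssp(1)$; this settles the first assertion.

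To pin down the connected subgroup $H^0$ up to conjugacy, I would realize $\ssp(2)$ as the skew-Hermitian quaternionic $2\times2$ matrices acting on $\H^2$, with block-diagonal part $\ssp(1)_a\oplus\ssp(1)_b$ and off-diagonal part $W\cong\H$. Projecting $\h\cong\su(2)\oplus\su(2)$ onto the external factor $\ssp(1)$ and using that the ideals of $\su(2)\oplus\su(2)$ have dimension $0$, $3$ or $6$, the projection must be onto with a $3$-dimensional kernel (the case of zero projection is excluded, since then the external factor would act trivially on $V$, contradicting $\ker\bar\rho=0$). Hence one $\su(2)$ factor is a block $\ssp(1)_a\subset\ssp(2)$ and the other is the graph of an isomorphism onto the commuting block $\ssp(1)_b$; as $\so(4)\subset\so(5)$ is unique up to conjugacy, $H^0$ is conjugate to $Sp(1)_1\times Sp(1)_2$ with $Sp(1)_1=\{(\mathrm{diag}(q,1),1)\}$ and $Sp(1)_2=\{(\mathrm{diag}(1,q),q)\}$. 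A short weight count then shows $V=\g/\h$ splits into irreducibles of dimension $3$ and $4$ with highest weights $(2,0)$ and $(1,1)$, which by Lemma \ref{Lemma.2.1.8} is exactly $SO(4)_{3,4}$.

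The crux is the normalizer computation. Writing a normalizing element as $(g,h)\in Sp(2)\times Sp(1)$, conjugation must send $(\mathrm{diag}(q_1,q_2),q_2)$ back into $H^0$ for all $q_1,q_2$. The key point is that the block swap $\sigma=\left(\begin{smallmatrix}0&1\\1&0\end{smallmatrix}\right)\in Sp(2)$, which generates the extra component of the normalizer of the block-diagonal $Sp(1)_a\times Sp(1)_b$ in $Sp(2)$, is here excluded: it would force $hq_2h^{-1}=q_1$ for all $q_1,q_2$, which is impossible because the lower block is rigidly tied to the external factor through $Sp(1)_2$. Thus $g$ preserves each block, so $g=\mathrm{diag}(A,D)$, and matching the external component forces $h=\pm D$. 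This gives $\Nn_{\hat G}(H^0)=\{(\mathrm{diag}(A,D),\pm D)\}=H^0\cup zH^0=H^0\cdot\Z_2[z]$, where $z=(\mathrm{Id}_2,-1)$ generates $\Z_2[\Zz(Sp(1))]$ in the external factor. I expect this exclusion of $\sigma$ to be the main obstacle, since it is precisely what separates the answer from the naive guess that a Weyl-type reflection survives.

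Consequently the only closed subgroups with Lie algebra $\h$ are $H^0$ and the full normalizer $H^0\cdot\Z_2[z]$. Because $z$ is central in $\hat G$ it acts trivially on $V$, so both have isotropy image $SO(4)_{3,4}$ and pass the lifting test; no further cases survive. It remains to compute $\Zz(\hat G)\cap H$ from $\Zz(\hat G)=\{\pm\mathrm{Id}_2\}\times\{\pm1\}$: for $H=H^0$ the intersection is $\{(\mathrm{Id}_2,1),(-\mathrm{Id}_2,-1)\}\cong\Z_2$, while for $H=H^0\cdot\Z_2[z]$ it is all of $\Zz(\hat G)\cong\Z_2\times\Z_2[\Zz(Sp(1))]$. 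These are the two stated kernels, and quotienting yields the effective groups $(Sp(2)\times Sp(1))/\Z_2$ and $SO(5)\times SO(3)$ respectively.
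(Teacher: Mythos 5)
Your proof is correct and follows essentially the same route as the paper: the same identification of the embedding $\h=\ssp(1)_1+\ssp(1)_2\hookrightarrow \ssp(2)+\ssp(1)$ (one factor a diagonal block, the other the graph onto the external $\ssp(1)$), the same splitting $V=W\oplus W^\perp$ analysed via Lemma \ref{Lemma.2.1.8}, and the same normalizer $(Sp(1)_1\times Sp(1)_2)\times\Z_2[\Zz(Sp(1))]$ and kernels --- your explicit exclusion of the block swap $\sigma$ merely spells out what the paper compresses into an appeal to the invariance principle. One small correction: ruling out the case where $\h$ projects to zero on the external $\ssp(1)$ does not follow from $\ker\bar\rho=0$ (the block $\so(4)\subset\so(5)$ would still act faithfully on $V$); it follows because $\h$ would then act trivially on the $\so(3)$-ideal sitting inside $V$, producing a three-dimensional trivial summand, which is incompatible with $\bar\rho(\h)=\so(4)_{3,4}$.
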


\begin{proof}  In this case
the embedding $\Pi: \h=\so(4) = \so(3)_1 +\so(3)_2 \to \g= \so (5) +\so(3) $  is  defined as follows.
$\Pi$ is a direct sum of the canonical embedding $\Pi ^0 : \h = \ssp (1)_1 + \ssp(1)_2 \to \ssp(2) = \so (5) \subset \g$
and the projection  $\Pi ^1 $  from $\h$ to the ideal $\so(3)  \subset \g$.  In this note  we use  frequently isomorphism $\ssp (1) = \so (3)=\su(2)$, so  $\ssp (1)_i$ denotes the same subalgebra $\so (3)_i$, i =1,2.
The space $V$ is $W + W ^\perp$, where $W$ is the orthogonal complement of $\Pi^0 (\ssp (1)_1 + \ssp(1)_2)$
in $\ssp(2)$ and $W^\perp$ is  the orthogonal complement of $W$ in $V$. 
We also denote by  $\Pi$ the lift  of the representation
$\Pi$  to the corresponding simply connected Lie group $G^{sc}$.  Let $Sp(1)_i$ be the corresponding  Lie subgroup in $G^{sc} = Sp(2) \times Sp(1)$
with Lie subalgebra $\ssp(1)_i$.
Below we  decompose
$\ssp(2) = \Pi^0 (\h) +W$ in  a matrix expression, cf. \cite[p. 446]{Helgason1978}. 

$$
\ssp(2) = \left(\begin{array}{cccc}
ia_1 & w_1 & z_1& w_2\\
 - \bar w_1 & ia_2 & w_2& z_2\\
-\bar z_1 &- \bar w_2 & -ia_1 & w_1\\
- \bar w_2 & -\bar z_2&  -\bar w_1 & -ia_2\\
\end{array}\right) \subset \su(4),  w_i , z_i \in \C \text{ and } a_i \in \R.
$$

The subspace $W$  consists of those matrices with vanishing $a_i$ and $z_i$.
Here  is a matrix representation of 

$$
W ^\perp =\{  \left(\begin{array}{cccc}
0 & 0 & 0& 0\\
0 & -ia_2 & 0& -z_2\\
 0& 0 & 0 & 0\\
0 & \bar z_2&  0 & ia_2\\
\end{array}\right), \left  (\begin{array}{cc}
 ia_2 & z_2\\
 -\bar z_2&   -ia_2\\
\end{array}\right)\} \in \ssp (2) + \ssp(1).
$$
By Lemma \ref{Lemma.2.1.8}, the image of  the adjoint
representation $\rho(Sp(1)_1 \times Sp(1)_2)$  on $V = W+ W ^{\perp}$ is $SO(4)_{3,4}$. 
Using the invariance principle, we conclude that the normalizer of $Sp(1)_1 \times  Sp (1)_2$
in $G ^{sc} = Sp (2) \times Sp(1)$ is  $(Sp (1)_1 \times Sp (1)_2)\times \Zz(Sp(1))$.  This proves  the first and the second assertion of 
Proposition \ref{Proposition.2.2.1}. The last assertion  follows from a direct computation.
\end{proof}

{\it  Possibility 2   with $\h = \so (3)$}. Recall that there are three sub-cases (2a), (2b) and (2c). We denote by $SO(3)_{3,3}$  (resp.  $SU(2)_{3,4}$, $SU(2)_{2,4}$)
the    connected Lie subgroup in $SO(4)_{3,4}$ whose Lie algebra is $\so(3)_{3,3}$ (resp. $\su(2)_{3,4}$, $\su(2)_{2,4}$).

 From Lemma \ref{Lemma.2.1.8}  we get  immediately

\begin{lemma}
\label{Lemma.2.2.2}   An embedding $\Pi : so(3)\to   gl ( \R ^7)$ can be factored
as an embedding  $\Pi: so(3)  \to so(4)_{3,4} \subset gl ( \R ^7)$,  if and only if  one of the following 
three conditions holds.

Case (i). $\Pi$  is a direct sum of  two real irreducible representations of dimension 3 and  one trivial  representation.
In this case the  image of the induced embedding $\Pi _* (\so (3))$ is $\so(3)_{3,3}$ associated with  case (2a).

Case (ii). $\Pi$  is  a sum of one real irreducible representation of dimension 4  and one real irreducible representation  of dimension 3. In this case the image of the induced embedding $\Pi _* (\so (3))$ is $\su(2)_{3,4}$  associated with  case (2b).

Case (iii). $\Pi$ is  a sum of a real irreducible representation of dimension 4 and  three real representations of dimension 1. In this case the image of the induced embedding $\Pi _* (\so (3))$  is $\su(2)_{0,4}$ associated  with  case (2c). 
\end{lemma}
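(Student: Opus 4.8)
The plan is to extract both implications from the explicit description of the $\so(4)_{3,4}$-module $\R^7$ given by Lemma \ref{Lemma.2.1.8}, realised concretely on $Im\,\O_S = Im\,\H \oplus \H e$ via the action $\chi$ of (\ref{2.1.7}); here the summand $Im\,\H$ is the $3$-dimensional irreducible of highest weight $(2,0)$, and $\H e$ the $4$-dimensional irreducible of highest weight $(1,1)$, for $\so(4)_{3,4} = \su(2)_{3,4}\oplus\su(2)_{0,4}$.

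For the ``only if'' direction I would first note that, since $\so(3)$ is simple, any $\so(3)$-subalgebra of $\so(4)_{3,4} = \su(2)_{3,4}\oplus\su(2)_{0,4}$ projects to each simple factor either trivially or isomorphically; hence, up to conjugacy, it is one of the three subalgebras $\su(2)_{3,4}$, $\su(2)_{0,4}$, $\so(3)_{3,3}$ already isolated before Lemma \ref{Lemma.2.1.8} (the diagonal being unique up to inner conjugation, as all automorphisms of $\su(2)$ are inner). Thus if $\Pi$ factors through $\so(4)_{3,4}$ its image is conjugate to one of these three, and it remains to restrict the module $Im\,\H \oplus \H e$ to each, which I would read off directly from (\ref{2.1.7}). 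On $\{\chi(q_1,1)\}$ the summand $Im\,\H$ transforms by $a\mapsto q_1 a\bar q_1$ ($3$-dimensional irreducible) and $\H e$ by $be\mapsto b\bar q_1\,e$ ($4$-dimensional irreducible), yielding Case (ii); on $\{\chi(1,q_2)\}$ the summand $Im\,\H$ is fixed (three trivial summands) while $\H e$ transforms by $be\mapsto q_2 b\,e$ ($4$-dimensional irreducible), yielding Case (iii); and on the diagonal $\{\chi(q,q)\}$ the summand $Im\,\H$ remains $3$-dimensional irreducible while $be\mapsto q b\bar q\,e$ fixes $\R e$ and acts irreducibly on $(Im\,\H)e$, so $\H e = \R e\oplus(Im\,\H)e$, giving two $3$-dimensional irreducibles and one trivial summand, i.e. Case (i).

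For the ``if'' direction the key point is that a real representation of the compact group $SO(3)$ is determined up to conjugacy in $Gl(\R^7)$ by its isotypic decomposition, and that the $3$-dimensional and $4$-dimensional real irreducibles of $\so(3)$ are each unique. Consequently a $\Pi$ of type (i), (ii) or (iii) is isomorphic, hence conjugate in $Gl(\R^7)$, to the corresponding restriction computed above, so that $\Pi_*(\so(3))$ is conjugate to $\so(3)_{3,3}$, $\su(2)_{3,4}$ or $\su(2)_{0,4}$ respectively; in particular $\Pi$ factors through $\so(4)_{3,4}$, and the assignment to (2a), (2b), (2c) is as claimed. Since the three decomposition patterns $3+3+1$, $4+3$ and $4+1+1+1$ are pairwise distinct, the three cases are moreover mutually exclusive.

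I expect the only genuine content to lie in the branching computation of the middle paragraph, where one must keep track of the quaternionic ($4$-dimensional real irreducible) structure of $\H e$ and of the splitting $\H e = \R e \oplus (Im\,\H)e$ under the diagonal subalgebra; both implications then rest on the elementary fact that isomorphic real representations are conjugate in $Gl(\R^7)$, so no further obstacle arises.
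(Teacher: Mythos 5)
Your proof is correct and follows the same route the paper intends: the paper derives Lemma \ref{Lemma.2.2.2} ``immediately'' from Lemma \ref{Lemma.2.1.8}, i.e.\ from the $3+4$ decomposition of the $\so(4)_{3,4}$-module $\R^7$, restricted to the three conjugacy classes of $\so(3)$-subalgebras of $\su(2)_{3,4}+\su(2)_{0,4}$. You have simply written out the branching explicitly via the action (\ref{2.1.7}) and supplied the standard uniqueness facts for real irreducibles of $\so(3)$; nothing is missing.
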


 Lemma \ref{Lemma.2.2.2} implies that $\g$ cannot contain a commutative ideal of dimension greater than or equal  to 4. Taking into account $\dim \g = 10$, we  conclude that  $\g$ must be one of the following
Lie algebras:\\ 
i) $\g = \so (5)$,\\
ii)  $\g = \su(3) + \R ^2$,\\
iii) $\g = 3 \so(3) + \R$.

Let us denote the element $diag (1, -1, -1, -1, -1)\in SO(5)$ by $D_{1,4}$.  We denote by $p$ the projection from $Spin (5)$ to $SO(5)$.
Then $p^{-1}(\Z_2 [ D_{1,4}]) = \Z_2 \times \Z_2 \subset Spin (5)$.

Let $H$  be  a Lie subgroup of   a Lie group $G$ and $\Gamma$ be a finite subgroup  of  the normalizer  $\Nn_G (H)$. We denote by $H \cdot \Gamma$ the
Lie  subgroup in $G$ generated by $H$ and $\Gamma$.   If the intersection $\Gamma$ with $H$ is   the neutral element $e \in G$,   and $\Gamma$ is a subgroup of  the  centralizer $\Zz_G(H)$, then we also write $H \times \Gamma$ instead of 
$H \cdot \Gamma$.

\begin{proposition}
\label{Proposition.2.2.3}  Suppose that $\hat G/H$  admits a $\hat G$-invariant  $\tilde G_2$-structure such that their corresponding Lie algebras $(\h\subset \g)$ are  in possibility 2. Then one of the following case happens.\\
 Case (i) with $\hat G = Spin (5)= Sp(2)$. Then $H$ is  conjugate to one of the  following  subgroups\\
-   $Sp(1)\cdot \Gamma$, where $Sp(1)$ is diagonally embedded  into  $Sp(1) \times Sp(1)  \subset  Sp(2)=Spin(5)$ (case (2a)) and $\Gamma \subset p ^{-1}(\Z_2 [ D_{1,4}])$. The kernel of the action is  $\Zz(Spin(5)) = \Z_2$.\\
- $ Sp(1) \times \Gamma$, where $Sp(1)$ is the canonically embedded $Sp (1) \subset Sp (2)$ (case (2c)) and $\Gamma$ is a finite  subgroup in  $ Sp (1)_2 \subset \Zz_{Sp(2)} (Sp(1))$ described in the proof below. The kernel of the  $\hat G$-action
on $\hat G/H$ is $\Zz(\hat G) \cap \Gamma$.\\
Case (ii) with $\hat G = SU (3) \times T^2$.  Then  $H$ is conjugate to $ SU (2)\cdot \Gamma$, where $SU(2)$  corresponds to the irreducible complex representation of $\h $ into $ \su (3)\subset \g$  of dimension 2 (case (2c)) and $\Gamma$ is a finite subgroup of $\Zz(SU(3))\times T ^2$.  The kernel of the  $\hat G$-action is $\Gamma$.\\
Case (iii)  with  $\hat G = Sp (1) \times Sp (1)\times Sp (1) \times U(1)$.  Then  $H$ is $ H^0 \cdot \Gamma$. Here $H ^0$ is the subgroup $Sp (1)$ diagonally embedded in $Sp(1)\times Sp(1) \times Sp(1)\subset \hat G$ (case (2a))  and $\Gamma$ is a finite  subgroup of $ \Zz (\hat G)$. 
The kernel of the $\hat G$-action is $\Z_2[\Zz (H^0)] \cdot \Gamma$. 
\end{proposition}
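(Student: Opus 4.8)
The plan is to run, for the final Proposition, the four-step programme set out at the start of this subsection, separately for each of the three candidate algebras
$\g\in\{\so(5),\ \su(3)+\R^2,\ 3\so(3)+\R\}$ isolated above; the three cases of the Proposition correspond exactly to these three choices of $\g$, the attached simply connected group $\hat G=G^{sc}\times T^k$ being $Sp(2)=Spin(5)$, $SU(3)\times T^2$ and $Sp(1)^3\times U(1)$ respectively.

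\emph{Step 1 (Lie algebra level).} For each $\g$ I would first list, up to conjugacy, all embeddings $\Pi:\so(3)\hookrightarrow\g$ and retain only those for which the isotropy module $V=\g/\h$ is one of the admissible types of Lemma \ref{Lemma.2.2.2}, namely $3+3+1$ (2a), $4+3$ (2b) or $4+1+1+1$ (2c). Since $\so(3)$ is simple, any $\Pi$ lands in the semisimple part, so the abelian summands contribute only trivial modules to $V$, and computing $V$ is routine (decompose $\mathrm{Sym}^2$ of the standard module for $\so(5)=\ssp(2)$, the traceless part $(\C^3\otimes\overline{\C^3})_0$ for $\su(3)$, and a sum of adjoints for $3\so(3)$, then pass to real forms). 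The outcome is: for $\so(5)$ the diagonal $\ssp(1)\hookrightarrow\ssp(1)\oplus\ssp(1)\subset\ssp(2)$ gives (2a) and the canonical $\ssp(1)\subset\ssp(2)$ gives (2c), while the principal $\so(3)$ gives the irreducible $7$ (the $\so(3)_7$ of Lemma \ref{Lemma.3.1.3}, not of $\so(4)_{3,4}$-type) and is discarded; for $\su(3)+\R^2$ the standard block $\su(2)\subset\su(3)$ gives (2c), whereas the irreducible $\so(3)\subset\su(3)$ yields the inadmissible $V=5+1+1$; for $3\so(3)+\R$ only the embedding diagonal in all three factors gives (2a), the diagonal in two factors giving $3+1+1+1+1$ and the embedding in a single factor forcing $\bar\rho=0$, i.e. $\ker\bar\rho\neq0$. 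This produces exactly the connected groups $H^0$ named in the three cases.

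\emph{Steps 2 and 4 (normalizers and kernels).} For each $H^0$ I would compute $\Nn_{\hat G}(H^0)$ by the methods already used for Lemma \ref{Lemma.3.1.3}: the invariance principle to restrict the normalizer to the stabiliser of the fixed-point space of $H^0$, together with Schur's Lemma and the known centers $\Zz(\hat G)$ to pin down the centraliser. Thus for the canonical $Sp(1)\subset Sp(2)$ (case 2c) the invariance principle forces $\Nn_{Sp(2)}(Sp(1))=Sp(1)_1\times Sp(1)_2$, so the extension is by a finite subgroup of $Sp(1)_2$; for $SU(2)\subset SU(3)\times T^2$ the centraliser of $SU(2)$ in $SU(3)$ is the circle $\{\mathrm{diag}(z,z,z^{-2})\}$, so $\Nn=S(U(2)\times U(1))\times T^2$; for the triple diagonal $Sp(1)\subset Sp(1)^3\times U(1)$ the centraliser is $\Z_2^3\times U(1)$ and the full normalizer further adds the symmetric group $S_3$ permuting the three factors; and for the diagonal $Sp(1)\subset Spin(5)$ (case 2a) the relevant finite part sits inside $p^{-1}(\Z_2[D_{1,4}])=\Z_2\times\Z_2$. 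Since $H$ must have Lie algebra exactly $\h$, its identity component is $H^0$ and $H=H^0\cdot\Gamma$ for finite $\Gamma$ in the normalizer; Step 4 then reads off the kernel $\Zz(\hat G)\cap H$ from the known centers, noting that $\Zz(H^0)$, whose $\rho$-image is trivial, always lies in the kernel.

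\emph{Step 3 (the main obstacle).} The decisive and most delicate point is to decide which finite $\Gamma$ actually embed into $SO(4)_{3,4}$ under the isotropy representation $\rho$, and this is where I expect the real work to lie. Central elements of $\hat G$ act trivially on $V$ and are therefore always admissible, which is why the surviving $\Gamma$ are finite subgroups of $\Zz(\hat G)$ (together, in case 2c, with the extra $Sp(1)_2$-factor centralising $H^0$). The elements to be excluded are detected by two observations about $SO(4)_{3,4}\subset SO(7)$. First, any admissible element must have determinant $+1$ on $V$; this already kills the odd permutations in $S_3$, since a transposition acts as $-1$ on one $3$-dimensional summand of $V=3+3+1$. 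Second, by Lemma \ref{Lemma.2.1.8} the group $SO(4)_{3,4}$ acts on $\R^7$ through the two inequivalent irreducibles of highest weights $(2,0)$ and $(1,1)$ and so never mixes these two isotypic components; hence any normalizing element whose $\rho$-image interchanges or rotates the two $3$-dimensional summands (the $3$-cycles of $S_3$ in case iii) cannot lie in $SO(4)_{3,4}$. In case (ii) a circle element acts trivially on the $3$-dimensional summand and as the self-dual scalar rotation $z^3$ on the $4$-dimensional summand; lying in $SO(4)_{3,4}$ forces this rotation into $SU(2)_{0,4}$, i.e. $z^3=\pm1$, and since $z=-1$ already lies in $H^0$ the genuinely new finite part reduces to $\Zz(SU(3))=\Z_3$, giving precisely $\Gamma\subset\Zz(SU(3))\times T^2$. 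In practice I would carry this out uniformly by computing $\Nn_{SO(4)_{3,4}}(\rho(H^0))$ and matching it against $\rho\bigl(\Nn_{\hat G}(H^0)\bigr)$; it is this representation-matching inside $SO(4)_{3,4}$, rather than the normalizer computation itself, that pins down the admissible finite extensions and hence the disconnected groups $H$ in the three cases.
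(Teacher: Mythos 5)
Your proposal follows essentially the same route as the paper: the same three candidate Lie algebras, the same connected subgroups $H^0$ singled out via Lemma \ref{Lemma.2.2.2}, normalizers computed by the invariance principle and Schur's Lemma, and a final sieve of the finite extensions $\Gamma$ by testing whether their isotropy images land in $SO(4)_{3,4}$. The one genuine slip is in case (iii): you assert that for the diagonal $Sp(1)\subset Sp(1)^3\times U(1)$ the normalizer ``further adds the symmetric group $S_3$ permuting the three factors.'' Those permutations are \emph{outer} automorphisms of $Sp(1)^3$ and are not induced by conjugation by any element of $\hat G$, so they do not lie in $\Nn_{\hat G}(H^0)$ at all; the paper instead uses $\mathrm{Aut}(Sp(1))=\mathrm{Int}(Sp(1))$ to conclude $\Nn_{\hat G}(H^0)=H^0\cdot \Zz_{\hat G}(H^0)=H^0\cdot\Zz(\hat G)$ directly. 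Because you subsequently exclude the putative $S_3$-elements in Step 3 anyway (by the determinant and isotypic-component arguments), your final list of subgroups $H$ is unaffected, but the normalizer claim as stated is false and the $S_3$ discussion is superfluous. Apart from this, the argument matches the paper's, including the two places where a positive-dimensional piece of the centralizer must be cut down to a finite group: the circle $\Zz_{SU(3)}(SU(2))$ in case (ii), which you correctly reduce to $H^0\cdot\Zz(SU(3))$, and the $SO(2)$-factor of $\Zz_{SO(5)}(SO(3))$ in case (i)(2a), which your general principle (no element of $SO(4)_{3,4}$ mixes the $\R^3$- and $\R^4$-isotypic components) does eliminate, though you state the conclusion $\Gamma\subset p^{-1}(\Z_2[D_{1,4}])$ without spelling this step out.
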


\begin{proof}  In  case (i)  direct computations  on Lie algebras show that  there are  only two  possible   (up to  a conjugation)
embeddings  $\so (3) \to \so (5)\subset gl (\R ^5)$ whose irreducible components are  of  real dimensions 3, 4 respectively. The first one has its adjoint representation  on $V$ as  a  sum of two real irreducible representations of dimension 3 and
one trivial representation, so it is case (2a). The corresponding pair of connected Lie groups is $(Spin (3)\subset Spin (5))$.

The isotropy 
representation of the second embedding of $\h $ into $\so(5)$ is  a sum of one real irreducible  representation of dimension 4 and three
real irreducible representations of dimension 1, so it is case (2c). The corresponding pair of connected Lie groups is 
$(Sp(1)\subset Sp (2))$.

We now examine which disconnected Lie subgroup $H$ in $G$   satisfies  the condition  of case (i). First let us assume that its identity connected component $H^0 = Spin(3) \subset Spin(5)=G$ satisfies the condition of case (ii), associated with possibility (2a). 
To find the normalizer $\Nn _{Spin (5)} Spin (3)$ we project it  into the group $SO(5)$.
The normalizer $\Nn_{SO (5)} (SO (3))$ is $S(O(2) \times O(3))$, according to the invariance principle. The group
$S(O(2) \times O(3))$ is generated  by $SO(2) \times SO(3)$ and  $D_{1,4}$,
moreover $SO(2) \cdot \Z_2 [D_{1,4}]$ is $\Zz_{SO(5)} (SO(3))$. Clearly $(Ad_{D_{1,4}})_{| V}$ belongs to $SO (4)_{3,4}$.
Let $H'$ be the image of the projection of $H$ on $SO(5)$.  
Then $H' = SO (3)\cdot \Gamma$, where $\Gamma \subset (SO(2)\cdot \Z_2 [D_{1,4}])$.  A direct calculation shows that the image of the adjoint 
action of $\Gamma$ on $V$ preserves  the $SO(4)_{3,4}$-invariant subspace $\R ^4 \subset V$, if and only if $\Gamma \subset \Z_2 [D_{1,4}]$. 
(Alternatively we compute that $\Nn_{SO(4)_{3,4}} ( SO (3)_{3,3}) = SO(3)_{3,3} \cdot \Z_2[a]$, where $a$ is the generator
 of the center $\Zz(SO(4))_{3,4}$, which gives us the same conclusion.) 
A direct computation gives the kernel of the action.

Now we assume that $H^0$  satisfies  the condition of  case (ii), associated with possibility  (2c). Using the invariance principle, we observe that the normalizer $\Nn_{Sp(2)} (H^0)$ 
is  $H^0 \times  Sp (1)_2$, where $Sp(1)_2\subset \Zz_{Sp (2)}(H ^0)$. Thus  $H $ is of the form $H^0 \times \Gamma$, where
$\Gamma$ is a  finite  subgroup in $Sp(1)_2$.  
We observe that the image of the adjoint representation of
$H^0 \times Sp(1)_2$ on $V$ coincides with the subgroup $SO(4)_{3,4} \subset \tilde G_2\subset Gl (V)$.
Thus the adjoint representation of $H$ lifts to an embedding of $\rho(H)$ into $SO (4)_{3,4} \subset \tilde G_2 \subset Gl (V)$. This proves Proposition \ref{Proposition.2.2.3}, case (i).

\medskip

In the second case (ii) the corresponding group  $\hat G$ is $SU(3) \times T^2$. A simple calculation using Lemma \ref{Lemma.2.2.2} shows that  there is only one (up to  a conjugation) Lie connected subgroup $H \subset \hat G$ such that $\h =\so(3)$, and the image of the isotropy  representation of the corresponding connected Lie
group $H^0$ is a subgroup of $\tilde G_2$. The group $H^0$ is  $ SU(2)\subset SU(3) \subset \hat G$ which  corresponds to the irreducible complex representation of $\h$  of dimension 2. Its isotropy
representation is a sum of  a real irreducible representation of dimension 4 and three trivial representations, so
it corresponds to case (2c).

To complete our examination of this case (ii) we need only to consider the case of a disconnected subgroup $H$.
Suppose that $H$ is a subgroup of $\Nn_{SU(3)\times T^2} (H^0)$ having
 $H^0$ as its identity connected component. According to the invariance principle, $\Nn _{SU(3)\times T ^2} (H^0)$ is $S(U(2) \times U(1)) \times T^2$.  Thus  $H$ has the form $H^0 \cdot  \Gamma$, where
 $\Gamma $ is a finite subgroup of $\Zz_{SU(3)}  (SU(2))\times  T^2$. Since the action of $\Gamma$ on $V$ has at least  three trivial components  of dimension 1, we conclude
 that $\rho (\Gamma)$ is a subgroup of $\rho (H^0)$. Hence  $\Gamma\subset \Zz (\hat G)\times T^2$. 
This proves Proposition \ref{Proposition.2.2.3}, case (ii).

\medskip

In the last case (iii) the corresponding group $\hat G$ is $Sp (1)\times Sp (1) \times Sp (1) \times U(1)$. 
Using Lemma \ref{Lemma.2.2.2} we conclude that any connected subgroup $H^0$ must be embedded diagonally into $Sp(1) \times Sp(1) \times Sp (1)$. It is easy to check that
the isotropy action of $H^0$ on $V$ is a sum  of two real irreducible representations of dimension 3 and one trivial representation of dimension 1, so it corresponds to case (2a).
 
Now we prove that any disconnected subgroup $H\subset \hat G$ satisfies the condition of case (iii), if  its identity  connected component  $H^0$
does.
Let us compute $\Nn_{\hat G} (H^0)$. Since  $Aut (H^ 0) = Int (H^0)$,  we have $\Nn _{\hat G} (H ^ 0) = H^ 0 \cdot \Zz_{\hat G} (H^0)$.  Clearly $\Zz_{\hat G} (H^0) = H^0 \cdot \Zz(\hat G)$.
Hence the image of $\Nn_{\hat G} (H ^0)$ under its isotropy action on $V$ is equal to  the image of the isotropy action
of $H^0$.  This completes the proof of Proposition \ref{Proposition.2.2.3}.
\end{proof}

{\it  Possibility 3 with $\h = \so (3) + \R$}. Lemma \ref{Lemma.2.2.2} implies that $\g$ cannot contain a commutative ideal of dimension greater than or equal  to 5.  
Since $\dim \g = 11$,  taking into account $\g \supset \h$, we conclude that   $\g$ is one of the following Lie algebras\\
i) $3\so(3)  +\R ^2$,\\
ii) $\so(5) + \R$,\\
iii) $\su (3) + \so (3)$,\\
iv) $\su(3) + \R^3$.

We exclude the last case (iv), since by Lemma \ref{Lemma.2.2.2} the adjoint representation of $\h$  on $V$ restricted to $\so(3)\subset \h$  has no  irreducible component of dimension 5, and if this representation has an irreducible  component of real dimension 4, the  other irreducible subspace has  real dimension 3.

Suppose that  $H_1$ and $H_2$ are connected Lie subgroups in a connected Lie group $G$ such that their Lie algebras $\h_1$ and $\h_2$ satisfy the condition $[\h_1, \h_2] = 0$. Then we denote by $H_1\cdot H_2$ the connected Lie subgroup  in $G$
whose Lie algebra  is  the direct sum $\h_1 + \h_2$. From Lemma \ref{Lemma.2.1.8} we get immediately

\begin{lemma}\label{Lemma.2.2.4}  In  group $SO(4)_{3,4}$ there is no subgroup  of the form
$SO(3) \cdot U(1)$. A  subgroup $SU(2) \cdot U(1)\subset Gl (\R ^7)$ corresponding to a representation $\Pi: \su (2) + \R \to \gl (\R ^7)$ can be seen as a subgroup of $SO(4)_{3,4}\subset Gl (\R ^7)$, if and only if one of the following two conditions (i) and (ii) is  fulfilled.\\
(i)  $\Pi$  
is a sum of one real irreducible component of dimension 4, corresponding to the highest weight $(1,1)$ on its Cartan subalgebra, and one  real irreducible component of dimension 2, corresponding to the highest weight $(0,1)$ on its Cartan subalgebra, and  one trivial component of dimension 1 (so   $\bar \rho(\h)$ is  in  situation (3b)).\\ 
(ii) $\Pi$  is a sum of one real irreducible component  of dimension 4, corresponding to the highest weight $(1,1)$ on its Cartan subalgebra,  and one real irreducible component of dimension 3, corresponding to the highest weight $(2,0)$  on its Cartan subalgebra (so  $\bar \rho(\h)$ is in  situation (3a)).
\end{lemma}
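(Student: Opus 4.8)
The plan is to reduce both assertions to the level of Lie algebras and then keep careful track of the global form ($SU(2)$ versus $SO(3)$) of each connected $\su(2)$-subgroup. Recall from Lemma \ref{Lemma.2.1.8} and the discussion preceding it that $\so(4)_{3,4} = \su(2)_{3,4} + \su(2)_{0,4}$, that its representation on $\R^7$ is the sum of the dimension-3 irreducible component of highest weight $(2,0)$ and the dimension-4 irreducible component of highest weight $(1,1)$, and that, since $SO(4)_{3,4}$ is connected, the conjugacy class of a subgroup is determined by the representation of its Lie algebra. First I would list the $\su(2)$-subalgebras of $\so(4)$ up to conjugacy: the two factors $\su(2)_{3,4}$, $\su(2)_{0,4}$, and the diagonal $\so(3)_{3,3}$. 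The crucial preliminary step is to identify the global form of the corresponding connected subgroup of $SO(4)_{3,4}\subset Gl(\R^7)$: restricting the defining representation, one checks that the central element $-1$ of either factor acts as $-\mathrm{Id}$ on the dimension-4 summand and trivially on the dimension-3 summand, so each factor integrates to a faithful $SU(2)$, whereas the generator of the diagonal lies in the kernel $\Z_2$ of $Sp(1)\times Sp(1)\to SO(4)$ and acts trivially on $\R^7$, so $\so(3)_{3,3}$ integrates to a genuine $SO(3)_{3,3}$. Hence $SO(3)_{3,3}$ is, up to conjugacy, the only connected $SO(3)$-subgroup.

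For the first assertion I would argue directly from this. A subgroup $SO(3)\cdot U(1)$ requires a connected $SO(3)$-subgroup together with a commuting circle, i.e. a subalgebra $\so(3)+\R\subset\so(4)_{3,4}$ with $[\so(3),\R]=0$. By the preceding step the $\so(3)$ must be conjugate to the diagonal $\so(3)_{3,3}$. But the centralizer of the diagonal $\su(2)$ inside $\su(2)+\su(2)$ is trivial: an element $(X,Y)$ commuting with every $(Z,Z)$ forces $X$ and $Y$ into the center of $\su(2)$, so $X=Y=0$. Therefore no nonzero $\R$ commutes with $\so(3)_{3,3}$, and no subgroup of the form $SO(3)\cdot U(1)$ can exist in $SO(4)_{3,4}$.

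For the second assertion I would treat the two implications separately. \textbf{Necessity.} If $SU(2)\cdot U(1)$ embeds in $SO(4)_{3,4}$, then $\R$ centralizes $\su(2)$; since the diagonal has trivial centralizer (and in any case integrates to $SO(3)$, not $SU(2)$), the $\su(2)$ must be conjugate to $\su(2)_{3,4}$ or to $\su(2)_{0,4}$, and in each case the centralizing $\R$ is forced to be a maximal torus of the complementary factor, unique up to conjugacy. One then reads off $\Pi$ by restricting $(2,0)\oplus(1,1)$ to $\su(2)+\R$: when $\su(2)=\su(2)_{3,4}$ the dimension-3 summand stays irreducible of highest weight $(2,0)$ and the dimension-4 summand acquires highest weight $(1,1)$, giving case (ii); when $\su(2)=\su(2)_{0,4}$ the dimension-4 summand gives highest weight $(1,1)$ while the dimension-3 summand, on which $\su(2)_{0,4}$ acts trivially, splits under the complementary torus into a dimension-2 piece and a trivial line, giving case (i). \textbf{Sufficiency.} Conversely, a representation $\Pi$ of the form (i) or (ii) has precisely the weights of one of these two restrictions; since a representation of the compact group $SU(2)\times U(1)$ is determined up to equivalence by its weights, $\Pi$ is conjugate to the corresponding sub-representation of $(2,0)\oplus(1,1)$, so $d\Pi(\su(2)+\R)\subset\so(4)_{3,4}$ and hence $SU(2)\cdot U(1)\subset SO(4)_{3,4}$ by Lemma \ref{Lemma.2.1.8}. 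The faithful dimension-4 component guarantees that the image of $SU(2)$ is genuinely $SU(2)$, consistent with the hypothesis.

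The main obstacle, and the place I would be most careful, is the global bookkeeping separating $SU(2)$ from $SO(3)$: the entire content of the first assertion rests on the two facts that the only $SO(3)$-subgroup is the diagonal one and that the diagonal $\su(2)$ has trivial centralizer, so both the representation-theoretic identification of the three $\su(2)$-classes and the centralizer computation must be executed with care. The remaining work — restricting the standard $(2,0)\oplus(1,1)$ representation to the relevant Cartan subalgebra of $\su(2)+\R$ and matching weights — is routine once the correct $\su(2)$ and its commuting torus have been pinned down.
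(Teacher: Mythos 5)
Your proof is correct and follows essentially the same route as the paper, which simply asserts that Lemma \ref{Lemma.2.2.4} follows ``immediately'' from Lemma \ref{Lemma.2.1.8}: you restrict the defining representation $(2,0)\oplus(1,1)$ of $SO(4)_{3,4}=\chi(Sp(1)\times Sp(1))$ to the three conjugacy classes of $\su(2)$-subalgebras and their centralizers and match weights. The only content you add beyond what the paper treats as immediate is the explicit global-form bookkeeping (the two factors integrate to faithful $SU(2)$'s while only the diagonal gives an $SO(3)$, and the diagonal has trivial centralizer), which is exactly the point on which the first assertion rests, and you get it right.
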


Using Lemma \ref{Lemma.2.2.4}  (or  Lemma \ref{Lemma.2.2.2})  we also exclude  the first case (i) of possibility 3 by looking at all possible  embeddings of
the summand $\h \subset \so(3)$ into $\g= 2\so(3) +\R^2$.  It remains to consider cases (ii) and (iii).

\begin{proposition}
\label{Proposition.2.2.5} Suppose that $\hat G/H$  admits a $\hat G$-invariant  $\tilde G_2$-structure such that their corresponding Lie algebras $(\h\subset \g)$ are  in possibility 3. Then one of the following cases happens.  (A detailed description of $H$ will be given in the proof.) 

In case (ii)  with $\hat G = Sp(2) \times U(1)_2$, the Lie subgroup $H$  is   $SU(2) \cdot U(1)_{k,l}\cdot \Gamma$ with $k\not = 0$, $(k,l) = 1$, and
$\Gamma$ is a finite subgroup of $U(1)_2$. The kernel of
the action is $\Z_2[\Zz(Sp(2))]\times \Gamma$. 

In case (iii) with $\hat G = SU(3) \times SU (2)$,  the Lie subgroup $H$ is one of the following  forms.\\
- $H = SU(2)_{2,0} \cdot U(1)_{k,l} \cdot \Gamma$,  where  $\Gamma$ is a finite subgroup in
 $\Zz (\hat G)$,  (so $\bar \rho(\h)$  is in case (3b)),  moreover  $kl \not=0$. We have $\Zz(\hat G) \cap H^0= Id$, if $(2k-3l)(4k-3l) \not = 0$. In general $\Zz(\hat G) \cap H$  can be any subgroup of $\Zz(\hat G) = \Z_3 \times \Z_2$ depending on $\Gamma$  and $k,l$.\\
- $H =SU(2)_{2,3}\cdot U(1)_{1,0}\cdot\Gamma$,  where $\Gamma \subset \Zz(\hat G)$, (so $\bar \rho(\h)$ is in case  (3a)). The kernel of the $\hat G$-action  is $\Z_2[\Zz(\hat G)\cap H^0)] \cdot \Gamma$.
 \end{proposition}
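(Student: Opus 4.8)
The plan is to follow the same four-step method laid out in the paragraph preceding Proposition \ref{Proposition.2.2.1}, now specialized to the two surviving Lie algebra pairs in possibility 3. We have already reduced to the algebras $\g = \so(5)+\R$ (case (ii)) and $\g = \su(3)+\so(3)$ (case (iii)), with $\h = \so(3)+\R$. The first step is to pin down, up to conjugacy, the connected subgroup $H^0$ for each case by examining how $\h$ embeds into $\g$, subject to the constraint from Lemma \ref{Lemma.2.2.4} that the isotropy representation $\bar\rho(\h)$ on $V$ must be realizable inside $\so(4)_{3,4}$, i.e.\ the $\so(3)$-summand acts with the weight pattern of situation (3a) or (3b). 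In case (ii) the $\so(3)$ must sit inside $\ssp(2)=\so(5)$ as the canonical $\ssp(1)\subset\ssp(2)$ (so that its $V$-action has a $4$-dimensional irreducible piece), giving $H^0 = SU(2)$, and the $\R$-factor is a one-parameter subgroup $U(1)_{k,l}$ threading the two $U(1)$-directions available (one from a torus in $Sp(2)$, one from the ambient $U(1)_2$); the coprimality $(k,l)=1$ and $k\neq 0$ are forced by requiring $\ker\bar\rho=0$ together with the weight-$(0,1)$ condition of Lemma \ref{Lemma.2.2.4}(i). In case (iii) the $\su(2)$ lands in $\su(3)$ either as $SU(2)_{2,0}$ (the weight-$2$ irreducible three-dimensional representation, giving situation (3b)) or, diagonally combined with the $\so(3)$ ideal, as $SU(2)_{2,3}$ (giving situation (3a)); the accompanying $U(1)_{k,l}$ is read off analogously.

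The second step is to compute the normalizer $\Nn_{\hat G}(H^0)$ in each case, which governs the possible disconnected extensions $H = H^0\cdot\Gamma$. Here I would lean on the invariance principle and on Schur's Lemma together with its consequence, exactly as in the proofs of Propositions \ref{Proposition.2.2.1} and \ref{Proposition.2.2.3}. Since each $H^0$ contains a subgroup acting $\C$-irreducibly on a complex subspace of the relevant representation, the centralizer $\Zz_{\hat G}(H^0)$ collapses to a product of the center $\Zz(\hat G)$ with the leftover torus factor $U(1)_2$ (case (ii)) or simply $\Zz(\hat G)$ (case (iii)); combined with $\Nn/\Zz\subset\operatorname{Aut}$ this yields $\Nn_{\hat G}(H^0)$ as $H^0$ extended by a torus and a finite central group. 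Consequently any admissible $H$ has the stated form $SU(2)\cdot U(1)_{k,l}\cdot\Gamma$ with $\Gamma$ finite and central (or torus-valued in case (ii)).

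The third step is the verification that $\rho(H)$ actually lifts into $SO(4)_{3,4}\subset\tilde G_2$, not merely that $H^0$ does. Because the extra generators from $\Gamma$ are central, their isotropy action on $V$ commutes with $\rho(H^0)$ and therefore lands in the normalizer $\Nn_{SO(4)_{3,4}}(\rho(H^0))$; a short computation with the weight decomposition confirms that this normalizer is itself inside $SO(4)_{3,4}$, so no obstruction arises. The final step is the center computation $\Zz(\hat G)\cap H$ and the identification of the kernel of the $\hat G$-action, using $\Zz(Sp(2))=\Z_2$, $\Zz(SU(3))=\Z_3$, $\Zz(SU(2))=\Z_2$. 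This is where the arithmetic of the winding numbers $(k,l)$ enters: one must track how $U(1)_{k,l}$ meets $\Zz(\hat G)$, which is exactly what produces the condition $(2k-3l)(4k-3l)\neq 0$ for $H^0$ to meet the center trivially in the (3b) subcase, and the factor $\Z_2[\Zz(\hat G)\cap H^0]$ in the (3a) subcase.

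The main obstacle I expect is the third and fourth steps in case (iii), specifically the bookkeeping of how the one-parameter group $U(1)_{k,l}$ intersects the finite center $\Z_3\times\Z_2$ as $(k,l)$ varies. Unlike the connected-subgroup analysis, which is forced by the weight data, the intersection $\Zz(\hat G)\cap H$ genuinely depends on number-theoretic coincidences among $k$, $l$, and the orders $3$ and $2$; isolating the generic condition $(2k-3l)(4k-3l)\neq 0$ and then cataloguing the degenerate possibilities (where the intersection can be any subgroup of $\Z_3\times\Z_2$) is the delicate part. Everything else is a routine application of the machinery already used in the earlier propositions.
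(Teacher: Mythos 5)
Your overall scaffolding (connected subgroup from the weight data, then normalizer, then the lift into $SO(4)_{3,4}$, then the center computation) is the same as the paper's, and your case (ii) and the identification of the two sub-cases $SU(2)_{2,0}\cdot U(1)_{k,l}$ and $SU(2)_{2,3}\cdot U(1)_{1,0}$ in case (iii) are fine. But there is a genuine gap in your second step for case (iii), sub-case (3b). You invoke Schur's Lemma to conclude that $\Zz_{\hat G}(H^0)$ ``collapses to $\Zz(\hat G)$,'' but the hypothesis of Schur's Lemma fails here: $H^0=SU(2)_{2,0}\cdot U(1)_{k,l}$ does not act irreducibly on the relevant complex space, and its centralizer in $SU(3)\times SU(2)$ is in fact the two-torus $U(1)_1\times U(1)_2$ (with $U(1)_1=\Zz_{SU(3)}(SU(2))$ and $U(1)_2$ a maximal torus of the $SU(2)$ factor), which strictly contains $U(1)_{k,l}$. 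Consequently $\Nn_{\hat G}(H^0)=SU(2)_{2,0}\cdot U(1)_{k,l}\cdot U(1)\cdot\Zz(\hat G)$ with $U(1)\subset SU(2)$, not $H^0\cdot\Zz(\hat G)$, and candidates for $\Gamma$ include non-central elements of that extra circle. (One also has to rule out the Weyl reflection $A_{(12)}$ of the $SU(2)$ factor, which sends $U(1)_{k,l}$ to $U(1)_{k,-l}$ and so fails to normalize $H^0$ when $kl\neq 0$; this is why $kl\neq 0$ matters at this stage.)

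Because of this, your third step becomes vacuous exactly where the real work is: you argue that central $\Gamma$ causes no obstruction, but the point is to \emph{exclude} the non-central elements $x$ of the extra $U(1)$. The correct argument is the converse of the one you sketch: if $x\in H\cap U(1)$ then $\rho(x)$ commutes with $\rho(H^0)$, and the centralizer of $\rho(H^0)$ inside $SO(4)_{3,4}$ is forced by the weight decomposition to be $\rho(U(1)_{k,l})$ itself; hence $\rho(x)\in\rho(U(1)_{k,l})$ and $x\in H^0\cdot\Zz(\hat G)$, i.e.\ $x$ is central after all. Without this exclusion step you cannot conclude that $\Gamma\subset\Zz(\hat G)$ in sub-case (3b), which is precisely the assertion of the proposition. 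The rest of your outline (the arithmetic of $U(1)_{k,l}\cap\Zz(\hat G)$ producing $(2k-3l)(4k-3l)\neq 0$, and the invariance-principle computation in sub-case (3a)) is consistent with the paper.
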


\begin{proof} Let us consider case (ii) with $\g = \so(5)+ \R$. We can assume that the projection $\Pi _1 (\R)$  of the summand $\R\subset \h$  on $\so(5)$ is
nonzero, otherwise the kernel of the isotropy action of $\h$ contains $\R$, and the action of $\bar \rho(\h)$ is not faithful.

A direct computation shows that the embedding of $\so(3)$ to $\so (5)$ is  associated with a real irreducible representation of $\so(3)$ of dimension  4 (complex dimension 2),  and the  projection $\Pi_1 (\h)$ is the Lie algebra of the centralizer $\Zz_{\so (5)} (\so (3))$.  A  subgroup $SU(2) \times U(1)\subset
 Sp (2) \times U(1)_2$  having this Lie algebra is  determined by 2 integers $(k, l)$ which are the coordinates of the component $U(1)$  w.r.t. $ U(1) _ {1 } \subset \Zz_{Sp (2) } (SU (2))$  and $U(1) _ 2$. We denote this  subgroup by $SU(2) \cdot U(1) _{k, l}$. By our condition
 $k \not= 0$ and  $(k, l) = 1$. We  check easily that the associated isotropy representation of $SU(2) \cdot U(1)_{k,l}\subset  Sp(2)\times U(1)_2$  corresponds to case (i) in Lemma \ref{Lemma.2.2.4}. 
 
Now let us find all Lie subgroups $H$ in  $\hat G$ satisfying the condition of Proposition \ref{Proposition.2.2.5}, case (ii). By our consideration above it follows that  the identity connected component $H^0$ of $H$ is embedded in $Sp(2)\times U(1)_2$ as $SU(2) \cdot U(1)_{k,l}$. 
Clearly $U(1)_2\subset \Nn_{\hat G}  (H^0) $. Using the invariance principle we  conclude that $\Nn_{\hat G} (H^0) = H^0 \times U(1)_2$.  This proves the first assertion  of Proposition \ref{Proposition.2.2.5}, case (ii). The second assertion
follows by a direct calculation.

Now let us consider case (iii)  with $\g = \su(3) + \so(3)$. Denote by $\Pi_1$ the projection of $\h$ on the summand $\su(3)\subset \g$ and by
$\Pi _2$ the projection of $\h$ on the summand $\so(3) \subset \g$. Using Lemma \ref{Lemma.2.2.4} we conclude that  $\Pi_1 (\R)$ is  nonzero, otherwise the restriction of  the isotropy action to the summand $\R\subset \h$ would have at least 5 trivial components. Repeating this argument, we conclude that  $\Pi_1 (\so (3))$ is also nonzero.
Clearly the embedding  of  $\Pi_1(\so(3))$ into  $\su(3) \subset \gl (\C ^3)$ must  correspond to its  complex irreducible representation of 
complex dimension 2, because its image commutes with  $\Pi _1 (\R)$. 
Hence the projection $\Pi _1 (\h)$  is defined uniquely up to   automorphisms of $\su (3)$.
Hence the embedding  of the component $ U(1)$ into $ SU (3) \times  SU(2) = G ^{sc}$ is  characterized by
two integers $(k, l)$ which are the coordinates of  $U(1)$ w.r.t.  $U(1)_1$ and $U(1)_2$, where $U(1) _1 = \Zz_{SU(3)} ( SU(2) )$ and
$U(1)_2$  being a  maximal torus of $SU(2)$. Further we  observe that there are two possible sub-cases. 

If $\Pi _2 (\so(3))$ is empty, then $k \not = 0$ and $l \not = 0$. Denote by $SU(2)_{2,0}\cdot U(1)_{k, l}$ the  connected Lie subgroup  of $SU(3) \times SU(2)$  having  Lie algebra $\h$ with this property. Its isotropy representation corresponds to case (3b) in Lemma \ref{Lemma.2.2.4}.(i). 

If $\Pi_2(\so(3))$ is not empty, then $l = 0$, and hence $k = 1$.  Denote by $SU(2)_{2,3}\cdot U(1)_{1,0}$ the Lie subgroup of $SU(3) \times SU(2)$ having Lie algebra $\h$ with this  property. Its isotropy representation corresponds to
case (3a), see also Lemma \ref{Lemma.2.2.4}.(ii).

Now we consider  disconnected Lie subgroups $H$  whose Lie algebra $\h$ is in case (iii), the first sub-case (3b).  Denote by  the same $\Pi_i$ the lift of $\Pi_i$  from $\g$ to $\hat G$.  
Since $k \cdot l \not = 0$, we have
$$\Pi_2[\Nn _{\hat G} (SU(2)_{2,0}\cdot U(1)_{k,l})] \subset
\Nn_{SU(2)} \Pi_2 (U(1)_{k,l})  =\Pi_2 (U(1)_{k,l})\cdot \Z_2[A_{(12)}].$$
Here
\begin{math}
A_{(12)} = \left( \begin{array} {cc}
0 &\sqrt{-1}  \\
\sqrt{-1}&  0 \\
\end{array}
\right)\in SU(2).
\end{math}
 
But $Ad_{A_{(12)}}$ maps $U(1)_{k,l}$ to $U(1)_{k,-l}$. Now it is easy to see that $\Nn_{\hat G}( SU(2)_{2,0}\cdot U(1)_{k,l})=  SU(2)_{2,0}\cdot U(1)_{k,l} \cdot U(1) \cdot  \Zz (\hat G)$, where $U(1)\subset SU(2)\subset \hat G$.

Let $x\in  H \cap  U(1)$.  Since $x$ commutes with $H^0$, if $Ad_x$  belongs to  $SO(4)_{3,4}$, the image $Ad_x$  must belong to
 $Ad_{U(1)_{k,l}}$. Hence $x \in \Zz (\hat G)$.  This proves $H \subset H^0 \cdot \Zz(\hat G)$.
 A direct calculation gives  the kernel of the $\hat G$-action in this case. 
 
In the next sub-case (3a), using the invariance principle, we conclude that  $\Nn_{\hat G}(SU(2)_{2,3}\cdot U(1)_{1,0}) = SU(2)_{2,3}\cdot U(1)_{1,0}\cdot \Zz(\hat G)$.  A direct computation completes the proof of Proposition \ref{Proposition.2.2.5}.
\end{proof} 

{\it Possibility 4 with $\h = \R ^ 2$}.
If $rk\, \g \ge 4$, then the dimension of the fixed-point of the action of $\rho (H)$  on $V$ is at least 2 which
does not agree with the action of the maximal torus of $SO(4)_{3,4}$ on $\R ^7$.
Thus $ \g$ must be one of the following Lie algebras\\
i) $\so (3) + \so(3) + \so (3)$,\\
ii) $\su (3) + \R$.

In case (ii) instead  of working with  $\hat G = SU(3) \times U(1)$  it is more convenient to work with $G=U(3) = \hat G/ D_3$, where $D_3 = \{ (g, g ^{-1}) |  g = diag (e ^{ ({\sqrt{-1}2k\pi\over 3}}, e^{{\sqrt{-1}2k\pi\over 3}}, e^{{\sqrt{-1}2k\pi\over 3}}), k =1,2, 3\}$.
We note  that there is a 1-1 correspondence between  connected  Lie subgroups $H$ in $\hat  G$ and  connected Lie subgroups  $H'$  in $U(3)$  having  the same Lie algebra  $\h \subset  \g  = \su (3) + \R$.  Furthermore,     $\Nn _{\hat G}(H) = \pi ^{-1} (\Nn _{U(3)} ( H'))$, where $\pi : \hat G \to  U(3)$ is the natural projection. 
 Thus  it suffices to work with $G=U(3)$. As we will see    below, $\Nn_G(H')$  is  generated by   $H'$ and  a subgroup  $\Gamma \subset \Zz(G)$.   Hence, to get a full list of  a classification in  case (ii), working with $G=U(3)$ instead  of $\hat G$, we need 
examine  only one extra  possibility, if  the   corresponding connected Lie subgroup $H \subset  \hat G$ contains $D_3$.

\begin{proposition}\label{Proposition.2.2.6} 
In case (i) with $\hat G = SU (2) \times SU(2) \times SU(2)$, the Lie subgroup   $H$  is of the form $U(1) _{0, 1, - 1} \cdot U(1) _{1, 0, - 1}\cdot \Gamma$,
where $\Gamma \subset \Zz(\hat G)\times \Z_2[ (A_{(12)}, A_{(12)}, A_{(12)})]$. The kernel of the $\hat G$-action is $\Z_2[-Id, -Id, Id] \cdot ( \Gamma\cap \Zz (\hat G))$.\\
In case (ii) with $ G = U(3)$, the Lie subgroup $H$ is of the form of $U(1)_{k,k,k+1}\cdot U(1)_{m, m+1, m+1} \cdot \Gamma$, where $\Gamma \subset \Zz ( G)$. The kernel of the $ G$-action 
is $\Gamma$.\\
A detailed description of $H$ will be given in the proof below.
\end{proposition}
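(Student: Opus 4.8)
The plan is to run the four-step scheme laid out before the proposition, treating the two admissible algebras $\g=\so(3)+\so(3)+\so(3)$ and $\g=\su(3)+\R$ in parallel: first pin down the connected isotropy group $H^0$, then compute $\Nn_{\hat G}(H^0)$, then determine which disconnected extensions $H=H^0\cdot\Gamma$ satisfy $\rho(H)\subset SO(4)_{3,4}$, and finally compute the kernel. The common starting point is that $\bar\rho(\h)=\R^2$ is a torus lying in the maximal torus $T^2$ of $SO(4)_{3,4}$, so I would conjugate $\h$ into a maximal torus $\t$ of $\g$ (legitimate since $\h$ is a torus and $\g$ has rank $3$ in both cases). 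By the description of $SO(4)_{3,4}$ preceding Lemma~\ref{Lemma.2.1.6}, its maximal torus acts on $\R^7=\R^3\oplus\R^4$ with a single fixed line and three rotation planes whose frequencies are, in suitable coordinates $(\sigma,\tau)$, equal to $2\sigma$, $\sigma+\tau$, $\sigma-\tau$; in particular one frequency is the sum of the other two and no two are proportional as functionals. Since the isotropy action of $\h\subset\t$ on $V$ splits as the leftover Cartan directions (fixed) together with rotation planes carrying the off-diagonal root frequencies of $\g$, the lifting condition $\rho(\h)\subset SO(4)_{3,4}$ becomes the arithmetic requirement that $V$ have a one-dimensional fixed space and that the three nonzero off-diagonal frequencies be pairwise non-proportional on $\h$ and satisfy a single relation $\omega_i=\pm\omega_j\pm\omega_k$.

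For Case (i) with $\hat G=SU(2)^3$ the off-diagonal frequencies are $2\theta_1,2\theta_2,2\theta_3$, and the unique relation they satisfy on the plane $\h$ is the one given by the functional $(a,b,c)$ cutting out $\h$. Matching against the pattern above forces $(a,b,c)\propto(\pm1,\pm1,\pm1)$ with all entries nonzero, and the Weyl group $(\Z_2)^3$ of sign changes $\theta_i\mapsto-\theta_i$ reduces this to $\h=\ker(\theta_1+\theta_2+\theta_3)$, i.e. $H^0=U(1)_{1,0,-1}\cdot U(1)_{0,1,-1}$. For the normalizer I would note that $H^0$ is a regular torus, so $\Zz_{\hat G}(H^0)$ is the full maximal torus $T^3$; since $T^3=\Zz_{\hat G}(H^0)$ is preserved by $\Nn_{\hat G}(H^0)$, we get $\Nn_{\hat G}(H^0)/T^3\hookrightarrow (\Z_2)^3$, and only the sign changes fixing $\theta_1+\theta_2+\theta_3$ survive, namely $\{(1,1,1),(-1,-1,-1)\}$, giving $\Nn_{\hat G}(H^0)=T^3\cdot\Z_2[(A_{(12)},A_{(12)},A_{(12)})]$. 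A disconnected $H$ is then $H^0\cdot\Gamma$ with $\Gamma$ finite in this normalizer: central elements act trivially under $\rho$ and are automatically admissible, while the only non-central generator $(A_{(12)},A_{(12)},A_{(12)})$ sends each $\theta_i\mapsto-\theta_i$, hence reverses all three rotation planes and fixes the fixed line, so one checks that its $\rho$-image sits in the normalizer of $T^2$ inside $SO(4)_{3,4}$. This yields $\Gamma\subset\Zz(\hat G)\times\Z_2[(A_{(12)},A_{(12)},A_{(12)})]$, and the kernel is the normal core of $H$, which for $SU(2)^3$ is $\Zz(\hat G)\cap H$; I would compute $\Zz(\hat G)\cap H^0$ from the exponential lattice of $\h$ and adjoin $\Gamma\cap\Zz(\hat G)$.

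For Case (ii) I would pass to $G=U(3)$ as justified in the remark preceding the proposition. Here the off-diagonal frequencies are the root differences $\alpha_1-\alpha_2,\ \alpha_1-\alpha_3,\ \alpha_2-\alpha_3$, which already satisfy $(\alpha_1-\alpha_3)=(\alpha_1-\alpha_2)+(\alpha_2-\alpha_3)$ identically; hence every two-torus $\h\subset\t$ on which all three roots are nonzero and pairwise non-proportional automatically lifts into $SO(4)_{3,4}$. Choosing a lattice basis of $\h$ adapted to the root system (so that each generator annihilates one root-plane) and normalizing by the Weyl group $S_3$ and coprimality conditions produces the family $H^0=U(1)_{k,k,k+1}\cdot U(1)_{m,m+1,m+1}$, whose generators kill the $(12)$- and $(23)$-planes respectively. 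Because this lattice is fixed by no nontrivial permutation, the normalizer is $\Nn_{U(3)}(H^0)=H^0\cdot\Zz(U(3))$ with $\Zz(U(3))=U(1)$, the disconnected extensions are $H=H^0\cdot\Gamma$ with $\Gamma\subset\Zz(U(3))$ finite, and since $\Zz(U(3))\cap H^0$ is trivial the kernel of the $U(3)$-action equals $\Gamma$.

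The genuinely delicate steps are the two normalizer computations and, above all, the verification in Case (i) that the reflection $(A_{(12)},A_{(12)},A_{(12)})$ really maps into $SO(4)_{3,4}$ under $\rho$ and not merely into $SO(7)$; this is where the precise structure of the torus-normalizer of $SO(4)_{3,4}$, together with the correct matching of the $\R^3$- and $\R^4$-factors (by Lemma~\ref{Lemma.2.1.8}), must be used. The center bookkeeping in Case (i), where $\Zz(\hat G)\cap H^0$ already sits nontrivially inside the connected part, also requires care and is precisely the reason Case (ii) is handled with $U(3)$ rather than $SU(3)\times U(1)$, where the center is a single circle transverse to $H^0$.
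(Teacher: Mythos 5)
Your overall strategy -- matching the weight system of $\bar\rho(\h)$ against the weights $\{0,\pm\mu_1,\pm\mu_2,\pm(\mu_1+\mu_2)\}$ of the maximal torus of $SO(4)_{3,4}$, then computing $\Nn_{\hat G}(H^0)$ through the Weyl group and testing which normalizer elements land in $SO(4)_{3,4}$ -- is exactly the paper's, but two of your steps do not go through as written. In Case (i), after establishing $\Nn_{\hat G}(H^0)=T^3\cdot\Z_2[(A_{(12)},A_{(12)},A_{(12)})]$ you pass directly to ``central elements and the one non-central generator,'' but a finite subgroup $\Gamma$ of this normalizer may perfectly well contain torsion elements of $T^3$ that are neither central nor in $H^0$ (e.g.\ $(i,1,1)$). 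To exclude them you must prove the statement the paper records explicitly: for $x\in T^3$, $\rho(x)\in SO(4)_{3,4}$ forces $x\in H^0\cdot \Zz(\hat G)$. (One way: $\rho(x)$ commutes with $\rho(H^0)$, which is already a maximal torus of $SO(4)_{3,4}$, so $\rho(x)$ lies in that torus, i.e.\ $x\in H^0\cdot\ker\rho=H^0\cdot\Zz(\hat G)$; alternatively, such an $x$ would have a fixed subspace of dimension $\ge 5$, impossible for a nontrivial element of $SO(4)_{3,4}$.) Without this your conclusion $\Gamma\subset\Zz(\hat G)\times\Z_2[(A_{(12)},A_{(12)},A_{(12)})]$ is unproved.

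In Case (ii) there are two problems. First, your normalizer claim ``this lattice is fixed by no nontrivial permutation'' is false for the singular tori in the family: when two of the coordinates $-(m+1),\,m-k,\,k$ of the annihilating character coincide, a transposition in $\Sigma_3$ does normalize $H^0$, and one must then check -- as the paper does -- that its isotropy image acts as $\mathrm{diag}(1,1,-1)$ on the $3$-dimensional fixed block and hence does \emph{not} lie in $SO(4)_{3,4}$; you skip this case entirely, so your description of the admissible $\Gamma$ is incomplete as an argument. Second, your admissibility criterion (the three restricted roots nonzero and pairwise non-proportional) is strictly weaker than what your normalization step needs: for a torus such as $\ker(1,1,3)^0\subset T^3\subset U(3)$ the three restricted roots are nonzero, pairwise non-proportional and still satisfy $\omega_3=\omega_1+\omega_2$, so by your criterion it is admissible, yet the two circles annihilating the root planes generate only an index-$5$ sublattice of its cocharacter lattice, so no ``adapted basis'' exists and the torus is not of the form $U(1)_{k,k,k+1}\cdot U(1)_{m,m+1,m+1}$. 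The point you are missing is that for such tori $\rho|_{H^0}$ has a nontrivial (central) kernel, and it is only after dividing out this kernel of the action and re-identifying the quotient with $U(3)$ that one returns to the stated family; equivalently, the paper's weight-matching of the individual circles $\rho^{-1}(U(1)_{p,q})$ tacitly assumes $\rho|_{T^2}$ injective. You must either impose that injectivity and justify it by the effectiveness reduction, or handle the extra tori explicitly; as written, your Case (ii) proves a classification that your own lifting criterion contradicts.
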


\begin{proof}
Let us fix  a  subgroup $SO(2)_{2,2} \subset SO(3)_{3,3}\subset SO(4)_{3,4}$. We can choose a subgroup $U(1)_{0,4} \subset SU(2)_{0,4} \subset SO (4)_{3,4}$ such that  these subgroups
are generators of  a maximal torus    of $SO(4)_{3,4}$. 
Any  subgroup $U(1)$ in this torus shall be  denoted by $U(1)_{p,q}$   with respect to this lattice.

In case (i) let us fix a  maximal torus $ U(1) _1 \times U(1)_2 \times U(1)_3 $ of $\hat G = SU(2)_1 \times SU(2)_2 \times SU(2)_3$ such that $U(1)_i \subset SU(2) _i$.  Let $ T ^2 $ be a torus  in  $\hat G$  such that $\rho (T^2) \subset 
SO(4)_{3,4}$. W.l.g.  we can assume that $\rho ( T ^2) = U(1)_{0,4} \cdot SO(2)_{2,2}$. Let $U(1)_{k,l, m}$ be the preimage $\rho ^{-1} (U(1)_{p,q})$, where $(k,l,m)$ are the coordinates  with respect to $U(1)_i$. The weights of the  adjoint action of $\rho ^{-1} (U(1)_{p,q})$ on $V$ are $(1, \exp\pm 2k\sqrt{-1}\theta, \exp\pm 2l\sqrt{-1}\theta, \exp \pm 2m\sqrt{-1}\theta)$
which must coincide with the  weights of the  representation of $U(1)_{p,q}$ on $\R^7$ which are 
$(1,\exp \pm \sqrt{-1}p\theta, \exp\pm \sqrt{-1}q\theta,\\
\exp\mp \sqrt{-1}(p +q)\theta)$. Taking into account that the isotropy action of $U(1)_i$ on $V_i \subset \su(2)_i$ is a double covering,  we conclude that $k = \pm p, l = \pm q, m= \mp (p +q)$.   Each choice of the sign  of the weights
of the action of the torus on $V ^7$ corresponds to a different solution  of the coordinates $(k,l,m)$  of $T^2$.  Observing that $T^2$  is  invariant under  the  inverse map $ x\mapsto x ^{-1}$, we have  actually only  four different
solutions for the coordinates $(k,l,m)$. Using the  permutations between $SU(2)_i$, we get only  three different solutions for  $T^2$:\\
 $T ^2_1 = U(1) _{0, 1, -1} \times U(1)_{1, 0, -1}$, $T^2_2 = U(1) _{0,1,1} \times U(1)_{1,0,1}$, $T^2_3 = U(1)_{0,-1, 1} \times U(1)_{1,0,1}$.\\

It is easy to see that $T^2_2  $ and $T^2 _3$ are equivalent up to  automorphism of $\hat G$.
Since we can change the orientation of each $U(1)_i \subset SU(2) _i$,  the  tori $T^2_1$ and $T^2_2$ are  equivalent.
Thus  up to   conjugation by automorphism of $\hat G$, there is only one choice of $T^2$  satisfying our condition.

To complete  the examination of case (i) we need to find all disconnected  Lie  subgroup  $H$ whose  identity connected Lie component $H^0$
is the  torus $U(1) _{0,  1, -1} \cdot U(1) _{1, 0, - 1}$.  Clearly $T^3 = U(1)_1 \times U(1)_2 \times U(1)_3 \subset \Nn_{\hat G} (H^0)$. Considering the projection of $\Nn_{\hat G}(H^0)$ on each  factor $SU(2)_i$ we conclude that $\Nn_{\hat G} (H ^0) \subset T^3 \cdot (\Z_2[A_{(12)}]) ^3 $.

A direct  calculation  shows that $\Nn_{\hat G} (H ^0) = T^3 \cdot \Z_2[(A_{(12)},  A_{(12)}, A_{(12)})]$.
Hence $H = H^0 \cdot \Gamma$, where $\Gamma$ is a finite subgroup in $ T^3 \cdot \Z_2[(A_{(12)}, A_{(12)}, A_{(12)})]$.
Further we note that the element $Ad_{( A_{(12)}, A_{(12)},A_{(12)})}$ in $SO(V^7)$
is  $ D_7 = diag (-1, 1, -1, 1-1, 1, -1)$, which belongs to $SO(4)_{3,4}$.

Clearly the element $Ad_x$ in $SO(V^7)$, where  $x\in T ^3$,  belongs to $SO(4)_{3,4}$, if and only if $x \in H^0 \cdot \Zz(G)$.
 Thus  the image  $\rho(H ^0 \cdot \Gamma)$ belongs to $SO(4)_{3,4}$,  if and only if $\Gamma \subset \Zz(G) \cdot \Z_2[(A_{12}, A_{(12)}, A_{(12)})]$. A direct computation  yields the second statement of Proposition \ref{Proposition.2.2.6}.

It remains to consider case (ii) with the corresponding group $ G = U(3)$.  
   Now we use
the notations $U(1)_1 , U(1)_2, U(1) _ 3$ for the  generators of the maximal torus  of $U(3)$.  Suppose that
there is $T ^2 \subset U(1)_1 \times U(1) _2 \times U(1)_3$ such that $\rho ( T^2) = U(1)_{0,4}\cdot SO(2) _{2,2} \subset
SO(4) _{3,4}$.  The  weights  of  the isotropy action of $U(1)_{k,l,m}$  is $(1, \exp\pm  \sqrt{-1}(k-l), \exp \pm (l-m), \exp \sqrt{-1}(k-m))$ and
the weights of the  representation of $U(1)_{p,q}$ are $(\exp \pm  p, \exp\pm q, \exp\mp (p +q))$.
Thus $T^2 $  must be $U(1)_{k, k, k +1}\cdot U(1)_{m,m+1, m+1}$ or
$U(1)_{k,k,k-1}\cdot U(1)_{m,m-1, m-1}$. These  two families of solutions are actually   mirror  identical.

Now let us find all disconnected Lie group $H$ whose identity component $H^0$
 is conjugate to  the torus $T ^2_{k,m} =U(1)_{k, k, k+1} \cdot U(1) _{m, m+1, m +1}$.  Since $\h$ contains a  regular element, it follows that  the identity connected component of $\Zz_{\hat G} (H^0)$ is a torus $T ^3$. Denote by $lT^3$ the Lie algebra of  $T^3$. Using the invariance principle
applying to $W _1 ^\perp = lT ^3 \subset \g$, we conclude that $\Nn _G (H ^0)$  leaves $T^3$ invariantly. Hence
$\Nn_{\hat G} (H ^0)$ is a subgroup of
$\Nn_{\hat G }( T ^3)  = T ^3 \cdot \Sigma_3$, where $\Sigma _3$ is the Weyl group generated by   two elements  of
order 3 and of order 2 in $SU(3)$.

Since $(Ad_{ \Sigma _3})_{| \g} \subset  SO (\g)$, an element  $x\in \Sigma _3$ belongs to the normalizer $\Nn_G ( H^0)$,  if and only if it  leaves the orthogonal complement $\langle(-(m +1), (m- k) , k )\rangle _\R$ of  $\h $ in $lT ^3= \R ^3$ invariantly.   The generators of $\Sigma_3$   are

\begin{math}
A_{(123)}  =  \left(\begin{array}{ccc}
0 & 0 & 1\\
1& 0 & 0\\
 0& 1 & 0 \\
\end{array}\right)\text{ and } B_{(23)} = \left  (\begin{array}{ccc}
-1 & 0 & 0\\
 0 & 0&   1\\
 0 & 1 & 0\\
\end{array}\right).
\end{math}

They act on $T ^3$ by permuting coordinates $k,l,m$. 
We conclude that
$\Nn_{\hat G} ( H ^0) = T ^3$, if $T^2_{k,m}$ is regular, i.e. if  all three coordinates  $-(m+1), (m-k), k$  are mutually different.  If $T_{k,m}$ is singular, $\Nn_{\hat G} ( H^0) = T ^3 \cdot \Gamma_0$, where  $\Gamma _0 \subset \Sigma_3$ and $Ad_{\Gamma _0}$ permutes two   equal  coordinates of $(-(m +1), (m- k) , k )$.
Arguing as in case (i), we conclude that  for regular tori $T^2_{k,m}$ we have
$H = H ^ 0 \cdot \Gamma$, where $\Gamma \subset \Zz (G)$. In this case
$\Zz (G) \cap H = \Gamma$.   For a singular torus $T^2_{k,m}= H^0$ we need  also to consider   the case, when $H$ contains an element  of $\Sigma _3$.  A direct computation shows that the action $Ad_x$, $x\in \Sigma_3,$ 
permuting two coordinates  of $T ^3$ acts on the invariant subspace $\R ^3 \subset \R ^7$ as $ (1, 1, -1)$, hence
it does not belong to $SO(4)_{3,4}$. Thus this case cannot happen. This completes our proof.
\end{proof}

{\it Possibility  5 with $\h = \R $}. Clearly  $rk\, \g\le 5$, since the action of any
group $U(1) \subset\tilde G_2$ on $ \R^7$ is non-trivial. Since $\dim \g = 8$, we conclude that    $\g$ is
one of the following Lie algebras:\\
i) $2\so(3) + \R^2$,\\
ii) $\su (3)$.

The Lie group $\hat G$ with Lie algebra $2\so(3)+\R^2$ is isomorphic to $SU(2)\times SU(2) \times T^2$.
By the same argument as in  our consideration  of possibility 4, case (ii),  we  can work equally with
the   group $U(2)\times U(2)$ instead  with $\hat G$. To distinguish the isomorphic 
factors $U(2)$ in this decomposition of $ G$, we denote them by $U(2)_1$ and $U(2)_2$.

\begin{proposition}\label{Proposition.2.2.7} In case (i)  with $ G=U (2)_1 \times U (2)_2$,  the Lie subgroup $H$ is  $U(1)_{k, k+1,l, l+1}\cdot \Gamma$, where $\Gamma$ is a finite subgroup of $\Zz ( G) = U (1)_1 \times U(1)_2$.  The kernel of this action is $\Gamma$.\\
In case (ii) with $\hat G = SU(3)$ the Lie subgroup  $H $ is  $U(1) _{k,l,m}\times \Gamma$, where  $(k,l)= 1$,  and
$\Gamma$ is any finite subgroup of the maximal torus $T^2 \subset SU(3)$.  If $k = l = 1$, then $H$ can also take the form $U(1)_{1,1,-2}\cdot \Gamma$,
where $\Gamma$ is a finite subgroup in $SU(2)$. If $k\not= l$, the kernel of the  $\hat G$-action is either $Id$ or $\Zz (\hat G)$, depending on $\Gamma$.  If $k = l=1$, the kernel of the  $\hat G$-action is $\Zz(\hat G) = \Z_3$.\\
A detailed  description of $H$ will be given in the proof.
\end{proposition}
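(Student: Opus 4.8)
The plan is to run the four-step scheme of Subsection 3.1: first pin down the connected isotropy group $H^0=U(1)$ by matching weights, then compute $\Nn(H^0)$, then retain only those disconnected extensions whose isotropy image stays inside $SO(4)_{3,4}$, and finally read off the kernel $\Zz\cap H$. Two elementary facts streamline all the torus bookkeeping, and I would isolate them at the outset: the centre acts trivially in the isotropy representation, so $\rho(\Zz(G))=\{Id\}$; and an element of a maximal torus is killed by $\rho$ only when it is central, so $\ker(\rho|_{T})=\Zz(G)$. As explained for possibility 4, it is harmless to replace $\hat G$ by $U(2)_1\times U(2)_2$ in case (i), since connected subgroups and their normalizers correspond under the projection.

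In case (i) I would place $H^0=U(1)$ in the maximal torus $T^4$ with coordinates $(a,b,c,d)$. Decomposing each $\u(2)_i$ under $H^0$ yields a two-dimensional Cartan piece of weight $0$ and a root plane of weight $\pm(a-b)$, resp. $\pm(c-d)$, so $V$ carries the weights $\{0,0,0,\pm(a-b),\pm(c-d)\}$. Comparing with the weights $\{0,\pm p,\pm q,\pm(p+q)\}$ of $U(1)_{p,q}$ (computed in the proof of Proposition \ref{Proposition.2.2.6}), the three zero weights force $|a-b|=|c-d|$, and effectiveness of $U(2)_1\times U(2)_2$ forces this common value to be $1$, because otherwise $H^0\cap\Zz(G)=\Z_{|a-b|}\neq Id$. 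Up to $S_2\times S_2$ and reorientation this gives $H^0=U(1)_{k,k+1,l,l+1}$, whose isotropy representation is a $4$-dimensional irreducible summand plus three trivial ones, hence lies in $SU(2)_{0,4}\subset SO(4)_{3,4}$ by Lemma \ref{Lemma.2.1.6}. No nontrivial element of $S_2\times S_2$ preserves the direction $(k,k+1,l,l+1)$, so the invariance principle gives $\Nn_G(H^0)=T^4$ and every admissible $H$ is $H^0\cdot\Gamma$ with $\Gamma\subset T^4$ finite; by $\ker(\rho|_{T^4})=\Zz(G)$ the constraint $\rho(\Gamma)\subset\rho(H^0)$ reads $\Gamma\subset H^0\cdot\Zz(G)$, so $\Gamma\subset\Zz(G)=U(1)_1\times U(1)_2$, and since $H^0\cap\Zz(G)=Id$ the kernel is $\Zz(G)\cap H=\Gamma$.

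In case (ii) I would take $H^0=U(1)_{k,l,m}\subset SU(3)$ with $k+l+m=0$, primitive (i.e. $(k,l)=1$). The six roots of $\su(3)$ restrict to $\pm(k-l),\pm(l-m),\pm(k-m)$ on $V$, and since $(k-l)+(l-m)=k-m$ these automatically have the shape $\{0,\pm p,\pm q,\pm(p+q)\}$ with $p=k-l$, $q=l-m$; hence $\rho(H^0)\subset SO(4)_{3,4}$ for every primitive triple, and in fact $\rho(T^2)$ is the maximal torus of $SO(4)_{3,4}$, since both are $2$-tori obeying the same relation $\theta_{p+q}=\theta_p+\theta_q$. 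I would then split on regularity. If $k,l,m$ are distinct, $\Zz_{SU(3)}(H^0)=T^2$; a Weyl reflection can preserve the line $\R(k,l,m)$ only for directions symmetric under negation, but such a reflection acts on the $3$-dimensional summand $\R^3\subset V$ by a reflection and so lies outside $SO(4)_{3,4}$, exactly as in the proof of Proposition \ref{Proposition.2.2.6}; thus the admissible groups are $H=U(1)_{k,l,m}\times\Gamma$ with $\Gamma\subset T^2$ finite (all such $\Gamma$ qualify because $\rho(T^2)\subset SO(4)_{3,4}$), and $\Zz(\hat G)\cap H$ is $Id$ or $\Z_3$ according to whether $H$ meets the centre. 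Up to $SU(3)$-conjugacy the only singular direction allowed by primitivity is $U(1)_{1,1,-2}$ (the cases $l=m$ or $k=m$ being conjugate to it); here $\Zz_{SU(3)}(H^0)=S(U(2)\times U(1))$, the extra $SU(2)$ acts on the weight-$\pm3$ four-plane by its standard representation, hence maps into $SU(2)_{0,4}\subset SO(4)_{3,4}$, so $\Gamma\subset SU(2)$ is admissible, and since $\theta=2\pi/3$ shows $\Z_3=\Zz(\hat G)\subset U(1)_{1,1,-2}$ the kernel equals $\Z_3$.

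The main obstacle throughout is the disconnected part: computing $\Nn_{\hat G}(H^0)$ and deciding which finite extensions keep $\rho(H)$ inside $SO(4)_{3,4}$. In case (i) this hinges on the absence of a direction-preserving Weyl element together with the identity $\ker(\rho|_{T})=\Zz(G)$; in case (ii) the delicate point is the jump of the centralizer at the singular direction $U(1)_{1,1,-2}$, where one must verify that the centralizing $SU(2)$ really acts as $SU(2)_{0,4}$ on the relevant four-plane. By contrast the first-step weight matching is routine once the $U(1)_{p,q}$ pattern for $SO(4)_{3,4}$ is in hand.
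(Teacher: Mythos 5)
Your proposal is correct and follows essentially the same route as the paper: matching the isotropy weights against the $U(1)_{p,q}$ pattern of the maximal torus of $SO(4)_{3,4}$ to pin down $H^0$, computing $\Nn(H^0)$ via the Weyl group and the invariance principle, testing which finite extensions keep $\rho(H)$ in $SO(4)_{3,4}$, and reading off the kernel. The only differences are cosmetic — you make explicit a couple of points the paper leaves implicit (effectiveness forcing $|a-b|=|c-d|=1$, and the exclusion of the direction-negating Weyl element via its reflection on the invariant $\R^3$, an argument the paper uses in Proposition \ref{Proposition.2.2.6}).
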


\begin{proof}
Let us consider case (i) with $G = U(2)_1 \times U(2)_2$.
Any embedding $ U(1) = \exp \h $ into $G$ is characterized by  a quadruple  of integers $(k_1,k_2,l_1,l_2)$ which
are coordinates of $U(1)$ in $ U(1)_{11} \times U(1)_{12} \times U(1)_{21} \times U(1)_{22}$, where
$U(1)_{ij}\times U(1)_{ii} $  is a maximal torus of $U(2)_i$. We denote  by $U(1)_{k_1,k_2,l_1,l_2}$ this subgroup $\exp \h$. The isotropy  action of $U(1)_{k_1, k_2, l_1, l_2}$  with parameter $\theta$  has weights $(\exp \pm \sqrt{-1}(k_1-k_2) \theta, \exp\pm \sqrt{-1} (l_1 - l_2) \theta, 1, 1, 1)$.  Note that $\rho (U(1)_{k_1,k_2,l_1,l_2})$ 
can be  written as $U(1)_{p,q}$ as in the proof of  Proposition \ref{Proposition.2.2.6}, case (i). Since the weights
of the representation of $U(1)_{p,q}$ on $\R^7$ are  $(1, \exp \pm \sqrt{-1}p\theta, \exp \pm\sqrt{-1}q \theta, \exp \mp \sqrt{-1}(p+q) \theta)$ coincide with  the weights of the isotropy action of $U(1)_{k_1, k_2, l_1, l_2}$, we conclude that
$U(1)_{p,q}$ must be either $U(1)_{0,4}$ or $SO(2)_{2,2}$ (cf. with the proof of Proposition \ref{Proposition.2.2.6}, case (i)). 
So  $k_1 - k_2 = \pm 1$ and $l_1 - l_2 = \pm 1$.  Up to automorphism of $G$  all these  solution  subgroups
are equivalent, so  we  will take a representative $U(1)_{k, k+1, l, l+1}$ of these solutions.

We compute $\Nn_G ( U(1)_{k,k + 1,l, l+ 1})$ easily, by using the projection of this subgroup on each component 
$U(2)_i \subset G$. Knowing   $\Nn _{SU (2) } (U (1)) = U(1) \cdot \Z _2[A_{(12)}]$ 
we conclude that $\Nn_G  ( U (1)_{k, k+1, l, l+ 1}) = T ^4$, if $(k +1) ^2 + (l + 1) ^2 \not = 0$.
Otherwise $\Nn_G ( U(1) _{-1, 1,-1,1}) = T^4 \cdot \Z_2 [ (A_{(12)}, A_{(12)})]$.

In the   first case  
$H = U(1)_{k, k +1, l , l+1 } \times \Gamma$, where $\Gamma$ is  a finite subgroup of $T^4$.
Since the isotropy action of $(\exp \sqrt{-1} \theta _1, \exp \sqrt{-1}  \theta _2 , \exp \sqrt{-1}  \tau_1, \exp \sqrt{-1} \tau_2)$   acts on the  fixed-point subspace $\R ^3\subset \R^7$  of $\rho(H^0)$ as the identity,
we  conclude that
$(Ad_{\Gamma })_{ | V } \subset Ad_{U(1)_{k, k +1, l , l+1 }}$, hence $\Gamma \in H^0 \cdot \Zz (G)$.

In case $k =1 = l$, a direct computation shows that  the action of $Ad_{  (A_{(12)}, A_{(12)})}$  changes  orientation of $V^7$. Thus  the examination of this case can be done as in   the  previous case with $k\not = l$. This proves  the first assertion of Proposition \ref{Proposition.2.2.7}.
The second assertion  follows  by direct computation. 

Now let us consider case (ii). An embedding $\exp \h = U(1)   \to   T ^2 \subset SU (3)= G ^{sc}$
can be characterized by  a triple  $(k, l, m)$  with $k+l+m = 0$ and $(k, l) = 1$. We  denote
this subgroup by $U(1)_{k,l,m}$. The weights of the isotropy action of $\h$ on $V$ are
$(0, \pm \sqrt{-1} (k-l), \pm \sqrt{-1} (l-m), \pm \sqrt{-1} (m-k))$. 
The group $ \rho ( U(1))$ can be  embedded into $SO(4)_{3,4}$  by setting the coordinates $p, q$ of this subgroup
$ \rho (U(1))$ in the maximal torus $ T ^2 $ of  $SO(4)_{3,4}$ whose basis  is  subgroups $U(1)_{0,4} \subset  SU(2)_{0,4}$ and $SO(2)_{2,2}\subset SO(3)_{3,3}$  as above. 
Since the weights of the  action of  $U(1)_{p,q}$ on $\R ^7 $ are
$1,  \exp \pm \sqrt{-1}p\theta, \exp \pm \sqrt{-1} (-p +q)\theta,  \exp \mp  \sqrt{-1} q \theta$,  we have
$ p = (k -l), \, -p + q = (l -m), q = k -m$. If $k \not = l$,  then $\ker \rho_{U (1)_{1,1,-2}} = \Z_3$.
 
To compute the normalizer $\Nn_{SU(3)} (U(1)_{k,l,m})$, as in the previous case, 
we observe that the connected  component of $\Zz_{SU(3)} (U(1)_{k,l,m})$ is $T^2$. Applying the invariance
principle, we conclude that $\Nn_{SU(3)}( U(1) _{k,l,m})$ leaves the torus  $T^2$ invariantly. 
Hence  $\Nn _{SU(3)} (U(1)_{k,l,m})$ is a subgroup of the normalizer $\Nn _{SU(3)} ( T ^2) = T ^2 \cdot \Sigma _3$. Arguing as in  possibility 4, case (ii), we conclude that  an element $x\in \Sigma _3$  normalizes   $U(1)_{k,l,m}$, only if $x = Id$, because $(k,l,m)$ is  regular.  Thus $\Nn_{SU(3)} (U(1)_{k,l,m}) = T^2 $, for $(k,l) = 1$ and $k \not = l$.  It is known that $\Nn_{SU(3)} U(1)_{1,1, -2} = SU(2)\cdot U(1)_{1,1, -2}$.

Now let us consider a disconnected Lie subgroup $H$ whose identity component $H^0$ is $U(1)_{k,l,m}$. Clearly  $H= H^0 \times \Gamma$, where $\Gamma$ is a subgroup of the maximal
torus $T^2 \subset SU(3)$. 
The same argument as in the previous case implies that the image of $Ad_{T^2}$ is the maximal  torus of
$SO(4)_{3,4}$. This proves the third assertion of Proposition \ref{Proposition.2.2.7}. Applying Lemma \ref{Lemma.2.2.4}.ii we prove the assertion for the case $k = l =1$.  A  direct computation
of $\Zz(G)\cap H$ completes the proof of  Proposition \ref{Proposition.2.2.7}.
\end{proof}

{\it Possibility 6 with $\h = 0$}. In this case $H$ is a finite subgroup of a compact group $ G $ dimension 7. Thus    $\hat G$ is  one of the following cases:\\
6i) $T^7$,\\
6ii)  $SU(2)\times T^ 4$,\\
6iii) $SU(2)\times SU(2)\times U(1)$.

Clearly  any group  $G$ listed above admits a  $G$-invariant 3-form of $\tilde G_2$-type. Since $T^7$ is commutative,
wee need only to verify in case (6ii) (resp. case (6iii)), whether  there is a finite non-central subgroup $H\subset G$ such
that $\rho(H)\subset SO(3)$ (resp. $\rho (H) \subset SO(3)\times SO(3)$) is a subgroup of  $SO(4)_{3,4} \subset G_2$. 
In  case (6ii) the action of any element $e\in \rho (H)$ on $\R^7$  leaves  a subspace $\R^ 5$  invariant. On the other hand, any   element  $e\not = Id \in SO(4)_{3,4}$  
is conjugate to an element in  $T^2 \subset SU(3) \subset SO(6) \subset SO(7)$, which cannot have its fixed point  subspace in $\R^7$ of dimension greater than 3.
Thus $\rho (H)$ consists only of the identity.
In case (6iii), let $SO(4)_{3,4}$ be  a maximal compact subgroup in $\tilde G_2$ containing  $\rho(H)$, whose existence follows from \cite[Theorem 2.1, p. 256]{Helgason1978} (see also Lemma \ref{Lemma.2.1.6}).  We  note that $\rho (H)$ is a subgroup of $SO(3)\times SO(3)$ as well as a subgroup of
$Gl (\R^6) \cap G_2 = SU(3)$, (this is a consequence of the transitivity of the $G_2$-action on $S^6 \subset \R^7$ \cite{Bryant1987}, or  see \cite[\S 2]{CS2002} for an alternative argument), taking into account that $SO(4)_{3,4}$ is also a subgroup of $G_2$ by Lemma \ref{Lemma.3.1.3}. Let $V_1 = \R ^3$ and
$V_2 = \R^3$ be  invariant subspaces of  $\rho(H)$ and $J$ be  the complex structure on $\R^6$. There are two possibilities: either $V_2 = JV_1$, or
$JV_1 \cap V_1  = \R ^2_1 $  and $JV_2 \cap V_2 = \R^2_2$. In the first possibility  $\rho (H) $ is a subgroup
of $(SO(V_1) \times SO(V_2))  \cap SU(3) = SO(3)_{3,3}$.   In the second possibility $\rho (H)$ is a  cyclic
subgroup  of the form $(x, x^{-1}) \in SO(3) \times  SO(3)$.  Clearly these subgroups   belong to $SO(4)_{3,4}$.    
Thus we get

\begin{proposition}
\label{Proposition.2.2.8} i) Let  $H$  be a finite  subgroup of $T^7$. The  action of $T^7$ on $T^7/H$ is effective, iff $H = \{e \}$.\\
  ii) Let $H$ be a finite   subgroup of a  compact Lie  group $G = SU(2) \times  T^4$.  The  quotient  space $G/H$  admits a $G$-invariant 3-form of $\tilde G_2$-type, if and only if $\rho(H)$ is central.\\ 
 iii) Let $H$ be a finite  not central subgroup
of a compact Lie group  $G = SU(2) \times SU(2) \times U(1)$. The  quotient  space $G/H$  admits a $G$-invariant 3-form of $\tilde G_2$-type, if and only if  $\rho(H)$ is a  subgroup of $ SO(3)_{3,3}$ or a cyclic group of the form
$(x, x^{-1})\in SO(3) \times SO(3)$.\\ 
\end{proposition}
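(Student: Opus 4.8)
My plan is to reduce all three parts to a single fact: since $\h=0$ the isotropy module is $V=\g$ and $\rho$ is just the restriction of the adjoint representation to the finite group $H$, so $G/H$ admits a $G$-invariant $\tilde G_2$-structure if and only if $\rho(H)$ lies in a conjugate of $\tilde G_2$. As $\rho(H)$ is finite, hence compact, Lemma \ref{Lemma.2.1.6} lets me replace this requirement by the condition that $\rho(H)$ lie in a conjugate of $SO(4)_{3,4}$. Part (i) I would then dispatch at once: for the abelian group $T^7$ every subgroup is normal, so the kernel of the action of $T^7$ on $T^7/H$ is the largest normal subgroup of $T^7$ contained in $H$, which is $H$ itself; effectiveness therefore forces $H=\{e\}$.

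For part (ii) I would write $h=(s,t)\in SU(2)\times T^4$ and observe that the central factor $T^4$ acts trivially, so $\rho(h)=\mathrm{Ad}_h$ fixes $\mathrm{Lie}(T^4)=\R^4$ pointwise and acts on $\su(2)=\R^3$ as $\mathrm{Ad}_s\in SO(3)$. If $s\notin\Zz(SU(2))$ then $\mathrm{Ad}_s$ is a nontrivial rotation with a one-dimensional fixed axis, so $\rho(h)$ fixes a subspace $\R^5\subset\R^7$. On the other hand, any nonidentity element of $SO(4)_{3,4}\subset G_2$ (Lemma \ref{Lemma.3.1.3}) is conjugate into the maximal torus $T^2\subset SU(3)\subset G_2$, and under $\R^7=\R\oplus\C^3$ such an element fixes at most the trivial summand together with a single complex line, i.e.\ a subspace of dimension $\le 3$. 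A five-dimensional fixed space thus forces $\rho(h)=\mathrm{Id}$, so $s\in\Zz(SU(2))$ for every $h\in H$ and $\rho(H)$ is trivial, i.e.\ $H\subset\Zz(G)$. Conversely, if $\rho(H)$ is central the value at $eH$ of any indefinite $3$-form is automatically $\rho(H)$-invariant and extends $G$-invariantly, which gives the stated equivalence.

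For part (iii) the central factor $U(1)$ again acts trivially, so $\rho(H)\subset SO(3)\times SO(3)$ acts on $V_1\oplus V_2=\R^3\oplus\R^3$ and fixes the line $\mathrm{Lie}(U(1))=\R^1$. Choosing by Lemma \ref{Lemma.2.1.6} a maximal compact $SO(4)_{3,4}\subset\tilde G_2$ containing $\rho(H)$, and using $SO(4)_{3,4}\subset G_2$ together with the transitivity of $G_2$ on $S^6$, the fixed vector puts $\rho(H)\subset\mathrm{Stab}_{G_2}(\mathrm{pt})=Gl(\R^6)\cap G_2=SU(3)$, where $\R^6=\C^3$ carries the complex structure $J$. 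Since $\dim_\R V_1=3$ is odd, the largest $J$-invariant subspace $V_1\cap JV_1$ of $V_1$ has real dimension $0$ or $2$; a short parity argument on $V_2=V_1^\perp$ shows the two factors behave alike. If the dimension is $0$ then $V_1$ is totally real and $JV_1=V_2$, so $\rho(H)\subset(SO(V_1)\times SO(V_2))\cap SU(3)$, which acts on $\R^7$ as two copies of the standard $SO(3)$-module plus a trivial line; by case (i) of Lemma \ref{Lemma.2.2.2} this is exactly $SO(3)_{3,3}$. If the dimension is $2$, writing $\R^2_1=V_1\cap JV_1$, $\R^2_2=V_2\cap JV_2$, and $L_1,L_2$ for the orthogonal lines with $JL_1=L_2$, each $\rho(h)\in SU(3)$ rotates $\R^2_1$ and $\R^2_2$ and fixes $L_1\oplus L_2$; the determinant-one condition forces the two rotation angles to be opposite, so $\rho(H)$ is abelian, hence cyclic, of the form $(x,x^{-1})\in SO(3)\times SO(3)$. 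Both families visibly lie in $SO(4)_{3,4}$ — the first by definition, the second inside the circle $U(1)_{0,4}\subset SU(2)_{0,4}$ — which yields the converse.

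The routine pieces — the adjoint decomposition of $\g$ and the bookkeeping of fixed-point dimensions against the bound $\le 3$ for nonidentity elements of $G_2$ — are straightforward. The one genuinely delicate point I expect is the final case analysis in (iii): the interaction of the complex structure $J$ with the two real three-dimensional invariant subspaces must be tracked carefully in order to extract the $SU(3)$ determinant constraint and to recognise precisely the two admissible shapes $SO(3)_{3,3}$ and $(x,x^{-1})$, as well as to place the latter back inside $SO(4)_{3,4}$ for the converse.
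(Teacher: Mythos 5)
Your proposal is correct and follows essentially the same route as the paper: commutativity for (i), the comparison of fixed-point dimensions ($\ge 5$ versus $\le 3$ for nonidentity elements conjugated into $T^2\subset SU(3)$) for (ii), and for (iii) the placement of $\rho(H)$ inside $SU(3)=Gl(\R^6)\cap G_2$ followed by the dichotomy on how $J$ meets the two invariant $3$-planes, yielding $SO(3)_{3,3}$ or a cyclic $(x,x^{-1})$. The only difference is that you spell out the determinant-one constraint forcing opposite rotation angles in the second subcase, which the paper merely asserts.
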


\subsection{Classification theorem}
In this subsection we summarize our computation in the   previous  subsection  in the following Theorem \ref{Theorem.2.3.1}, taking into account  our remarks  before Propositions \ref{Proposition.2.2.6} and 
\ref{Proposition.2.2.7}. We also provide a formula to compute the dimension of the space of all invariant $\tilde G_2$-structures on a given  manifold $G/H$, see Remark \ref{Remark.2.3.2}.d.

\begin{theorem}\label{Theorem.2.3.1} Let $G/H$ be a homogeneous space admitting
a $G$-invariant $\tilde G_2$-structure.  We assume that $G$ is a connected compact Lie group  and $G$ acts  effectively on $G/H$. Then $G/H$ is  one of the following spaces
\end{theorem}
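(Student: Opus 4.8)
The plan is to assemble the classification table directly from the case-by-case analysis already carried out in Propositions \ref{Proposition.2.2.1} through \ref{Proposition.2.2.8}, rather than proving anything genuinely new. Since Lemma \ref{red1} reduces the existence of a $G$-invariant $\tilde G_2$-structure to the six mutually exclusive possibilities (1)--(6) for the isotropy subalgebra $\bar\rho(\h)\subset\so(4)_{3,4}$, and each possibility has been exhaustively resolved, the theorem is simply the disjoint union of those outcomes. First I would organize the statement as a table whose rows correspond to the triples $(\g,\h)$ and, for each, list the effective pair $(G,H)$ together with the relevant discrete data $\Gamma$ and the kernel $\Zz(\hat G)\cap H'$ of the $\hat G$-action; the effective group is then $G=\hat G/(\Zz(\hat G)\cap H')$ and $H=H'/(\Zz(\hat G)\cap H')$, exactly as set up in the reduction of subsection 3.1.

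The key steps, in order, are: (i) invoke Lemma \ref{red1} to confine attention to possibilities (1)--(6); (ii) for each possibility, transcribe the conclusion of the corresponding proposition---Proposition \ref{Proposition.2.2.1} for possibility 1 (giving $G=Sp(2)\times Sp(1)$ with $H$ of the two stated forms), Proposition \ref{Proposition.2.2.3} for possibility 2 (the three ambient algebras $\so(5)$, $\su(3)+\R^2$, $3\so(3)+\R$), Proposition \ref{Proposition.2.2.5} for possibility 3 (the cases $\so(5)+\R$ and $\su(3)+\so(3)$), Proposition \ref{Proposition.2.2.6} for possibility 4, Proposition \ref{Proposition.2.2.7} for possibility 5, and Proposition \ref{Proposition.2.2.8} for possibility 6; and (iii) pass from each $(\hat G,H')$ to the effective $(G,H)$ by quotienting out the computed kernel, using the identity $\Zz(G)\cap H=Id$ that characterizes effectiveness under the standing assumption $\ker\bar\rho=0$. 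The remarks preceding Propositions \ref{Proposition.2.2.6} and \ref{Proposition.2.2.7} must be incorporated here, since in possibilities 4(ii) and 5(i) it is more convenient to present the answer in terms of $U(3)$ and $U(2)\times U(2)$ respectively, and these descriptions should appear in the table as stated.

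The main obstacle is not mathematical but organizational and bookkeeping-theoretic: I must ensure that the list is genuinely complete (every possibility and every sub-case is represented) and non-redundant (spaces appearing in two different possibilities, or two presentations of the same homogeneous space, are identified). In particular I would check carefully that the effective quotients are computed consistently---for instance that the kernels $\Z_2$, $\Z_2\times\Z_2[\Zz(Sp(1))]$, $\Z_2[\Zz(H^0)]\cdot\Gamma$, and so on, are correctly divided out---and that the discrete extension data $\Gamma$ are recorded in a uniform normalized form across all rows. A secondary subtlety is the infinite families indexed by coprime integers $(k,l)$ or quadruples $(k,k+1,l,l+1)$ arising in possibilities 3, 4, and 5; the table must display these parametrizations and the associated constraints (such as $k\neq0$, $(k,l)=1$, $kl\neq0$) faithfully, so that the classification is read off unambiguously. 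Once these consistency checks are made, the theorem follows immediately as the union of the tabulated cases, and I would close by noting, as already observed in the introduction and at the start of section 3, that every such $G/H$ simultaneously admits a $G$-invariant $G_2$-structure since $SO(4)_{3,4}\subset G_2$ by Lemma \ref{Lemma.3.1.3}.
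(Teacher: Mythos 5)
Your proposal matches the paper's own treatment: Theorem \ref{Theorem.2.3.1} is obtained there exactly as a summary table assembled from Lemma \ref{red1} and Propositions \ref{Proposition.2.2.1}, \ref{Proposition.2.2.3}, \ref{Proposition.2.2.5}, \ref{Proposition.2.2.6}, \ref{Proposition.2.2.7}, \ref{Proposition.2.2.8}, together with the remarks preceding Propositions \ref{Proposition.2.2.6} and \ref{Proposition.2.2.7} and the passage to effective quotients by the kernel $\Zz(\hat G)\cap H'$. Your bookkeeping plan, including the $U(3)$ and $U(2)\times U(2)$ presentations and the closing observation that $SO(4)_{3,4}\subset G_2$ yields an invariant $G_2$-structure as well, is the same route the authors take.
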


\begin{math}
\begin{array}{ccc}
Case & G & H  \\ 
1 & (Sp(2) \times Sp(1)) /\Z_2 & SO (4)_{3,4} \\ 
1 & SO(5) \times SO (3) & SO(4)_{3,4} \\ 
2ai,\,\Gamma \subset \Z_2 \times \Z_2 & SO(5) & SO(3)\cdot \Gamma   \\
2ci,\Gamma\subset Sp(1) & Sp(2) & Sp (1) \times \Gamma\\ 
2ci,\Gamma\subset SO(3) & SO(5) & Sp (1) \times \Gamma \\ 
2cii & SU(3) \times T^ 2 & SU(2) \\
2cii & PSU(3) \times T^ 2 & SU(2) \\
2aiii & (Sp(1)\times Sp(1) \times Sp(1))/\Z_2 \times U(1)  &  SO(3)  \\
2aiii & SO(3) \times SO(4) \times  U(1)  &  SO(3) \\
2aiii & SO(3) \times SO(3) \times SO(3) \times U(1) & SO(3)\\
3bii,\, (k,l)= 1, \, k\not = 0&  SO(5) \times U(1)  & SO(3) \cdot U(1)_{k,l}\\
3biii,\, (k,l)= 1, kl\not=0 & SU(3) \times SU(2) & SU(2)_{2,0}\cdot U(1)_{k,l}\\
3biii,\, (k,l)= 1, kl\not=0 & SU(3) \times SO(3) & SU(2)_{2,0}\cdot U(1)_{k,l}\\
3biii,\, (k,l)= 1, kl\not=0 & PSU(3) \times SU(2) & SO(3)_{2,0}\cdot U(1)_{k,l}\\ 
3biii,\, (k,l)= 1, kl\not=0 & PSU(3) \times SO(3) & SO(3)_{2,0}\cdot U(1)_{k,l}\\
3aiii &   SU(3)\times SO(3)&      SO(3)_{2,3}\cdot  U(1)_{1,0}\\ %
3aiii & PSU(3) \times SO(3) &SO(3)_{2,3}\cdot  U(1)_{1,0}\\ %
4i &(SU(2) \times SU(2) \times SU(2))/\Z_2&  T^2 \text{ or } T^2 \cdot \Z_2\\ 
4i & SO(3)  \times SO(4) &   T^2 \text { or } T^2 \cdot \Z_2\\
4i & SO(3) \times SO(3) \times SO(3)&  T^2\text { or } T^2 \cdot \Z_2\\
4ii &  SU(3) \times U(1) & U(1)_{k,k,k+ 1} \cdot U(1)_{m, m+1, m+1}\\
4ii & U(3)   &  U(1)_{k,k,k+ 1} \cdot U(1)_{m, m+1, m+1} \\
4ii  & PSU(3)\times U(1) &  U(1)_{k,k,k+ 1} \cdot U(1)_{m, m+1, m+1} \\
5i & SU(2) \times SU(2)\times U(1)\times U(1) & U (1)_{k,k+1, l, l+1}\\
5i & SU(2) \times U(2) \times U(1) & U (1)_{k,k+1, l, l+1}\\
5i & U(2) \times U(2) & U (1)_{k,k+1, l, l+1}\\
5i & SO(4) \times U(1)\times U(1) & U(1)_{k, k+1, l, l+1 }\\
5i & SO(3) \times SO(3)\times U(1) \times U(1) & U (1)_{k,k+1, l, l+1}\\
5 ii,\, (k,l)= 1,\, k\not = l & SU(3) &  U(1)_{k,l} \cdot \Gamma,  \Gamma \subset U(1)\\
5 ii,\, (k,l)= 1,\, k\not = l & PSU(3) &  U(1)_{k,l} \cdot \Gamma,  \Gamma \subset U(1)\\
5ii,\, k = 1= l & PSU(3) & U(1)_{1,1}\cdot \Gamma, \Gamma \subset SU(2)   \\
6i &  T^7 &    \{ e\}\\
6ii &  SU(2) \times T^4&     \rho(H) = \{ e\}  \\   
6ii &  SO (3) \times T^4&    \rho(H) =  \{e \}  \\
6iii,\, \#(H)<\infty & SU(2) \times SU(2) \times  S^1  & \rho( H) \subset SO(3)_{3,3}, \text{ or } \rho(H) = \Z_k \\   
6iii,\, \#(H)<\infty&  SO (3) \times SU(2) \times S^1&    \rho(H) \subset SO(3)_{3,3}, \text{ or } \rho(H) = \Z_k\\
6iii,\, \#(H)<\infty&  SO (3) \times SO(3) \times S^1&     \rho(H) \subset SO(3)_{3,3},\text{ or } \rho(H) = \Z_k
\end{array}
\end{math}

In this table,  spaces   have the same  covering, if  and only if  they have
the  same  numeration.  We also use the  notation $PSU(3)$ for the quotient $SU(3)/\Zz (SU(3))$.

We now define the degree of rigidity of  $G/H$   as the     dimension of the space of all $G$-invariant  3-forms
of $\tilde G_2$-type  on $G/H$,  and  we denote this degree by $d_3(G/H)$.  This dimension is equal to the dimension  of the space of  all  $G$-invariant
3-forms on $G/H$, since the $GL(\R ^7)$-orbit of $\tilde \phi$ is open in $\Lambda ^3 (\R ^7) ^*$. Hence the degree of rigidity of $G/H$ equals  the dimension of the space of  all $\rho(H)$-invariant  3-forms on $V$. 
 
We  have the following 
decomposition (see e.g. \cite{Bryant1987}, \cite[table 5]{VO1988})
$$ \Lambda ^3 ( V ^*) = \Lambda^3_1 (V ^*)\oplus \Lambda ^3 _{7}(V ^* ) \oplus \Lambda ^3_{27} (V ^*),$$
where $\Lambda ^3 _i(V ^*)$ is the component of dimension $i$.  The component $\Lambda ^3 _1 $ is generated
by $\tilde \phi$, the component  $\Lambda ^3 _7$ is $\tilde G_2$-isomorphic (and hence $\rho(H)$-isomorphic) to  $V ^* = V $, and  $\Lambda ^3_{27}$
is $\tilde G_2$-isomorphic (and hence $\rho(H)$-isomorphic) to the space $S ^2 _0 (V ^*)$ of traceless quadratic  forms
on $V$.
This isomorphism can be  written explicitly as \cite[(2.15)]{Bryant2005}
$$ i_{\tilde \phi}  (\alpha \circ \beta) = \alpha \wedge *_{\tilde \phi} (\beta\wedge *_{\tilde \phi} \tilde \phi) + \beta \wedge *_{\tilde \phi}(\alpha \wedge *_{\tilde \phi} \phi).$$
Now let $\rho(H)$ be a subgroup of $SO(4)_{3,4}\subset \tilde G_2 \subset Gl (\R ^7)$. Denote by $d_1$  the dimension of
the  fixed-point subspace of $V$ under the action of $\rho(H)$.  Denote by
$d_2$  the  dimension  of the subspace  of all $\rho(H)$-invariant quadratic  forms on  $V$. 
Then we have
\begin{equation}
d_3 (G/H) =  d_1  + d _2. 
\label{2.3.2}
\end{equation}
Dimension $d_1$ is already explicit from the embedding $\rho : H \to SO (4)_{4,3}\subset  Gl (\R ^7)$. To compute $d_2$ we use the decomposition
$S ^2 (\rho)$ computed in \cite[table 5]{VO1988}.

\begin{remark}\label{Remark.2.3.2}
a) Since $SO(4)_{3,4}$ is also a compact Lie subgroup of $G_2$, all of  the   homogeneous spaces
$G/H$  listed above also admit $G$-invariant $G_2$-structures.  Hence the dimension of  the space of all $G$-invariant 3-forms
on $G/H$ is at least 2. \\
b) Some  different  spaces $G/H$ listed above are diffeomorphic as  differentiable manifolds, e.g. $(Sp(2)\times Sp(1)/\Z_2)/SO(4)_{3,4}$ (case 1)  and $Sp(2)/Sp(1)$  (case  2ci) are diffeomorphic to  the standard sphere $S^7$.  Other examples are the Wallach spaces in  (5ii)  with different (k,l). We refer the reader to \cite[p. 466]{KS1991} for a precise formulation, when  these Wallach spaces are diffeomorphic.\\
c)  As a consequence of our classification we get a  new proof for a statement in \cite{Le2006a} that $S^3 \times S^4$ admits no 
homogeneous $\tilde G_2$-structure. Since $S^3 \times S^4$ is simply connected,  by \cite[Theorem A]{Montgomery1950} if  $S^3 \times S^4$ admits
a transitive action of a group $G$ it admits also a  transitive action of a  compact Lie subgroup $G' \subset G$.
On the other hand $S^3 \times S^4$ is not in our list.\\
d) Clearly the dimension of the space of $G$-invariant  $\tilde G_2$-structures  on $G/H$ is equal to $d_3(G/H) -1$.
\end{remark}

\section{Compact homogeneous  manifolds  admitting invariant $G_2$-structures}

In this section we classify all homogeneous spaces $G/H$ admitting a $G$-invariant $G_2$-structure such that $G$ is a compact Lie group and $H$ is a closed
Lie  subgroup  (not necessary connected) of $G$. Our strategy is similar to  that one in the previous section.  We also compute
the dimension of  the space of $G$-invariant  $G_2$-structures on $G/H$, see Remark \ref{Remark.3.3.2}.a.

\subsection{Reduction to a representation problem}

We use the same method as in  the previous section  to classify  all pairs $(H\subset G)$ of a compact Lie group  $H$ in a compact Lie group $G$
such that $G$ acts effectively on $G/H$ and $G/H$ admits a $G$-invariant $G_2$-structure.  First we will  classify all pair of the
corresponding Lie algebras $(\h\subset \g)$  such that $\bar \rho (\h) \subset \g_2$.  Combining the list of maximal Lie subalgebras in
$\g_2$  and  the list of Lie compact subalgebras in $\so(4)_{3,4}$ in the previous section we  get the  following list of  compact 
Lie algebras  $\bar \rho (\h)$ in $\g_2$\\
1) $\bar\rho( \h) = \so (4)_{3,4}$;\\
2) $\bar\rho(\h) =  \so(3)$ with four possible embeddings into  $\g_2$.
In the first three cases (2a), (2b), (2c) we have $\bar \rho (\h)\subset\so(4)_{3,4}$, see also subsection 3.1.
In the last case (2d)   we have $\bar \rho(\h) = \so (3)_7$;\\
3) $\bar\rho(\h) =  \so(3)+ \R\subset \so(4)_{3,4}$,\\ 
4) $\bar\rho( \h) =  \R ^2 $; \\
5) $\bar\rho( \h) =  \R ^1 = \so (2) $ (there are infinitely  many inequivalent embeddings of  $\so (2)$ into $\so (4)$)\\
6) $\bar\rho(\h) =  0 $;\\
7) $\bar\rho(\h) = \g_2$;\\
8) $\bar \rho(\h) = \su(3)$.

The first cases 1-5, except  cases (2d), have been analyzed on the algebra level in  the previous subsection 3.1.  When lifting to  the  corresponding
Lie subgroup we  need to check whether  the corresponding disconnected Lie subgroup $H$ belongs to $G_2$ but does not belong to
$SO(4)_{3,4}$. Further, we notice that  any subalgebra $\su(2)$ in $\su(3) \subset \g_2$ is conjugate to $\su(2)_{0,4}\subset \so(4)_{3,4}$ or to $\so(3)_{3,3}$. 

\begin{proposition}\label{Proposition.3.2.1} Suppose that  $\hat G/H$ admits a $\hat G$-invariant $G_2$-structure  such that $\bar\rho(\h)$ is one of possibilities 1-5 listed above, except case (2d).
Suppose that $\hat G = G ^{sc} \times T ^k$ where $G^{sc}$ is a simply connected semisimple Lie group and $\bar \rho$ is  a faithful representation.
Then  $(H\subset \hat G)$ must be one of the  pairs listed in  Propositions \ref{Proposition.2.2.1},  \ref{Proposition.2.2.3}, \ref{Proposition.2.2.5}, \ref{Proposition.2.2.6}, \ref{Proposition.2.2.7}, \ref{Proposition.2.2.8}. Here we make the same  assumptions for the cases   considered in  Propositions \ref{Proposition.2.2.6}, \ref{Proposition.2.2.7}  as in the previous section.
\end{proposition}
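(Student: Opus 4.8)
The plan is to leverage the fact that for possibilities 1--5 (other than (2d)) all the representation theory has already been carried out in Section 3.1. Each of these has $\bar\rho(\h)\subset\so(4)_{3,4}$, so the admissible algebra pairs $(\h\subset\g)$ and the connected subgroup $H^0\subset\hat G$ are exactly those of Propositions \ref{Proposition.2.2.1}, \ref{Proposition.2.2.3}, \ref{Proposition.2.2.5}, \ref{Proposition.2.2.6}, \ref{Proposition.2.2.7} and \ref{Proposition.2.2.8}; the remark recorded just above, that every $\su(2)\subset\su(3)\subset\g_2$ is conjugate to $\su(2)_{0,4}$ or to $\so(3)_{3,3}$, ensures that admitting the larger ambient group $G_2$ produces no $\so(3)$-embedding escaping $\so(4)_{3,4}$. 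Since $SO(4)_{3,4}\subset G_2$, every pair already on the list still carries a $G_2$-structure, so the whole content of the proposition is to prove that weakening the condition $\rho(H)\subset SO(4)_{3,4}$ to $\rho(H)\subset G_2$ produces no new disconnected subgroup $H$.

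To see this I fix $H^0$ and re-examine the finite extensions $\gamma\in\Nn_{\hat G}(H^0)$ one case at a time. Such a $\gamma$ has $\rho(\gamma)$ normalising $\rho(H^0)\subset SO(4)_{3,4}$, so by the invariance principle $\rho(\gamma)$ stabilises each $\rho(H^0)$-isotypic block of $\R^7$. For possibilities 1, (2b), (2c), (3a), (3b) and the tori of 4 and 5, these blocks force $\rho(\gamma)$ to preserve the splitting $\R^7=\R^3\oplus\R^4$ --- either because the isotypic decomposition already refines it, or because $\Nn_{\hat G}(H^0)$ is so small, by the regularity computations of Propositions \ref{Proposition.2.2.6} and \ref{Proposition.2.2.7}, that no extension remains to be tested. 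I would then invoke the maximality of $SO(4)_{3,4}$ in $G_2$ (Lemma \ref{Lemma.3.1.3}): the group $G_2\cap\bigl(Gl(\R^3)\times Gl(\R^4)\bigr)$ preserves this splitting and contains $SO(4)_{3,4}$, while $G_2$ acts irreducibly on $\R^7$, so maximality forces the intersection to equal $SO(4)_{3,4}$. Consequently $G_2$ acts on the $\R^3$-factor only through $SO(3)$, and any $\rho(\gamma)$ of determinant $-1$ there --- precisely the elements discarded in Section 3.1, such as the $\Sigma_3$-reflections acting as $(1,1,-1)$ in Propositions \ref{Proposition.2.2.6} and \ref{Proposition.2.2.7} --- cannot lie in $G_2$. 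Equivalently, as $\tilde\phi=2\,\om^{123}-\phi$, a $G_2$-element fixing $\om^{123}$ also fixes $\tilde\phi$ and hence lies in the compact group $SO(4)_{3,4}$; thus the $SO(4)_{3,4}$- and $G_2$-admissible extensions coincide in all these cases.

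The genuinely delicate possibility is (2a), where two isomorphic summands of $\rho(H^0)$ straddle the splitting, so that the invariance principle no longer pins it down. Concretely, the diagonal $Sp(1)$ has isotropy of the shape $V_3\oplus V_3\oplus\R$ with one $V_3$ lying in $\R^3$ and the other in $\R^4$, and Schur's Lemma then shows that $\Zz_{G_2}(\rho(H^0))$ strictly exceeds $\Zz_{SO(4)_{3,4}}(\rho(H^0))$: there is a circle in $G_2$ rotating the two copies of $V_3$, namely the scalar $U(1)$ of the subgroup $SU(3)\subset G_2$ through $\rho(H^0)$, and this circle meets $G_2$ only in $\Zz(SU(3))=\Z_3$. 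Whether this extra $\Z_3$ actually enlarges the classification depends on the ambient group: for $\hat G=Sp(2)$ the centraliser $\Zz_{\hat G}(H^0)=O(2)$ does realise the rotation, whereas for $\hat G=SU(2)\times SU(2)\times SU(2)$ it is finite and central by Proposition \ref{Proposition.2.2.3}, so nothing escapes there. I expect this centraliser comparison to be the main obstacle: for each configuration of type (2a) one must decide exactly which finite part of $\Zz_{G_2}(\rho(H^0))$ is produced by $\Nn_{\hat G}(H^0)$, and verify by a direct evaluation on $\phi$ how the resulting order-$3$ extension sits relative to the list. Once this single delicate point is settled, together with the uniform maximality argument for every other case, the admissible $H$ are precisely those of Propositions \ref{Proposition.2.2.1}, \ref{Proposition.2.2.3}, \ref{Proposition.2.2.5}, \ref{Proposition.2.2.6}, \ref{Proposition.2.2.7} and \ref{Proposition.2.2.8}.
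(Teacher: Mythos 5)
Your handling of possibilities 1, (2b), (2c), 3, 4 and 5 is essentially the paper's own argument in a different dress: the paper's entire proof consists of the one-line observation that $\rho(H)$ preserves $\Lambda^3(\R^3)=\langle\om^{123}\rangle$, hence the splitting $\R^3\oplus\R^4$, hence lies in $G_2\cap (SO(3)\times SO(4))=SO(4)_{3,4}$, which is the same identity your maximality argument rests on. Where you genuinely part company with the paper is possibility (2a), and your instinct there is correct: for $\rho(H^0)=SO(3)_{3,3}$ the two three-dimensional irreducible summands form a single six-dimensional isotypic block straddling the splitting (and the $H^0$-fixed subspace of $\Lambda^3(\R^7)^*$ is five-dimensional), so neither your isotypic argument nor the paper's invariance-principle argument pins down $\langle\om^{123}\rangle$. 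The problem is that you then stop: ``I expect this centraliser comparison to be the main obstacle'' and ``one must decide'' are not a proof, and this unresolved step is exactly where the content of the proposition lies.

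Worse, when one actually carries the step out it does not go the way the proposition needs. Since $SO(3)_{3,3}$ is the real form of $SU(3)\subset G_2$, the centre $\Zz(SU(3))=\Z_3$ centralises it inside $G_2$; writing $\R^7=(V\otimes\R^2)\oplus\R$ with $V$ the three-dimensional irreducible summand, the generator acts as $1\otimes R_{2\pi/3}$ (rotation by $2\pi/3$ in the $\R^2$-factor) and fixes $\R$. The group $SO(3)_{3,3}\cdot\Z_3$ has no invariant three-dimensional subspace of $\R^7$, so it is not conjugate into $SO(4)_{3,4}$. For $\hat G=Sp(1)^3\times U(1)$ (case 2aiii) this is harmless, since the order-three rotation of the two copies of $V$ would have to come from permuting the three $Sp(1)$-factors, an outer automorphism not realised by $\Nn_{\hat G}(H^0)$. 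But for $\hat G=Spin(5)$ (case 2ai) the identity component of $\Zz_{SO(5)}(SO(3))$ is the circle $SO(2)$, whose isotropy image is precisely the scalar circle of $U(3)$ with respect to the $G_2$-complex structure on $V\otimes\R^2\cong\C^3$ (it is the unique $SO(3)_{3,3}$-equivariant orthogonal circle there); its intersection with $G_2$ is $\Zz(SU(3))=\Z_3$, realised by the rotation through $2\pi/3$. So the extension you were worried about actually occurs: $H=Spin(3)\cdot\Z_3\subset Spin(5)$ has $\rho(H)$ conjugate into $G_2$ but not into $SO(4)_{3,4}$, and is absent from Proposition \ref{Proposition.2.2.3}. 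Unless some constraint excludes this element that both you and the paper's one-line proof have overlooked, the delicate point you isolated is not a formality but a genuine obstruction to the statement as written; in either case your argument is incomplete precisely at the one place where a computation is indispensable.
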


\begin{proof} 
Let $H^0$ be the identity connected component of $H$. Since $H^0 \subset SO(4)_{3,4}$  
the space $\Lambda ^3 (\R^3)$  is  invariant under the action of $H^0$ on $\Lambda ^3 (\R ^ 7)$.  Thus the invariance principle implies that  $\rho ( H)  \subset G_2$, if
and only if it belongs to $SO(4)_{3,4} = G_2 \cap (SO (3) \times SO(4))$.
This 
completes the  proof of Proposition \ref{Proposition.3.2.1}. 
\end{proof}

Let us consider the remaining cases. To handle the possibility (2d),  we use  our analysis in subsection 2.2. Case (2d) corresponds to  case  (2ii)
with the  associated  embedding of $\so(3) \to \so(5)$   being a real irreducible representation  of $\so(3)$ of dimension 5. Its connected
subgroup in $Spin (5) = Sp (2)$  is the subgroup $SU(2)_4$,  defined by the irreducible complex representation of $SU(2)$ of dimension  4.

In possibility 6, the argument  in  the previous section yields that
 there  is  no new  case.
 
In  possibility 7,  taking into account that $\dim \g = 21$, we conclude that $\g = \so (7)$.

In possibility 8, taking into account that $\dim \g = 15$,  we conclude that $\g = \su(4)$ or $\g =\g_2 + \R$.

\begin{proposition}\label{Proposition.3.2.2}  Let $\hat G = G ^{sc} \times T ^k$, where $G^{sc}$ is   a connected simply-connected semisimple Lie group. Suppose that  $\hat G/H$ admits a $\hat G$-invariant $G_2$-structure  such that  the action
of $\hat G$ is almost effective.  Suppose that $(H\subset   \hat G)$ is not  listed in Proposition \ref{Proposition.3.2.1}.
Then  $(H\subset \hat G)$ is one of the  pairs listed  below:\\
Possibility (2d), $H = SU(2)_4 \subset Sp (2)$.  The kernel of the action is $\Zz (\hat G) \cap H = \Z_2$.\\
Possibility (7), $H =  G_2 \cdot \Gamma  \subset Spin (7)$, $\Gamma \subset \Zz(Spin (7)) = \Z_2$. The  kernel of the  action is  $\Gamma$.\\
Possibility (8),  $SU (3)\cdot \Gamma \subset SU (4) $, $\Gamma \subset \Zz (SU(4)) = \Z_4$.  The kernel of the action is $\Gamma$.\\
Possibility (8), $SU(3) \cdot \Gamma \subset G_2 \times S^1$,  $\Gamma \subset \Zz (G_2 \times S^1)$. The kernel of the action is $\Gamma$.

\end{proposition}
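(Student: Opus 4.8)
The plan is to dispose of the three remaining possibilities (2d), (7) and (8) one at a time, following the four-step scheme of Propositions \ref{Proposition.2.2.1}--\ref{Proposition.2.2.8}: locate the connected subgroup $H^0$ realizing the prescribed isotropy algebra, compute $\Nn_{\hat G}(H^0)$ by Schur's Lemma together with the count of outer automorphisms, decide which finite extensions $H=H^0\cdot\Gamma$ still satisfy $\rho(H)\subset G_2$, and finally read off the kernel $\Zz(\hat G)\cap H$. I take as given the identifications of $\g$ made just before the statement, namely $\g=\so(5)$ in case (2d), $\g=\so(7)$ in case (7), and $\g=\su(4)$ or $\g_2+\R$ in case (8).

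For possibility (2d) the connected subgroup is $H^0=SU(2)_4\subset Sp(2)=Spin(5)$, which acts irreducibly on $\C^4$; by Schur's Lemma $\Zz_{Sp(2)}(SU(2)_4)=\Zz(Sp(2))=\Z_2$, and since $\su(2)$ has no outer automorphisms we get $\Nn_{Sp(2)}(SU(2)_4)=SU(2)_4\cdot\Z_2$. Because the four-dimensional irreducible representation has odd highest weight, $-I$ already lies in $SU(2)_4$, so there is no proper disconnected extension and $H=SU(2)_4$, with kernel $\Zz(\hat G)\cap H=\Z_2$. For possibility (7) the same two tools apply to $G_2\subset Spin(7)$: here $G_2$ acts irreducibly on $\R^7$ and has no outer automorphisms, so $\Zz_{Spin(7)}(G_2)=\Zz(Spin(7))=\Z_2$ and $\Nn_{Spin(7)}(G_2)=G_2\times\Z_2$. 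Both extensions are admissible, since the nontrivial central element of $Spin(7)$ projects to the identity of $SO(7)$ and hence acts trivially on $V\cong\R^7$; this same remark shows the kernel of the action equals $\Gamma$.

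Possibility (8) splits according to whether $\g=\su(4)$ or $\g=\g_2+\R$, with $H^0=SU(3)$ in either case, embedded so that the isotropy module is $V\cong\C^3\oplus\R$, i.e. as the standard $SU(3)\subset G_2$. When $\hat G=SU(4)$ the crucial observation is that the outer (conjugation) automorphism of $SU(3)$ is \emph{not} induced by any element of $SU(4)$, because $\C^3$ and $\overline{\C^3}$ are inequivalent $SU(3)$-modules; consequently $\Nn_{SU(4)}(SU(3))=S(U(3)\times U(1))$, so the only freedom is the central circle $U(1)=\Zz_{SU(4)}(SU(3))$. I would then determine which $g\in U(1)$ have $\rho(g)\in G_2$: such a $\rho(g)$ centralizes $SU(3)$ inside $G_2$, and a short computation with the defining $3$-form gives $\Zz_{G_2}(SU(3))=\Zz(SU(3))=\Z_3$; translating this back through the isotropy action exhibits the admissible elements of $U(1)$ as the cyclic group $\Zz(SU(3))\cdot\Zz(SU(4))$, which has order $12$ because $3$ and $4$ are coprime. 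Since the factor $\Zz(SU(3))$ is already contained in $SU(3)$, one is left with $H=SU(3)\cdot\Gamma$ for $\Gamma\subset\Zz(SU(4))=\Z_4$, and as the center acts trivially on $V$ the kernel is $\Gamma$.

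The last and most delicate case is $\hat G=G_2\times S^1$, where the outer automorphism of $SU(3)$ \emph{is} realized, by the element $D_7\in\Nn_{G_2}(SU(3))=SU(3)\cdot\Z_2[D_7]$ of Lemma \ref{Lemma.3.1.3}; together with the central circle this gives $\Nn_{\hat G}(SU(3))=(SU(3)\times S^1)\cdot\Z_2[D_7]$. The whole circle is central, hence acts trivially on $V$, so every finite $\Gamma\subset S^1$ is admissible and lies in the kernel. The main obstacle, and where I expect the real work, is to rule out the $D_7$-extension, i.e. to show $\rho(D_7)\notin G_2$. The argument I would give is one of orientation: $\rho(D_7)$ fixes the one-dimensional $\R$-summand of $V$ (since $D_7$ commutes with $S^1$) and acts on $\sm=\g_2/\su(3)$ by $Ad_{D_7}$. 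As $Ad_{D_7}$ is inner on $\g_2$ it has determinant $+1$ there, while on $\su(3)$ it induces the conjugation automorphism, whose fixed subalgebra is $\so(3)$ of dimension $3$, so its determinant is $(+1)^3(-1)^5=-1$. Hence $\det(Ad_{D_7}|_{\sm})=-1$ and $\det\rho(D_7)=-1$, so $\rho(D_7)\notin SO(7)\supset G_2$. This excludes the $D_7$-extension and leaves exactly $H=SU(3)\cdot\Gamma$ with $\Gamma\subset\Zz(G_2\times S^1)=S^1$ finite and kernel $\Gamma$, completing the proof.
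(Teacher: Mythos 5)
Your proof is correct and follows essentially the same route as the paper: Schur's Lemma for (2d) and (7), the normalizer $S(U(3)\times U(1))$ and the $\Z_{12}=\Zz(SU(3))\cdot\Zz(SU(4))$ computation for $SU(3)\subset SU(4)$, and exclusion of the $D_7$-extension in $G_2\times S^1$. Your determinant computation $\det\rho(D_7)=-1$ via the fixed subalgebra $\so(3)$ of the outer involution of $\su(3)$ is just the infinitesimal form of the paper's observation that $(S^6/\Z_2)\times S^1$ is non-orientable, so the two arguments coincide in substance.
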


\begin{proof} It suffices to consider  the case of disconnected Lie subgroups $H$. We have examined cases 1-6, except  (2d).
Applying  Schur's Lemma  to  possibilities (2d) and (7), we conclude that
$\Nn_{Sp (2)} (SU(2)_4)$ is $SU(2)_4$ and $\Nn_{Spin (7)} (G_2) = G_2 \cdot \Z_2$.
Applying the invariance principle  to possibility 8, we  conclude that
$\Nn_{SU(4)} (SU (3)) = S(U(3) \times U(1))$  and $\Nn_{G_2 \times S^1} (SU(3)) = \Z_2 [D_7] \times S^1$. Using the decomposition $S(U(3) \times U(1))= SU(3) \cdot \Zz_{SU(4)}(SU(3))$, we conclude  that the isotropy action of an  element
$g \in \Nn_{SU(4)} (SU (3)) = S(U(3) \times U(1))$   belongs to $G_2$, iff $g = g_1 \cdot h$ where $g_1 \in SU(3)$ and $h\in \Zz(SU(4)) = \Z_4$.  Finally  we check easily  that $Ad_{D^7}$ does not belongs to $G_2$, since $(S^6/\Z_2)\times S^1$
is not orientable. This proves Proposition \ref{Proposition.3.2.2}.
\end{proof}

\subsection{Classification theorem}

We summarize our  examination in the previous subsection in the following

\begin{theorem}
\label{Theorem.3.3.1}   Let $G/H$ be a homogeneous space admitting
a $G$-invariant $G_2$-structure.  We assume that $G$ is a connected compact Lie group  and $G$ acts effectively on $G/H$. Then $G/H$ is  one in  Theorem \ref{Theorem.2.3.1} or  one in the following list
\end{theorem}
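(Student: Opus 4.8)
The plan is to mirror Section 3 exactly. Existence of a $G$-invariant $G_2$-structure on $G/H$ is equivalent to the algebraic condition $\rho(H)\subset G_2\subset Gl(7,\R)$, so after replacing $G$ by $\hat G=G^{sc}\times T^k$ and passing to the subgroup $H'\subset\hat G$, I would first classify all pairs of Lie algebras $(\h\subset\g)$ with $\ker\bar\rho=0$, $\dim\g=\dim\h+7$, and $\bar\rho(\h)\subset\g_2$; then lift to a connected subgroup $H^0$; then determine all disconnected closed $H$ by computing $\Nn_{\hat G}(H^0)$; and finally descend to the effectively-acting quotient $G=\hat G/(\Zz(\hat G)\cap H')$, recording the kernel in each case. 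Completeness of the candidate list for $\bar\rho(\h)$ rests on Dynkin's theorem as recalled in Lemma \ref{Lemma.3.1.3}: every proper subalgebra of $\g_2$ lies in one of $\su(3)$, $\so(4)_{3,4}$, $\so(3)_7$, which together with $\g_2$ itself yields exactly the eight possibilities listed above.

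The two engines are already in hand. Proposition \ref{Proposition.3.2.1} disposes of possibilities $1$--$6$ except case (2d): since $H^0\subset SO(4)_{3,4}$, the line $\Lambda^3(\R^3)\subset\Lambda^3(\R^7)$ is $H^0$-fixed, so the invariance principle forces any $\rho(H)\subset G_2$ to preserve the splitting $\R^7=\R^3\oplus\R^4$, whence $\rho(H)\subset G_2\cap(SO(3)\times SO(4))=SO(4)_{3,4}$. Consequently these pairs are precisely those already found in Propositions \ref{Proposition.2.2.1}, \ref{Proposition.2.2.3}, \ref{Proposition.2.2.5}, \ref{Proposition.2.2.6}, \ref{Proposition.2.2.7}, \ref{Proposition.2.2.8}, and the corresponding rows of the table coincide with those of Theorem \ref{Theorem.2.3.1}. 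Proposition \ref{Proposition.3.2.2} supplies the genuinely new rows, coming from the connected subgroups $SU(2)_4\subset Sp(2)$ (case 2d), $G_2\subset Spin(7)$ (case 7), and $SU(3)\subset SU(4)$ and $SU(3)\subset G_2\times S^1$ (case 8), each pinned down by the dimension count and a normalizer computation. It then remains to run the descent from $\hat G$ to $G$ on each pair, which produces the several non-isomorphic $G$ sharing a common $G/H$ up to finite cover; grouping these by common cover is exactly what the numeration in the table records.

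I expect the real difficulty to sit inside the new-case analysis of Proposition \ref{Proposition.3.2.2}, namely deciding which components of $\Nn_{\hat G}(H^0)/H^0$ map into $G_2$. Schur's Lemma settles $\Nn_{Sp(2)}(SU(2)_4)=SU(2)_4$ and $\Nn_{Spin(7)}(G_2)=G_2\cdot\Z_2$, while for $SU(3)\subset SU(4)$ the invariance principle gives $\Nn_{SU(4)}(SU(3))=S(U(3)\times U(1))$ and one must check that such an element acts through $G_2$ only when its $U(1)$-part lies in $\Zz(SU(4))=\Z_4$. The one subtle point is the case $SU(3)\subset G_2\times S^1$, where one has to verify that $Ad_{D_7}\notin G_2$; this is forced by the non-orientability of $(S^6/\Z_2)\times S^1$. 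Once these normalizers (together with $\Zz(G_2)=\Z_1$) are established, assembling the table and reading off each effective $G$ with its kernel $\Zz(\hat G)\cap H'$ is routine bookkeeping, exactly as in Section 3.
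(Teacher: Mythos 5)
Your proposal follows the paper's own route essentially verbatim: the theorem is assembled exactly as you describe, by combining Proposition \ref{Proposition.3.2.1} (the invariance-principle reduction of possibilities 1--5, minus (2d), back to the $SO(4)_{3,4}$ cases of Theorem \ref{Theorem.2.3.1}) with the new pairs of Proposition \ref{Proposition.3.2.2}, whose normalizer computations you reproduce correctly, including the $Ad_{D_7}\notin G_2$ check. No substantive difference from the paper's argument.
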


\begin{math}
\begin{array}{ccc}
Case & G & H  \\
(2d) & SO(5) & SO(3)_5 \\
7 & Spin (7) &   G_2  \\
7 & SO (7)  & G_2  \\
8 & SU(4)  & SU(3)\\
8 & SU(4)/\Z_2 &  SU(3)\\
8 &  PSU(4) &  SU(3)\\
8 & G_2 \times S^1 &  SU(3)
\end{array}
\end{math}

\begin{remark}\label{Remark.3.3.2}  a) We  have the same formula  $d_3 = d_1 + d_2$ as in   the  case of $\tilde G_2$.
The dimension of  the space of all $G$-invariant $G_2$-structures on $G/H$ is $d_3(G/H) -1$.\\
b) Many  spaces among those listed in  Theorem \ref{Theorem.3.3.1} have been known before.
Case (2d) has been   treated by Bryant in \cite{Bryant1987} and Bryant and Salamon in \cite{BS1989}. Case (5ii) has been examined by  Cabrera, Monar and Swann
\cite{CMS1994}.  In \cite{FKMS1997} Friedrich and his coauthors classified all  simply-connected  compact homogeneous  nearly parallel $G_2$-manifolds.   
We remark that a large part of homogeneous spaces listed in Theorem \ref{Theorem.3.3.1} are quotients of spaces listed
in \cite{FKMS1997}.
\end{remark}

\section {Spaces $G/H$ with  high rigidity  or with low rigidity}

In this section we  consider several  examples  of  spaces $G/H$ with high rigidity or  low rigidity.  Many of these examples
are known, but we provide   simpler proofs   of some known results  based on our classifications. We also  present
some new results.

\subsection{Spaces $G/H$ with $d_3 (G/H) = 1$}
Let $G/H$ be one of homogeneous spaces listed in Theorem \ref{Theorem.2.3.1} or Theorem \ref{Theorem.3.3.1}. Clearly $d_3(G/H) = d_1 + d_2$   is equal 1, if and only if  $d_1 = 0$ and $d_2 = 1$, so $G/H$ is in possibility (2d) or possibility  (7).  In other words invariant  positive forms $\phi$ on these spaces are defined  uniquely  up to rescaling.
These spaces are well studied  before \cite{Bryant1987}, \cite{FKMS1997}.  They are nearly parallel $G_2$-manifolds, i.e.
\begin{equation}
d\phi = \lambda * \phi
\label{4.1.1}
\end{equation}
for some constant $\lambda \not = 0$. 
We will give a brief explanation of this fact, which is close to the argument in \cite{Bryant1987}.  It is easy to see that equation (\ref{4.1.1}) holds, because $d_3(G/H ) =1$. 
To prove $\lambda \not = 0$, we observe that $d* \phi = 0$, since   there is no $\rho(H)$-invariant
2-form on $V$. 
On the other hand, by \cite{AK1975} 
there is no invariant metric with zero Ricci curvature  on $G/H$ \cite{AK1975}. Hence $\lambda \not = 0$.

\subsection{Spaces  $G/H$ with $d_3(G/H ) = 2$}  
They are the spaces in possibilities (1),  (2ci) with a nontrivial $\Gamma$,
and in possibility (3aiii) listed in  subsection 3.1.  These spaces present an interesting  class, since
there are a 1-parameter family of inequivalent  $G$-invariant  $\tilde G_2$-structures on $G/H$,  and a one-parameter
family of inequivalent    $G$-invariant  $G_2$-structures on $G/H$.

 An example  of $ \Gamma$  in the possibility (2ci) 
is the icosahedral rotation group of order 60 which is isomorphic to  the alternating group
$A_5$. The space is a quotient of a sphere $S^7$ by  $\Gamma$.

\begin{lemma}\label{Lemma.4.2.4}   The dimension of   the space of invariant 2-forms on $G/H$  with $d_3 = 2$ is less than or equal to 1. Any  $G$-invariant 2-form on $G/H$ is closed.
\end{lemma}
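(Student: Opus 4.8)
The plan is to work entirely at the base point, identifying $G$-invariant $2$-forms on $G/H$ with $\rho(H)$-invariant elements of $\Lambda^2(V^*)$, where $V=\g/\h$ is the reductive complement fixed in subsection 3.1. Recall from the opening of this subsection that the spaces with $d_3=2$ are exactly those in possibilities (1), (2ci) with nontrivial $\Gamma$, and (3aiii); in each of them $V=\R^7$ splits under $\rho(H)$ as a sum $V=W_3\oplus W_4$ of two \emph{inequivalent} irreducible subspaces of dimensions $3$ and $4$ (this is visible from Lemmas \ref{Lemma.2.1.8} and \ref{Lemma.2.2.4}), and the fixed-point space of $V$ is trivial, which is the statement $d_1=0$. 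I would prove the dimension bound first and then closedness.

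For the bound I would invoke Schur's Lemma, exactly as it is used elsewhere in the paper. Since $W_3$ and $W_4$ are inequivalent, every invariant bilinear form on $V$ is block diagonal, so it suffices to count invariant antisymmetric forms on each summand separately. On an irreducible summand the space of invariant bilinear forms is the commutant $\mathrm{End}_{\rho(H)}$, and an invariant \emph{antisymmetric} form can occur only when this commutant is strictly larger than $\R$, i.e. when the summand is of complex or quaternionic type. The summand $W_3$ is always of real type (it carries only the invariant metric), and one checks that $W_4$ is of real type in cases (1) and (2ci) but of complex type in case (3aiii), where the commutant is $\C=\la 1,J\ra$. Hence the space of invariant $2$-forms has dimension $0$ in cases (1) and (2ci) and dimension $1$ in case (3aiii), giving $\dim\le 1$.

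For closedness, cases (1) and (2ci) are vacuous since the only invariant $2$-form is $0$. In case (3aiii) I would identify the generator of the one-dimensional space explicitly. Writing $\h=\su(2)+\R$ and letting $\xi$ span the central summand $\R$, the element $\xi$ is fixed by $\mathrm{Ad}(H)$ (it is central in $\h$, and the disconnected part of $H$ lies in $\Zz(G)$), so, using $\mathrm{ad}$-invariance of the bi-invariant metric $\la\,,\ra_\g$,
\[
\om_\xi(u,v):=\la\xi,[u,v]\ra=\la[\xi,u],v\ra=\la\bar\rho(\xi)u,v\ra,\qquad u,v\in V,
\]
is a well-defined invariant $2$-form. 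Because $\bar\rho(\xi)$ acts as the invariant complex structure $J$ on $W_4$ and by $0$ on $W_3$, the form $\om_\xi$ is nonzero and therefore spans the space of invariant $2$-forms. To see it is closed I would use the Chevalley--Eilenberg differential for invariant forms on a reductive space, which for $X,Y,Z\in V$ reads
\[
d\om_\xi(X,Y,Z)=-\om_\xi([X,Y]_V,Z)-\om_\xi([Y,Z]_V,X)-\om_\xi([Z,X]_V,Y),
\]
where $[\,\cdot\,,\cdot\,]_V$ denotes the $V$-component of the bracket. Expanding $[X,Y]=[X,Y]_\h+[X,Y]_V$, the fully bracketed terms $\la\xi,[[X,Y],Z]\ra$ cancel in the cyclic sum by the Jacobi identity, while each remaining term has the form $\la\xi,[[X,Y]_\h,Z]\ra$ with $[[X,Y]_\h,Z]\in V$; since $\xi\in\h$ is orthogonal to $V$, these vanish as well, so $d\om_\xi=0$.

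The routine part is the dimension count, which is pure Schur's Lemma applied to the splitting $V=W_3\oplus W_4$ already furnished by our earlier lemmas. The real content, and the main obstacle, is closedness in case (3aiii): the quick observation that invariant $1$-forms vanish ($d_1=0$) gives only $\delta\om=0$ (equivalently, the invariant $6$-form $d{\ast}\om$ vanishes), i.e. \emph{coclosedness}, and does not by itself yield $d\om=0$. The decisive structural point is that the invariant complex structure $J$ on $W_4$ is not merely an abstract commutant element but is realized as $\bar\rho(\xi)$ for a central $\xi\in\h$; this exhibits the unique invariant $2$-form as the canonical $\mathrm{Ad}(H)$-form $\om_\xi$ attached to a central element, whose closedness is forced by the Jacobi identity together with $\xi\perp V$. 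The only points I would verify carefully are the Chevalley--Eilenberg sign convention and the $\mathrm{Ad}(H)$-invariance of $\xi$ against the disconnected part $\Gamma\subset\Zz(G)$, but no genuine difficulty remains.
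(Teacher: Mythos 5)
Your proof is correct, but both halves take a genuinely different route from the paper's. For the dimension bound the paper does not decompose $V$ into irreducibles at all: it uses the $G_2$-module splitting $\Lambda^2(\R^7)^*=\g_2\oplus\R^7$, kills the $\R^7$ summand using $d_1=0$, and identifies the space of invariant $2$-forms with the centralizer of $\rho(H)$ in $\g_2$, which is then bounded by a short case analysis on $\h$. Your Schur-type count on $W_3\oplus W_4$ (no cross terms since the summands are inequivalent; $W_3$ of real type because it is odd-dimensional and irreducible; $W_4$ of real type except in case (3aiii), where it is complex) reaches the same conclusion using only the real Schur lemma. Yours is more self-contained and makes transparent why the answer is $0$ outside (3aiii), but it requires checking the type of $W_4$ case by case, whereas the paper localizes everything in $\Zz_{\g_2}(\rho(H))$ in one stroke. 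For closedness the two proofs exhibit the same object: the paper identifies the generator as the curvature of the $U(1)$-fibration $G/H_0\to G/(H_0\cdot U(1))$ and gets $d\om=0$ for free from the Bianchi identity, while you write it as $\om_\xi=\la\xi,[\cdot,\cdot]\ra$ for $\xi$ spanning the central $\R\subset\h$ and verify closedness directly from the Jacobi identity together with $\xi\perp V$. Your version has the merit of flagging explicitly that $d_1=0$ only yields coclosedness and that the Jacobi identity is the real input; the paper's version trades that computation for a standard Chern--Weil fact. Both arguments are complete.
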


\begin{proof} The condition $d_3 = 2$ implies that $d_1 = 0$, since $d_2 \ge  d_1 + 1$.
Now we use the following decomposition of $G_2$-modules, see e.g. \cite{Bryant2005}
$$ \Lambda ^ 2(\R ^ 7 ) ^ * =  \g_2 +  \R ^ 7 .$$
Since $\rho(H) \subset G_2$, the above decomposition is  invariant under the $\rho(H)$-action.
Since $d_1 = 0$,  the existence of  a $G$-invariant 2-form on $G/H$ is equivalent to the  existence of  a non-trivial
centralizer $c$ of $\rho (H)$ in $\g_2$. Thus   either $\h = \so (3) $, or $\h = \so (3) + \R$.
In the first case, using our classification,  we conclude that it is  case (2cii) with $\h = \su (2)_{0,4}$. Considering  the decomposition  of $\Lambda ^2 (\R^7) ^*= \Lambda ^2 (\R ^3 \oplus \R ^4)^*$ with respect to the representation of $\h =\su (2)_{0,4}$,  we conclude that there exists a vector in $\Lambda ^2 (\R ^3)^* = \R ^3\subset \R^7$, which
is invariant under the action of $\rho(H)$. This contradicts our  remark above that $d_1 = 0$.
In the second case, since $rk\, \h = 2$, we conclude that $ c$ lies in the component $\R \subset \h$. 
In fact it is case (3aiii).  This proves the first assertion of Lemma \ref{Lemma.4.2.4}, and it gives
rise to a unique (up to rescaling)  $G$-invariant  2-form $\om$ on $G/H$  as follows. We write $H = H_0 \cdot U(1)$.  Let us consider the $U(1)$-fibration 
$G/H_0 \to G/ (H_0 \cdot U(1))$ whose  fiber is $U(1)/ ( U(1) \cap H^0))$.  The form $\om$ is  the curvature of   this
non-trivial $U(1)$-fibration. 
Thus $\om$ is a representative of  a   $G$-invariant 2-form which is unique up to rescaling. Since it is closed, Lemma \ref{Lemma.4.2.4} follows  directly.
\end{proof}

\begin{theorem}\label{Theorem.4.2.5}  Let $G/H$ be a  compact homogeneous manifold 
with $d_3(G/H) =  2$. \\
a) Any $G$-invariant $\tilde G_2$-structure and any $G$-invariant $G_2$-structure on $G/H$ is coclosed.\\
b) There exists  a unique $G$-invariant nearly parallel $G_2$-structure on $G/H$.
\end{theorem}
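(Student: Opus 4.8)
Throughout I write $\Om^k$ for the space of $G$-invariant $k$-forms on $G/H$, i.e. the $\rho(H)$-invariants in $\Lambda^k V^*$; averaging over the compact group $G$ identifies $H^\bullet(\Om^\bullet,d)$ with $H^\bullet_{dR}(G/H)$, the Hodge star (which is $\rho(H)$-equivariant, regardless of signature) gives $\dim\Om^k=\dim\Om^{7-k}$, and $\dim\Om^1=d_1=0$ for every space with $d_3=2$. To prove (a) the plan is to establish the stronger fact that $d$ annihilates $\Om^4$ entirely; since $*_\phi\phi\in\Om^4$ for every invariant structure, this yields $d*_\phi\phi=0$ simultaneously for all $G_2$- and $\tilde G_2$-structures, the signature playing no role. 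First, $\Om^1=0$ and Lemma \ref{Lemma.4.2.4} (invariant $2$-forms are closed) give $b_2=\dim\Om^2$ (no invariant $2$-form is exact, as $\Om^1=0$), while dually $\dim\Om^6=\dim\Om^1=0$. Poincar\'e duality on the closed oriented $7$-manifold $G/H$ gives $b_5=b_2$, and Hodge duality gives $\dim\Om^5=\dim\Om^2$. Reading off the invariant complex $\Om^4\to\Om^5\to\Om^6=0$, where $H^5=\Om^5/d\Om^4$, I obtain
\[\dim d\Om^4=\dim\Om^5-b_5=\dim\Om^2-b_2=0,\]
so $d\Om^4=0$, which is exactly (a).

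For (b) I would first read off the intrinsic torsion. By the decomposition of \cite{Bryant2005},
\[d\phi=\tau_0\,{*_\phi}\phi+3\,\tau_1\wedge\phi+{*_\phi}\tau_3,\qquad d{*_\phi}\phi=4\,\tau_1\wedge{*_\phi}\phi+\tau_2\wedge\phi,\]
with $\tau_1\in\Om^1$, $\tau_2$ an invariant $2$-form, and $\tau_3\in(\Lambda^3_{27})^{\rho(H)}$. Here $\tau_1=0$ since $d_1=0$, and $\tau_2=0$ by (a); moreover $\dim(\Lambda^3_{27})^{\rho(H)}=d_3-1-d_1=1$. Hence $d\phi=\tau_0\,{*_\phi}\phi+{*_\phi}\tau_3$, and $\phi$ is nearly parallel exactly when the single scalar $\tau_3$ vanishes — in which case $\tau_0\neq0$ automatically, since $\tau_0=\tau_3=0$ would make $\phi$ torsion-free and $g_\phi$ Ricci-flat, excluded by \cite{AK1975}. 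Because the Hitchin dual $\Theta(\phi):={*_\phi}\phi$ is homogeneous of degree $4/3$ whereas $d\phi$ is linear in $\phi$, the equation $\tau_3=0$ is scale-invariant, so I may pass to the projectivised interval $\mathbb P$ of invariant definite $3$-forms and regard $\tau_3$ as a real-valued function of a single shape parameter $t$.

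It then remains to show $\tau_3(\phi_t)$ has a unique zero. For existence I would either note that each space (or a finite cover) appears in the Friedrich--Kath--Moroianu--Semmelmann list \cite{FKMS1997}, which already carries an invariant nearly-parallel form, or argue directly that $\tau_3$ reverses sign as $\phi_t$ approaches the two ends of $\mathbb P$, where $g_\phi$ collapses the $3$- or the $4$-dimensional summand of $V$, and invoke the intermediate value theorem. Uniqueness is where I expect the real difficulty. When $\operatorname{rank}(d|_{\Om^3})=1$ (i.e. $b_3=1$) it is automatic: $d$ then maps $\Om^3$ onto a line $\ell\subset\Om^4$, and since $\Theta$ sends rays to rays bijectively it descends to an injection on projectivisations, so at most one $[\phi]$ can satisfy $[{*_\phi}\phi]=[\ell]$. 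For the spaces at hand, however, $b_3=0$, so $\operatorname{rank}(d|_{\Om^3})=2$; both $d$ and $\Theta$ then induce self-maps of $\mathbb{RP}^1$ and I must prove the induced map has a single fixed point. This is the main obstacle, and I expect it to be unavoidably computational: from the explicit brackets of $(\h\subset\g)$ in each of the families (1), (2ci) and (3aiii) one has to verify that $\tau_3(\phi_t)$ vanishes for exactly one $t$ — equivalently that the invariant Ricci tensor is proportional to $g_{\phi_t}$ for a unique shape — which rules out the \emph{a priori} possibility, familiar from the squashed sphere, of a second invariant Einstein (hence candidate nearly-parallel) metric.
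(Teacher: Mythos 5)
For part (a) your argument is correct but runs along a different track from the paper's. The paper integrates by parts: for an invariant stable $4$-form $\psi$ the pairing of $d\psi$ with any invariant $2$-form $\om$ equals, up to sign, the pairing of $\psi$ with $d\om$, which vanishes by Lemma \ref{Lemma.4.2.4}; since the wedge pairing between invariant $5$-forms and invariant $2$-forms is perfect, $d\psi=0$. You instead count dimensions in the invariant complex, $\dim d\Om^4=\dim\Om^5-b_5=\dim\Om^2-b_2=0$, using Poincar\'e and Hodge duality together with the same two inputs ($\Om^1=0$ and Lemma \ref{Lemma.4.2.4}). Both are sound; yours has the small advantages of sidestepping any question about the signature of $*_\psi$ and of showing in one stroke that every invariant $4$-form is closed.

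For part (b) there is a genuine gap. The reduction is fine ($\tau_1=0$ from $d_1=0$, $\tau_2=0$ from part (a), a single remaining scalar $\tau_3$), but you then defer both halves of the statement. Existence via \cite{FKMS1997} covers only the simply connected members of the list, and descending the nearly parallel form from the universal cover to $S^7/\Gamma$ or to the other quotients is precisely the kind of invariance statement the theorem is meant to supply; the sign-change argument at the two degenerate ends of the interval of shapes is not carried out; and uniqueness is reduced to a fixed-point count on $\R P^1$ which you yourself call ``unavoidably computational'' and do not perform for any of the three families (1), (2ci), (3aiii). As written, the proposal proves (a) and a reformulation of (b), not (b).

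You should also confront how the paper closes this gap, because it collides head-on with one of your assertions. The paper claims that the restriction to $V$ of the Cartan form $\Om^3(X,Y,Z)=\langle X,[Y,Z]\rangle$ is a nonzero \emph{closed} invariant $3$-form, so that $\dim d(\Om^3)=1$; then all the $4$-forms $d\phi$ lie on a single line, and both existence and uniqueness reduce to checking that this line is spanned by a stable $4$-form, which is $*_{\phi_0}\phi_0$ for a unique ray of definite $\phi_0$ (with $\lambda\neq0$ by \cite{AK1975}). Your computation $b_3=0\Rightarrow\operatorname{rank}(d|_{\Om^3})=2$ asserts the opposite, and the two cannot both hold: in the invariant complex of $G/H$ one has $d(\Om^3|_V)(X_0,\dots,X_3)=-\sum_{i<j}(-1)^{i+j}\langle[X_i,X_j]_{\h},[X_k,X_l]_{\h}\rangle$, the Chern--Weil $4$-form of the canonical connection, whose vanishing is not automatic, while your Betti-number argument is hard to fault for $S^7/\Gamma$. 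Whichever way this is resolved, your proof is incomplete: either the line argument is available and uniqueness is immediate, or it is not and the case-by-case computation you postponed must actually be done.
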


\begin{proof}
a)  It suffices to prove that  $d\psi ^4= 0$,  for any $G$-invariant stable 4-form $\psi$.  We will show that
the pairing of $*_\psi d\psi ^4 $ with any $G$-invariant 2-form $\om$ is zero.   This pairing is equal   to  the pairing
of $\psi$ and $d\om$. By Lemma \ref{Lemma.4.2.4} this pairing is zero.

b) The existence and uniqueness of a $G$-invariant nearly parallel $G_2$-structure on  these spaces follows from  
 a   computation  of the rank of a 4-form  $d\phi$, here $\phi$ is a $G$-invariant  3-form   on $G/H$, combining with the  following observation.
 For all $G$-invariant 3-forms $\phi$, all  4-forms $d\phi$  are in the same conformal class.
To  prove this, we  use the  assertion that the dimension $d^1$ of the space  of all 4-forms $d\phi$  is  equal to 1, where $\phi$ is a G-invariant 3-form  on $G/H$.  To see it, we note that $d^1$ is  less than or equal to
2. On the other hand, since the restriction of the Cartan form $\Om ^3$  to $V$:
$$\Om ^ 3(X, Y, Z) = \langle X, [ Y, Z] \rangle$$
is not zero on our  spaces $G/H$  with $d_3  = 2$, and using $d\Om^3 = 0$, we conclude that $d^1 \le 1$.
A simple  computation shows that  $d^1 \not = 0$. Hence $d^ 1 = 1$.
Consequently, for all $G$-invariant 3-forms $\phi$, all  4-forms $d\phi$  are in the same conformal class. 
\end{proof}

\begin{remark}\label{Remark.4.2.6} The existence of   $G$-invariant nearly parallel $G_2$-structures on    spaces $G/H$ with $d_ 3 = 2$
and $\pi_1 (G/H) = 0$ has been  established in \cite{FKMS1997} by a different method. In \cite{Hitchin2001}  Hitchin  suggests  a variational method to find  nearly parallel $G_2$-structures.
\end{remark}

\begin{example} \label{Example.4.2.9} We   consider case (1), see a detailed description in subsection 3.1.
Using a  method in \cite{Bryant1987} and \cite{Hitchin2000} we   explain how to find  all
$G$-invariant  $\tilde G_2$-forms and  $G$-invariant $G_2$-forms  on $G/H$. Recall that $V = W + W^\perp$.  Take an orthogonal basis $(e_1, e_2, e_3)$   in $W ^\perp$.
 We  choose another quaternion  basis $e_4, e _5, e _6, e _7 \in W $   with respect to the action of $\h$.  Let
 $e^i$  be  the dual basis  in $V ^*$.
 Then the 3-form $\om^{123}$ and   the 3-form $\phi$ defined in Definition \ref{Definition.3.1.1} are generators
 of our space of $\rho (H)$-invariant  3-forms on $V$. 
 The space of  $\rho(H)$-invariant 4-forms on $V$ is generated by
 $$\psi_1 = \om ^{4567}, \, \psi _2 =  \om ^{4567} + \om^{2367} + \om ^{2345} + \om ^{1357}- \om^{1346} - \om ^{2356} -\om^{1247}.$$
 Any $\rho(H)$-invariant 4-form $\psi(a,b)$ on $V$   is of the form
$a\psi_1 +  b \psi_2$.   We define  the associated 2-bilinear  form $g_{\psi(a,b)}$ on $ V ^* \otimes V^* $  with value in
$[\Lambda ^7 (V ^*)]^2$ by setting \cite[8.4]{Hitchin2001}
$$g_{\psi(a,b)}(X^*, Y^*) = (X^* \wedge \psi(a,b))\wedge (Y^* \wedge \psi (a,b))\wedge \psi(a,b).$$
Since  this is an invariant metric, and $d _2 = 2$, we calculate easily
$$ g_{\psi (a,b)} = (a^2 (2a+ 3b) [( e_1) ^2 + (e_2)^2 + ( e _3) ^2] + 3a^3[ (e_4 ) ^2 + ( e_5) ^2 + ( e _6) ^2 +  (e _7)) ^2](\om^{1234567})^2.$$
Hence  $vol\, (\psi(a,b)) = (a) ^{3/2} (2a + 3b) ^{1/4} (3) ^{1/3}\om ^{1234567}$. Thus $\psi (a,b)$ is a stable 4-form, if and only if
$a(2a +3b) \not = 0$.  If $(2a +3b)a > 0$ then  $*_{\phi(a,b)}\phi(a,b)$ is a $G_2$-form, if  $(2a+3b) a < 0$ then
$*_{\phi (a,b)}\phi(a,b)$ is a $\tilde G_2$-form.  
\end{example}

\subsection{Spaces with $d_3 (G/H) =  35$}  It is easy to see that $d_3 (G/H) \le  35$, and   the equality is
attained, if and only if $H$ is trivial.  On $G = T^7$  any  $G$-invariant  $G_2$-structure (or $\tilde G_2$-structure)
is torsion-free.  Now let us look at the  next non-trivial case  with $ G = SU(2) \times T^ 4$  or
$G= SO(3) \times T^4$.

\begin{proposition} \label{extra} Let $G$ be $SU(2) \times T^4$ or $SO(3) \times T^4$.\\
(i)  There is no $G$-invariant nearly  $G_2$-structure  on $G$.\\
(ii) There is no  $G$-invariant closed  stable 3-form  on $G$.\\
(ii) The dimension of the space of coclosed $\tilde G_2$-forms  as well as the dimension  of the space of coclosed $G_2$-forms on $G$
is 19.
\end{proposition}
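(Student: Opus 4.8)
\emph{Proof strategy.} The plan is to represent every $G$-invariant form on $G$ by an element of $\Lambda^\bullet\g^*$ with the Chevalley--Eilenberg differential, where $\g=\s\oplus\t$ with $\s=\su(2)$ and $\t=\R^4$ (both groups share this Lie algebra). I grade $\Lambda^\bullet\g^*$ by bidegree $(p,q)$, the $(\s,\t)$-degree. The structure equations read $de^i\in\Lambda^2\s^*$ for $i\le 3$ and $de^k=0$ for $4\le k\le 7$; since $d\colon\s^*\to\Lambda^2\s^*$ is an isomorphism while $d$ annihilates $\Lambda^2\s^*$, $\Lambda^3\s^*$ and $\t^*$, the differential on $\Lambda^p\s^*\otimes\Lambda^q\t^*$ vanishes unless $p=1$, in which case it is injective into bidegree $(2,q)$. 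Three consequences drive everything: (a) $d\phi$ has pure bidegree $(2,2)$ for every invariant $3$-form $\phi$; (b) an invariant $3$-form is closed iff its $(1,2)$-component vanishes; (c) an invariant $4$-form is closed iff its $(1,3)$-component vanishes.

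For (i) I would argue by curvature. A nearly parallel $G_2$-structure has weak holonomy $G_2$, hence its metric is Einstein with $\mathrm{Ric}=\tfrac38\lambda^2 g>0$ \cite{FKMS1997}. This metric is $G$-invariant, hence complete, so by Bonnet--Myers $G$ would have finite fundamental group. But $\pi_1(SU(2)\times T^4)=\Z^4$ and $\pi_1(SO(3)\times T^4)=\Z_2\times\Z^4$ are infinite, a contradiction. (Equivalently, by (a) the $4$-form $\lambda^{-1}d\phi=*_\phi\phi$ would be of pure bidegree $(2,2)$, whose associated Hitchin metric is isotropic on the $3$-plane $\s^*$ and so cannot be positive definite.)

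For (ii) let $\phi=\alpha+\beta+\gamma$ be closed, with $\alpha\in\Lambda^3\t^*$, $\beta\in\Lambda^2\s^*\wedge\t^*$, $\gamma\in\Lambda^3\s^*$ (no $(1,2)$-part, by (b)). I use the Hitchin symmetric form $B_\phi$ defined by $B_\phi(v,w)\,\mathrm{vol}=\iota_v\phi\wedge\iota_w\phi\wedge\phi$, which is nondegenerate exactly when $\phi$ is stable \cite{Hitchin2000}. For $v,w\in\t$ one has $\iota_v\phi\in\Lambda^2\t^*\oplus\Lambda^2\s^*$ (no $(1,1)$-part), so by bidegree bookkeeping the only term reaching top degree is $(\iota_v\alpha\wedge\iota_w\alpha)\wedge\gamma$. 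Now $\alpha$ is a $3$-form on the $4$-dimensional $\t$, hence decomposable, say $\alpha=\theta_1\wedge\theta_2\wedge\theta_3$; then $\iota_v\alpha,\iota_w\alpha\in\Lambda^2\langle\theta_1,\theta_2,\theta_3\rangle$ and their product lies in $\Lambda^4\langle\theta_1,\theta_2,\theta_3\rangle=0$. Hence $B_\phi|_{\t\times\t}=0$, i.e. the $4$-plane $\t$ is totally isotropic. Since a nondegenerate form on $\R^7$ admits isotropic subspaces of dimension at most $3$, $B_\phi$ is degenerate and $\phi$ is not stable.

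For (iii) I would pass to $4$-forms. Since $\phi\mapsto*_\phi\phi$ maps the open $G_2$- (resp. $\tilde G_2$-) orbit of stable $3$-forms diffeomorphically onto the corresponding orbit of stable $4$-forms, and $\phi$ is coclosed iff $*_\phi\phi$ is closed, the space of coclosed $G_2$-forms (resp. $\tilde G_2$-forms) is an open subset of the linear space $Z^4$ of closed invariant $4$-forms. By (c), $Z^4$ is spanned by the $d$-closed bidegree components, and summing their dimensions yields $\dim Z^4$; the stated dimension then follows once one verifies that each open subset is nonempty. This last verification is the main obstacle: one must exhibit a closed stable $4$-form of \emph{each} type. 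A coassociative form adapted to the splitting $\s\oplus\t$ furnishes a $G_2$-type example (all of its bidegree components are $d$-closed), and a signature-changing deformation inside $Z^4$ produces a $\tilde G_2$-type example; both orbits being open, each coclosed space has dimension $\dim Z^4$.
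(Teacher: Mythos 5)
Your part (i) is essentially the paper's own argument: a nearly parallel $G_2$-structure forces a $G$-invariant Einstein metric of positive Ricci curvature, hence a finite fundamental group, contradicting $\pi_1(G)\supseteq \Z^4$. Nothing to add there.

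Part (ii) is correct but takes a genuinely different route. The paper brings a putative closed stable $3$-form into the shape $\theta\wedge\gamma_1+\gamma_2$ and then quotes the Bure\v s--Van\v zura reformulation of Westwick's rank classification of trivectors to rule out stability. You instead note that closedness kills the $\s^*\otimes\Lambda^2\t^*$ component, that for $v,w\in\t$ the only bidegree combination reaching $(3,4)$ in $\iota_v\phi\wedge\iota_w\phi\wedge\phi$ is $\iota_v\alpha\wedge\iota_w\alpha\wedge\gamma$ (the cross term $(0,2)+(2,0)$ would need the missing $(1,2)$-part of $\phi$, and $(2,0)+(2,0)$ dies in $\Lambda^4\s^*=0$), and that this term vanishes because every $3$-form on the $4$-dimensional $\t$ is decomposable. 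Hence the $4$-plane $\t$ is totally isotropic for Hitchin's form $B_\phi$, which is impossible for a nondegenerate symmetric bilinear form on $\R^7$. This is self-contained, avoids the external rank classification, and the bookkeeping checks out; it is arguably cleaner than the paper's argument.

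Part (iii) has a genuine gap: you set up the right framework (coclosed stable forms of either type correspond under $\phi\mapsto *_\phi\phi$ to a nonempty open subset of the space $Z^4$ of closed invariant $4$-forms, hence have dimension $\dim Z^4$), but you never carry out the count, writing only that ``the stated dimension then follows.'' It does not. In your own bidegree decomposition the closed part of $\Lambda^4\g^*$ is the sum of the $(3,1)$, $(2,2)$ and $(0,4)$ components (the differential is injective on $(1,3)$ and zero elsewhere), of total dimension $4+18+1=23$; equivalently $\dim Z^4=\dim d(\Om^3_G)+b_4(G)=18+5=23$. This disagrees with the asserted value $19$. The discrepancy traces to the line $\dim\ker(d|_{\Om^2_G})=2+\dim d(\Om^1_G)=5$ in the paper's computation, which implicitly uses $b_2(G)=2$, whereas $b_2(S^3\times T^4)=\binom{4}{2}=6$; with the correct value one gets $\dim\ker(d|_{\Om^2_G})=9$, $\dim d(\Om^2_G)=12$, $\dim\ker(d|_{\Om^3_G})=17$, $\dim d(\Om^3_G)=18$. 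So either you must locate an error in the bidegree count above, or the constant in the statement needs correcting; in either case a proof cannot end with ``the stated dimension then follows.'' Separately, your nonemptiness argument for the $\tilde G_2$ case (``a signature-changing deformation inside $Z^4$'') is too vague: it is cleaner to exhibit the explicit indefinite form $\phi_-=-\om^{123}+\om^{145}+\om^{167}+\om^{246}-\om^{257}-\om^{347}-\om^{356}$ and observe directly that $*_{\phi_-}\phi_-$ has components only in bidegrees $(0,4)$ and $(2,2)$, hence is closed.
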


\begin{proof} (i) The existence of a nearly  $G_2$-structure implies that the associated metric is 
Einstein.  By a theorem due to Alexeevskii and Kimeldeld \cite{AK1975},  the  Ricci curvature of   the associated  metric is positive, which implies that  the  fundamental group of $G$ is finite. So we obtain a contradiction.

(ii) Let us choose $e_1,e_2,e_3 \in  \su (2)$  such that
$[e_1,e_2] = e_2, [e_1,e_3] = -e_3 $ and $[e_2,e_3] = e_1$. Let  $e_4,e_5,e_6, e_7 \in  lT^4$, the Lie algebra of $T^4$. Suppose that  there exists a closed stable 3-form $\phi$.  Assume that $\phi = c\om_{123} + \phi_0$, where
$\phi_0  ( e^{123}) = 0$. Then $\phi _0 = d\psi ^2 +  V_1\rfloor \om ^{4567}$, where $\psi ^2$  is a $G$-invariant 2-form on $G$  and
$V_1\in lT^4$.
Since $d (e^1 \wedge e^2 ) = 0 = d( e^2 \wedge  e^3) = d ( e^3 \wedge e^1)$, we can assume that $\psi ^2 $ has the form $ a_1 e^1 \wedge f^1 + a_2 e^2 \wedge f^2 + a_3 e ^3 \wedge f^3$ where $f^i \in  (lT^4) ^*$.
Thus  $\phi= c\om ^{123} + a_1 e^2 \wedge e^3 \wedge f^1 + a_2 e^1 \wedge e^2\wedge f^2 + a_3 e^3 \wedge e^1 \wedge f^3 + V_1\rfloor \om ^{4567}$. Let $V_2 \in lT^4\setminus \{0\}$ such that $f^i (V_2) = 0$ for $i = \overline{1,3}$. If $V_1 = bV_2$,   then $rk (\phi) \le 6$, so $\phi$ is not stable.  If $V_1$ and $V_2$ are linearly independent, let us consider  a space $\R^6 \subset \g$ containing $(e_1,e_2, e_3, V_1)$ which is   a complement to $V_2$. Let $\theta$ be the 1-form on $\g$ such that $\theta (V_2) = 1$ and $\theta _{|\R^6} = 0$.
Then  $\phi =   \theta \wedge \gamma_1 + \gamma _2$, where $\gamma _1\in \Lambda ^2(\R^6)^* $ has length 1  and $\gamma_2 \in \Lambda ^3 (\R^6) ^*$.  By \cite[Lemma 2]{BV2003}, which reformulates  results in \cite{Westwick1981}, $\phi$  is not stable.

(iii)  We set
$$\phi _{\pm} = \pm \om^{123} + \om ^{145} +\om^{167} + \om ^{246} - \om^{257} - \om^{347} -\om^{356}.$$
A simple calculation shows that $*_\phi\phi_{\pm} =  \pm \om ^{4567} +d\psi ^3$, where $\psi ^3  = d( \om^{167} +
\om^{145} -\om^{357} + \om ^{346} - \om ^{256} -\om^{247})$, hence $*_\phi\phi_{\pm}$ is a closed form.
Clearly the dimension of  the family  of  coclosed stable forms at $\phi_{\pm}$  is equal  to  $\dim ( d  \Om_G ^ 3 ) +5 (=b_4(G))$.
Now we compute
$$\dim (d ( \Om ^ 3 _ G ) ) = 35 -\dim \ker d_{| \Om ^ 3_G} , $$
$$\dim (\ker  d _{| \Om ^ 3 _G } ) =  5 + \dim ( d ( \Om ^2 _G)), $$
$$\dim ( d ( \Om ^ 2_G)) =  21 - \dim \ker d _{| \Om ^ 2 _G}, $$
$$\dim \ker ( d _{|\Om ^ 2 _G }) = 2 + \dim  ( d ( \Om ^1 _G)) =  5.$$
Thus the dimension of the space of stable 3-forms at $\phi_{\pm}$ is  19.  Applying this argument to other
stable  invariant  3-forms  we complete the proof of Proposition \ref{extra}.
\end{proof}

\begin{remark}
All the spaces considered above admit stable closed 4-forms. Using a method in \cite{Hitchin2001}   we can construct metrics with $Spin (7)$-holonomy or metric
with $Spin (4,3)$-holonomy on  the product  of these spaces  with an interval.  Hitchin considered only  $Spin(7)$-holonomy, but     his arguments are applied to the case of $\tilde G_2$-structure and $Spin (3,4)$-holonomy. 
\end{remark}

{\bf Acknowledgement.} 
We thank Sasha Elashivili, Ines Kath and  Ji\v ri Van\v zura for helpful remarks. 
We thank the anonymous referees  for   helpful  suggestions  and corrections, which  improve our note greatly. H.V.L. is supported in part by Grant of ASCR IAA100190701.
A part of this  paper has been done while H.V.L. was visiting the ASSMS, GCU Lahore-Pakistan,  and the University of Toulouse. She thanks  these institutions  for their hospitality
and financial support.  M.M. is supported in part by HEC of Pakistan.

{\it Note added in proof}. We thank Ines Kath for showing us  a paper by F. Reidegeld \cite{Reidegeld2010}, which partially overlaps with our results.

\bigskip

{\small  Address: Mathematical Institute of ASCR,
Zitna 25, CZ-11567 Praha 1, email: hvle@math.cas.cz\\
and  ASSMS, Government College University, Lahore-Pakistan, email:mobeenmunir@gmail.com}


\begin{thebibliography}{99999}
\bibitem{AK1975} {\sc D. V. Alekseevskii and B. N. Kimelfeld}: Structure of homogeneous Riemannian spaces with zero  Ricci curvature, Funktional Anal. i Prilozen 9 (1975), N 2, 5-11.
\bibitem{BV2003}{\sc  J.  Bure\v s and  J.  Van\v zura}, Multisymplectic formes  of degree
three in dimension seven,  Suppl. Rend. Circ. Mat. Palemo, Serie II, Numero 71 (2003), 73-91.
\bibitem{Bryant1987}{\sc R. Bryant}: Metric with exceptional holonomy, Ann. of Math. 126(1987), 525-576.
\bibitem{Bryant2005} {\sc R. Bryant}: Some remarks on $G_2$-structures, Proc. of Goekova Geometry-Topology Conference 2005, Goekova Geometry-Topology , Goekova 2006.
\bibitem{BS1989} {\sc R. Bryant  and S. Salamon}: On the construction of some complete  metrics with exceptional holonomy,  Duke Math Journal,  58 (1989), N3, 829-850.
\bibitem{CF2006}{\sc S. Chiossi  and A. Fino}:  Conformally parallel  $G_2$-structure  on a class of  solvmanifolds, Math. Z. 252 (2006), 825-848.
\bibitem{CMS1994}{\sc F.M. Cabrera, M.D. Monar, A. Swann}: Classification of $G_2$-Structures, J. of London Math. Soc. (2) 53 (1996),n.2, 407-416.
\bibitem{CS2002}{\sc R. Cleyton and  A. Swann}: Cohomogeneity one $G_2$-structures,  Journal of Geometry and
Physics, 44,(2002), 202-220.
\bibitem{DS2009} {\sc S. Donaldson and E. Segal}: Gauge theory in higher dimension, II, arxiv:0902.3239.
\bibitem{Dynkin1952}{\sc E. B. Dynkin}: The maximal subgroups of classical groups, Tr. Mosk. Mat. Obshch. 1 (1952),
39-166, translated in  Amer. Math. Soc. Transl. (2) 6,
(1957), 245-378.
\bibitem{FKMS1997}{\sc Th. Friedrich, I. Kath, A. Moroianu, U. Semmelmann}: On nearly parallel $G_2$-structures,
J. of Geometry and Physics, 23 (1997), 259-286.
\bibitem{GMW2004}{\sc J. Gauntlett,  D. Martelli, S. Pakis}: Superstrings  with intrinsic torsion, Phys. Rev. D69, 086002, (2004).
\bibitem{HL1982}{\sc R. Harvey and H.B. Lawson}: Calibrated geometries, Acta Math.,148, (1982), 47-157.
\bibitem{Helgason1978}{\sc S. Helgason}: Differential Geometry, Lie groups Symmetric spaces, Academy Press, (1978). 
\bibitem{Hitchin2000}{\sc N. Hitchin}: The geometry of three forms in six and seven dimensions, J. Diff. Geom. 55(2000),547-576.
\bibitem{Hitchin2001}{\sc N. Hitchin}: Stable form and special metrics, Contemp. Math. 288(2001), 70-89.
\bibitem{II2005}{\sc P. Ivanov  and S. Ivanov}:  $SU(3)$-Instantont and $G_2$, $Spin (7)$-Hetoric String Solitons,  Comm. Math. Phys. 259, 79-102 (2005).
\bibitem{KS1991}{\sc M. Kreck and S. Stolz}:  Some non-diffeomorphic homeomorphic homogeneous  7-manifolds
with positive sectional curvature, J.D.G. 33(1991), 465-486.
\bibitem{Kath1998} {\sc I. Kath}: $G_{2(2)}^*$-structures on pseudo-Riemannian manifolds, J. Geometry and Physics 27 (1998),155-177.
\bibitem{Le2006a}{\sc H. V. Le}: The existence of closed 3-forms of $\tilde G_2$-type on 7-manifolds, arXiv:math/0603182.
\bibitem{Le2006b}{\sc H. V. Le}: Manifolds admitting a $\tilde G_2$-structure, arXiv:0704.0503v1 (the first version).
\bibitem{LPV2008}{\sc H.V. Le, M. Panak, J. Vanzura}:  Manifolds admitting stable forms, CMUC 49 (2008), N1, 101-117.  
\bibitem{Montgomery1950}{\sc D. Montgomery}:  Simply connected  homogeneous spaces, Proc. of AMS, vol.1, N4, (1950), 467-469.
\bibitem{Onishchik2004} {\sc A. L. Onhishchik}: Lectures on Real Semisimple Lie algebras and Their
Representations,  EMS, 2004.
\bibitem{Reichel1907}{\sc W. Reichel}: \"Uber trilineare alternierende  Formen  in sechs und sieben Ver\"anderlichen und die durch sie
definierten geometrischen  Gebilde,  Dissertation  Greiswald, (1907).
\bibitem{Reidegeld2010} {\sc F. Reidegeld}, Spaces admitting homogeneous $G_2$-structures, D.G.A. 28(2010), 301-312.
\bibitem{Salamon1989} {\sc  S. M. Salamon}, Riemannian  geometry and holonomy groups, Pitman Research  Notes in Mathematics, vol. 201, Longman, Harlow, 1989.
\bibitem{VO1988} {\sc E. B. Vinberg and A.L. Onhishchik}: Lie groups and algebraic groups, Moscow, Nauka (1988),
English translation in
Springer Series in Soviet Mathematics, Springer-Verlag, Berlin, 1990.
\bibitem{Westwick1981}{\sc R. Westwick}:  Real trivectors of rank seven, Linear and multilinear algebra,
 10, (1981), 183-204

\end{thebibliography}
\end{document}